\newtheorem{theorem}{Theorem}[section]
\newtheorem{proposition}[theorem]{Proposition}
\theoremstyle{definition}
\newtheorem{definition}[theorem]{Definition}
\theoremstyle{remark}
\newtheorem{remark}[theorem]{Remark}
\numberwithin{equation}{section}
\renewenvironment {proof} {\begin{trivlist} \item[\hspace{\labelsep}%
\sc Proof.]}{$\Box$ \end{trivlist}}
   \def\cN{{\mathcal N}}
\renewcommand {\phi}{\varphi}          %ôè
\newcommand {\supp}{\mathop {\rm supp}}     %íîñèòåëü
\begin{document}

\title[Multidimensional moment problem  and diagonal Schur algorithm]
{Multidimensional moment problem  and diagonal Schur algorithm}

\author[I. Kovalyov]{Ivan Kovalyov}
\address{Universität Osnabrück\\
Albrechtstr. 28a  \\
49076 Osnabrück \\
Germany.}
\email{i.m.kovalyov@gmail.com}

\thanks{I. Kovalyov also gratefully acknowledges financial support by the German Research Foundation (DFG, grant 520952250).}

\subjclass[2010]{Primary 30E05 ; Secondary 44A60; 30B70 ; 14P99}
\keywords{Continued fractions, multidimensional moment problem, Schur algorithm}

\begin{abstract}
The multidimensional moment problem is studied in terms of the Steiltjes transform. The diagonal step-by-step algorithm is constructed for the  multidimensional moment problem. The set of solutions of the full multidimensional moment problem is found in terms of the continued fractions.  Moreover, the diagonal step-by-step algorithm  can be applied to the special truncated multidimensional moment problem.

\end{abstract}

\maketitle

\tableofcontents

\section{Introduction}

A  Stieltjes  moment problem was studied in~\cite{Akh,St89}.
Given a sequence of  real numbers $\textbf{s}=\{s_{i}\}_{i=0}^{\infty}$, find a
positive Borel measure $\sigma$ with a support on $\mathbb{R}_{+}$, such
that
\begin{equation}\label{int1}
    \int_{\mathbb{R}_{+}}t^jd\sigma(t)=s_{j},\quad j\in\mathbb{Z}_{+}=\mathbb{N}\cup\{0\}.
\end{equation}
The problem \eqref{int1} with a finite  sequence
 $\mathbf{s}=\{s_{i}\}_{i=0}^{\ell}$
is called a truncated Stieltjes  moment problem, otherwise it is called a full Stieltjes  moment problem.

By the Hamburger--Nevanlinna theorem~\cite{Akh} the truncated Stieltjes  moment problem can be reformulated in term
of the Stieltjes transform
\begin{equation}\label{int3}
    f(z)=\int_{\mathbb{R}_{+}}\frac{d\sigma(t)}{t-z}\qquad z\in \mathbb{C}\backslash \mathbb{R}_{+}
\end{equation}
of $\sigma$ as the following interpolation problem at $\infty$
\begin{equation}\label{14.int.th_1}
	 f(z)=-\frac{s_{0}}{z}-\frac{s_{1}}{z^2}-\cdots-\frac{s_{\ell}}{z^{\ell+1}}
    +o\left(\frac{1}{z^{\ell+1}}\right),\quad\quad z\widehat{\rightarrow}\infty,
\end{equation}
where the notation $z\widehat{\rightarrow}\infty$ means that $z\rightarrow\infty$  nontangentially, that is inside the
sector $\varepsilon<\arg z<\pi-\varepsilon$ for some
$\varepsilon>0$.

\cite{D97} Recall that the set $\mathcal{N}(\textbf{s})=\{n_{j}\}_{j=1}^{N}$ of
normal indices of $\textbf{s}=\{s_j\}_{j=0}^{\ell}$ is defined by
\begin{equation}\label{3p.2.4}
\mathcal{N}(\textbf{s})=\{n_{j}: D_{n_{j}}\neq0, j=1,2,\ldots,
N\},\quad D_{n_{j}}:=\textup{det}({s}_{i+k})_{i,k=0}^{n_{j}-1}.
\end{equation}
Let us set $D_{n}^{+}:=\textup{det}(s_{i+j+1})_{i,j=0}^{n-1}$. By
the Sylvester identity (see~\cite[Proposition~3.1]{DK15} or
\cite[Lemma~5.1]{D97} for detail), the set $\mathcal{N}(\textbf{s})$
is the union of two  subsets
\begin{equation}\label{eq:3pN=mu+nu}
    \mathcal{N}(\textbf{s})= \{\nu_{j}\}_{j=1}^{N_{1}}\cup \{\mu_{j}\}_{j=1}^{N_{2}},
\end{equation}
which are selected by
 \begin{equation}\label{eq:3p.nu}
    D_{\nu_{j}}\neq0\quad\mbox{and}\quad
    D_{\nu_{j}-1}^{+}\neq0,\quad\mbox{ for all
    }j=\overline{1,N_{1}}
\end{equation}
and
\begin{equation}\label{eq:3p.mu}
     D_{\mu_{j}}\neq0\quad\mbox{and}\quad
    D_{\mu_{j}}^{+}\neq0,\quad\mbox{ for all
    }j=\overline{1,N_{2}}.
\end{equation}
Moreover, the normal indices
$\nu_{j}$ and  $\mu_{j}$ satisfy the following inequalities
\begin{equation}\label{eq:3p.nu<mu}
     0<\nu_{1}\leq\mu_{1}<\nu_{2}\leq\mu_{2}<\ldots
\end{equation}

Let us formulate a \textbf{moment problem $MP(\mathbf{s},\ell)$}:
Given a sequence of real numbers $\mathbf{s}=\{s_{j}\}_{j=0}^{\ell}$. Find the set of function $F$, which admit the  asymptotic expansion \eqref{14.int.th_1}.

Recall the result describing the basic steps of the Schur algorithm for the one-dimensional moment problem $MP(\mathbf{s},\ell)$.
\begin{theorem}\label{14p.int.th_ind} (\cite{Der03}) Let $\mathbf{s}=\{s_{j}\}_{j=0}^{\ell}$ be a sequence of real numbers and assume that $\ell>2n_1-1$, where $n_1$ is the first normal index of the sequence $\mathbf{s}$. If  $f$ admits the asymptotic expansion \eqref{14.int.th_1}, 
then $f$ cam be rewritten as
\begin{equation}\label{14.int.th_2}
	f(z)=-\cfrac{b_0}{a_0(z)+f_1(z)},
\end{equation}
where 
\begin{equation}\label{14.int.th_3}
	b_0=s_{n_1-1}, \, a_0(z)=\cfrac{1}{D_{n_1}}\begin{vmatrix}
s_{0}& s_{1} &\ldots& s_{n_1}\\
\vdots& \ddots &\ddots &\vdots\\
s_{n_1-1}& \ldots   &\ldots & s_{2n_1-1}\\
1&z&\ldots& z^{n_1}
\end{vmatrix}
	\,\mbox{and}\, f_1(z)=-\sum_{j=0}^{\ell-2n_1}\cfrac{s_j^{(1)}}{z^j},
\end{equation}
the recursive sequence $\mathbf{s}^{(1)}=\left\{s_j^{(1)}\right\}_{j=0}^{\ell-2}$ is defined by
 \begin{equation}\label{14.int.th_4}
s_{j}^{(1)}=\cfrac{(-1)^{j+n_1}}{s_0^{j+n_1+2}}\begin{vmatrix}
s_{n_1}& s_{n_1-1} &0&\ldots& 0\\
\vdots& \ddots &\ddots&\ddots & \vdots\\
\vdots&  &\ddots&\ddots &0\\
\vdots&  & &\ddots & s_{n_1-1}\\
s_{j+2n_1}& \ldots &\ldots&\ldots&s_{n_1}
\end{vmatrix},\quad j=\overline{1,\ell-2n_1}.
\end{equation}

In \cite{DK15, DK17, K17}, $MP(\mathbf{s},\ell)$ are studied in the generalized Stieltjes classes, any solutions of $MP(\mathbf{s},\ell)$ can be represented in terms of $S$-fractions. First of all, we recall the results of the odd problem:

\end{theorem}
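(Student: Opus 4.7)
The plan is to treat the claimed identity \eqref{14.int.th_2} as a defining relation, compute everything from the asymptotic expansion, and then identify the coefficients via Cramer's rule.

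First, I would extract what the normal-index hypothesis gives us about the early moments. Since $n_1$ is the \emph{first} normal index, $D_0 = \cdots = D_{n_1-1} = 0$ while $D_{n_1}\neq 0$. A standard induction on Hankel minors (using $D_1 = s_0$, $D_2 = s_0 s_2 - s_1^2$, etc.) forces $s_0 = s_1 = \cdots = s_{n_1-2} = 0$ and $s_{n_1-1}\neq 0$, so that $b_0 = s_{n_1-1}\neq 0$ and \eqref{14.int.th_1} simplifies to
\begin{equation*}
  f(z) = -\frac{b_0}{z^{n_1}} - \frac{s_{n_1}}{z^{n_1+1}} - \cdots - \frac{s_\ell}{z^{\ell+1}} + o\!\left(\frac{1}{z^{\ell+1}}\right), \qquad z\widehat{\rightarrow}\infty.
\end{equation*}
In particular $-1/f(z)$ is well defined nontangentially near $\infty$ and admits a formal Laurent expansion there.

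Next, I would \emph{define} $a_0(z)$ and $f_1(z)$ from the division $-b_0/f(z)$: write $-b_0/f(z) = a_0(z) + f_1(z)$ where $a_0$ is the polynomial part of the expansion (which has degree exactly $n_1$, since the leading term $-b_0/z^{n_1}$ contributes $z^{n_1}$) and $f_1 = O(1/z)$ collects the tail. By construction \eqref{14.int.th_2} holds, and the hypothesis $\ell>2n_1-1$ ensures that the expansion is known to enough orders to determine the first $\ell - 2n_1 + 1$ coefficients of $f_1$. Writing $a_0(z) = \sum_{k=0}^{n_1} \alpha_k z^k$ and $f_1(z) = -\sum_{j\ge 0} s_j^{(1)}/z^{j+1}$ (or whatever indexing convention is forced by the claim), the identity $f(z)(a_0(z)+f_1(z)) = -b_0$ becomes, after equating coefficients of $z^{n_1-k}$ for $k = 0,1,\dots,n_1$, the linear system
\begin{equation*}
  \sum_{j=0}^{n_1} s_{i+j}\,\alpha_j = \delta_{i,n_1-1} b_0, \qquad i=0,1,\dots,n_1-1,
\end{equation*}
plus the normalization $\alpha_{n_1} = 1/s_{n_1-1}$ coming from matching the leading $z^{n_1}$ (after using $s_0 = \cdots = s_{n_1-2}=0$). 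This is a Hankel system of size $n_1$ with matrix $(s_{i+j})_{i,j=0}^{n_1-1}$, whose determinant is $D_{n_1}\neq 0$.

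Applying Cramer's rule then expresses each $\alpha_k$ as a ratio of $n_1\times n_1$ Hankel-type determinants over $D_{n_1}$; assembling them into $\sum_k \alpha_k z^k$ by cofactor expansion along the last row identifies $a_0(z)$ with the determinantal expression in \eqref{14.int.th_3}. Continuing the coefficient comparison for powers $z^{-1}, z^{-2}, \dots, z^{-(\ell-2n_1+1)}$ gives a recursive triangular system for the $s_j^{(1)}$, which can again be solved explicitly by Cramer's rule. Arranging the resulting determinants and simplifying using the structure of the Hankel matrix (with the zero entries above the antidiagonal coming from $s_0 = \cdots = s_{n_1-2} = 0$) yields the closed form in \eqref{14.int.th_4}.

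The main obstacle is the last matching step: turning the ``bare'' output of Cramer's rule into the precise determinantal shape stated in \eqref{14.int.th_4}, in particular the lower-triangular pattern of $s_{n_1-1}$'s and the sign $(-1)^{j+n_1}$ with the power of the leading coefficient in the denominator. This is really a combinatorial computation with Hankel minors; the cleanest route is to use the Sylvester identity (as invoked in \eqref{eq:3pN=mu+nu}--\eqref{eq:3p.mu}) to reduce the generic $n_1\times n_1$ Cramer determinants to the sparser form displayed in \eqref{14.int.th_4}. The analytic part (existence of the expansion of $f_1$ up to order $o(1/z^{\ell-2n_1+1})$) is then a formality following from $\ell > 2n_1 - 1$.
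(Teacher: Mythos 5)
First, a point of calibration: the paper does not prove Theorem \ref{14p.int.th_ind} at all --- it is recalled, statement only, from \cite{Der03} --- so your attempt can only be measured against the standard proof of this Schur-algorithm step, which is indeed the route you sketch: use the fact that $n_1$ is the first normal index to force $s_0=\dots=s_{n_1-2}=0$ and $s_{n_1-1}\neq0$, define $a_0+f_1$ as the polynomial part plus remainder of $-b_0/f$, and identify the coefficients from the resulting Hankel relations. Your first step (the induction on the vanishing Hankel determinants) is correct, and the decomposition by division is the right idea.

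Two of your concrete steps, however, would fail as written. First, the linear system you pose for the coefficients $\alpha_j$ of $a_0$ is not the one the identity produces, and its solution is not the polynomial in \eqref{14.int.th_3}. From $-b_0/f-a_0=O(1/z)$ and $f=O(z^{-n_1})$ one gets $f\,a_0+b_0=O(z^{-n_1-1})$; since $f\,a_0=O(1)$, the powers $z^{n_1},\dots,z^{1}$ you propose to match carry no information, and the nontrivial equations come from $z^{0},z^{-1},\dots,z^{-n_1}$: the $z^{0}$-equation $\sum_{j}\alpha_j s_{j-1}=b_0$ reduces, via $s_0=\dots=s_{n_1-2}=0$ and $b_0=s_{n_1-1}$, to the monic normalization $\alpha_{n_1}=1$ (consistent with \eqref{14.int.th_3}, whose $z^{n_1}$-cofactor is $D_{n_1}$), while the remaining ones are the homogeneous orthogonality relations $\sum_{j=0}^{n_1}s_{i+j}\alpha_j=0$, $i=0,\dots,n_1-1$. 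Your system, with right-hand side $\delta_{i,n_1-1}b_0$ and normalization $\alpha_{n_1}=1/s_{n_1-1}$, has a different unique solution: already for $n_1=1$ it yields $z/s_0+1-s_1/s_0^{2}$ instead of $z-s_1/s_0$, so Cramer's rule applied to it cannot reproduce \eqref{14.int.th_3}. Second, the identification of the $s_j^{(1)}$ with \eqref{14.int.th_4} --- the actual computational content of the theorem --- is deferred to an unspecified combinatorial argument; Sylvester's identity is not the natural tool here. Since $s_0=\dots=s_{n_1-2}=0$, one has $-b_0/f(z)=s_{n_1-1}z^{n_1}\bigl(s_{n_1-1}+s_{n_1}z^{-1}+s_{n_1+1}z^{-2}+\cdots\bigr)^{-1}$, and the coefficients of a reciprocal power series are given by the classical banded lower-Hessenberg determinants, which is exactly the shape displayed in \eqref{14.int.th_4}; the tail of that expansion below degree zero is $f_1$, and $\ell>2n_1-1$ guarantees that the first $\ell-2n_1+1$ of its coefficients are determined by the data. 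Finally, be aware that the statement as printed contains transcription slips (the denominator $s_0^{\,j+n_1+2}$ vanishes whenever $n_1>1$, and $f_1$ should be $O(1/z)$, i.e.\ the sum should run over $z^{-j-1}$), so any verification must target the corrected formulas of \cite{Der03} rather than the literal display.
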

 \begin{theorem}\label{2p.pr.alg1}
Let ${\mathbf
s}=\{s_i\}_{i=0}^{2\nu_N-2}$ be the  sequence of real numbers  and  let
$\cN({\mathbf s})=\{\nu_j\}_{j=1}^N\cup\{\mu_j\}_{j=1}^{N-1}$ be the set of normal indices of the sequence ${\mathbf
s}$. Then any solution of the moment problem $MP(\mathbf{s}, 2\nu_N-2)$
 admits the following representation 
 \begin{equation}\label{5p.eq:3.6*9}
    f(z)= \frac{1}{\displaystyle -z m_{1}(z)+\frac{1}{\displaystyle
    l_{1}(z)+\frac{1}{\displaystyle -zm_{2}(z)+\cdots+\frac{1}{-zm_{N}(z)+\frac{\displaystyle 1}{\displaystyle\tau(z)}}
    }}},
\end{equation}
 where the parameter $\tau$ satisfies the following
 \begin{equation}\label{eq:tau_j}
    \frac{\displaystyle1}{\displaystyle\tau(z)}=o(z),\quad
    z\widehat{\rightarrow}\infty.
\end{equation}
and atoms $(m_j, l_j)$ can be calculated by 
  \begin{equation}\label{eqimnm1:}
        m_j(z)=\frac{(-1)^{\nu+1}}{D^{(2j-1)}_\nu}
        \begin{vmatrix}
            0 & \ldots & 0 &  \mathfrak s^{(2j-1)}_{\nu-1} &  \mathfrak s^{(2j-1)}_{\nu} \\
            \vdots &  & \ldots & \ldots & \vdots \\
            \mathfrak s^{(2j-1)}_{\nu-1} &\mathfrak s^{(2j-1)}_{\nu} & \ldots & \ldots & \mathfrak s^{(2j-1)}_{2\nu_{}-2} \\
            1 & z & \ldots& z^{\nu-2}& z^{\nu-1} \\
        \end{vmatrix},
\end{equation}
\begin{equation}\label{eq:l_j0xxx1}
l_j(z)
   = \left\{
   \begin{array}{cl}
       \frac{1}{{\mathfrak{s}}_{-1}^{(2j)}}=(-1)^{\nu
    +1} \mathfrak s_{\nu-1}^{(2j-1)}\frac{D_{\nu}^{(2j-1)}}{D_{\nu}^{(2j-1)+}},&       \mbox{ if }\nu_j=\mu_j;\\
       \frac{1}{{ \mathfrak{s}}^{(2j)}_{\mu-1}{ D}^{(2j)}_{\mu}}
    \begin{vmatrix}
    { \mathfrak{s}}^{(2j)}_{0} & \ldots & { \mathfrak{s}}^{(2j)}_{\mu-1} & { \mathfrak{s}}^{(2j)}_{\mu} \\
    \cdots & \cdots & \cdots & \cdots \\
    { \mathfrak{s}}^{(2j)}_{\mu-1} & \ldots & { \mathfrak{s}}^{(2j)}_{2\mu-2} & { \mathfrak{s}}^{(2j)}_{2\mu-1} \\
    1 & \ldots &z^{\mu-1} &z^{\mu} \\
    \end{vmatrix},& \mbox{if}\quad \nu_j<\mu_j .\\
    \end{array}
    \right.,
\end{equation}
 where the recurrence sequence $\{{s}^{(2j)}_{i}\}_{i=-1}^{2(\nu_N-\nu_j)-2}$ and
$\{s^{(2j-1)}_{i}\}_{i=0}^{2(\nu_N-\mu_j)-2}$ are found by  
\[
    T(m_{\nu_j-1}^{(j)},\ldots,m_0^{(j)},-{\mathfrak s}^{(2j)}_{-1},\ldots,-
 {\mathfrak s}^{(2j)}_{\ell_j-2\nu_j}) \, T(\mathfrak s^{(2j-1)}_{\nu_j-1},\dots,\mathfrak s^{(2j-1)}_{\ell_{j}}) =
 I_{\ell_j-\nu_1+2},\quad \ell_j=\ell-2\mu_{j-1},
\]
\[
     T(l_{\mu_j-\nu_j}^{(j)},\ldots,l_0^{(j)},-{\mathfrak s}^{(2j+1)}_{0},\ldots,-
 {\mathfrak s}^{(2j+1)}_{\ell-2\mu_j}) \, T({\mathfrak s}^{(2j)}_{\mu_j-\nu_j-1},\dots,{\mathfrak {s}}^{(2j)}_{\ell_j-2\nu_j}) = I_{\ell-\mu_j-\nu_j+2},
\]
$\nu=\nu_j-\mu_{j-1}$, $\mu=\mu_{j}-\nu_j$  and $\mathfrak s_i^{ (1)}=\mathfrak s_i$.
 \end{theorem}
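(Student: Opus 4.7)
The plan is to proceed by induction on $N$, using Theorem~\ref{14p.int.th_ind} as the one-step reduction.

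First, I would recast Theorem~\ref{14p.int.th_ind}: if $f$ admits expansion~\eqref{14.int.th_1} with length $\ell > 2n_1 - 1$, then $f$ has the form $-b_0/(a_0(z) + f_1(z))$, where $a_0$ is a polynomial of degree $n_1$ and $f_1$ carries the shorter expansion determined by the recursive sequence $\mathbf{s}^{(1)}$ of~\eqref{14.int.th_4}. For Theorem~\ref{2p.pr.alg1}, I would apply this reduction twice per inductive step in order to extract one ``layer'' $(m_j,l_j)$ of the continued fraction: once at the normal index $\nu_j$, identifying $-z m_j(z)$ with the resulting $a_0$-polynomial (with the constant $-b_0$ absorbed into the normalization of $m_j$), and once at $\mu_j$, identifying $l_j(z)$ with the second $a_0$-polynomial. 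When $\nu_j = \mu_j$ the $\mu$-step is absent and $l_j$ collapses to the constant given by the first branch of~\eqref{eq:l_j0xxx1}; in the non-degenerate case $\nu_j < \mu_j$ the second branch yields a polynomial of degree $\mu_j - \nu_j$.

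After peeling off $(m_1,l_1)$, the residual function satisfies a moment problem whose data is the shifted sequence $\mathbf{s}^{(3)}$, of length $2(\nu_N - \mu_1) - 2$, with normal indices $\{\nu_j - \mu_1\}_{j \geq 2} \cup \{\mu_j - \mu_1\}_{j \geq 2}$. The inductive hypothesis then produces the tail $1/(-z m_2(z) + \cdots + 1/\tau(z))$ of~\eqref{5p.eq:3.6*9}, with the parameter $\tau$ absorbing the final degree of freedom under the constraint~\eqref{eq:tau_j}. The base case $N=1$ amounts to Theorem~\ref{14p.int.th_ind} rewritten as $f(z) = 1/(-z m_1(z) + 1/\tau(z))$, where $\tau$ plays the role of $f_1$ subject to~\eqref{eq:tau_j}.

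The main obstacle will be verifying the explicit determinantal formulas. Formula~\eqref{14.int.th_3} expresses $a_0(z)$ as a single Hankel-type determinant in $\mathbf{s}$, whereas~\eqref{eqimnm1:} and~\eqref{eq:l_j0xxx1} express $m_j$ and $l_j$ via the iterated sequences $\mathbf{s}^{(2j-1)}$ and $\mathbf{s}^{(2j)}$. Reconciling these requires the two Toeplitz-inverse identities displayed after~\eqref{eq:l_j0xxx1}, which encode how the next shifted sequence is recovered from its predecessor after multiplication by the local atom; these in turn are derivable from Sylvester's determinant identity along the lines of~\cite[Proposition~3.1]{DK15}, with separate bookkeeping for the degenerate case $\nu_j = \mu_j$. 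Once these are in place, the converse direction, that any $\tau$ satisfying~\eqref{eq:tau_j} produces via~\eqref{5p.eq:3.6*9} a solution of $MP(\mathbf{s}, 2\nu_N - 2)$, follows by unfolding the continued fraction and reading off the first $2\nu_N - 1$ asymptotic coefficients.
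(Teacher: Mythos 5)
The paper itself does not prove Theorem~\ref{2p.pr.alg1}: it is recalled verbatim from \cite{DK15,DK17,K17}, so the relevant comparison is with the Schur-algorithm proofs given there. Your plan is in the same general spirit (peel off one layer per induction step), but as written it has concrete gaps. First, Theorem~\ref{14p.int.th_ind} cannot play the role of both elementary steps. It applies only to a function with an expansion of the form \eqref{14.int.th_1} (vanishing at infinity, no constant term) and only under the hypothesis $\ell>2n_1-1$. After you extract $-zm_j$, the intermediate function $1/(l_j+f_{j+1})$ tends to the nonzero constant $1/l_j$ when $\nu_j=\mu_j$, so it admits no expansion of type \eqref{14.int.th_1} and the ``second application'' of Theorem~\ref{14p.int.th_ind} is not legitimate; the even step is a genuinely different transformation (note that the sequence $\mathfrak s^{(2j)}$ starts at the index $-1$), and it is exactly there that the dichotomy in \eqref{eq:l_j0xxx1} is produced. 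Moreover, identifying $-zm_j$ with $a_0$ ``with $-b_0$ absorbed'' ignores that $a_0$ in general has a nonzero constant term, and that $f_1$ in \eqref{14.int.th_3} also carries a constant term $-s_0^{(1)}$; recombining these constants is precisely what creates $1/l_j$, i.e.\ the passage from the P-fraction step of Theorem~\ref{14p.int.th_ind} to the S-fraction \eqref{5p.eq:3.6*9} is not a mere renormalization, and your bookkeeping does not account for it.

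Second, the hypothesis $\ell>2n_1-1$ fails exactly where your induction starts and ends: for the odd problem the data available at the last stage has length $\ell_N=2(\nu_N-\mu_{N-1})-2$, which equals $2n_1-2$ for the corresponding shifted sequence, so Theorem~\ref{14p.int.th_ind} does not apply at $j=N$, nor at your base case $N=1$. The appearance of the free parameter $\tau$ subject only to \eqref{eq:tau_j} requires a separate truncated-step argument (this is the content of \cite{K17}), not a rewriting of Theorem~\ref{14p.int.th_ind}. Third, the assertion that the residual function solves the moment problem for $\mathbf s^{(2j+1)}$ with normal indices $\{\nu_i-\mu_j\}\cup\{\mu_i-\mu_j\}$, $i>j$, is itself one of the key lemmas of the cited works (behaviour of normal indices and of the Hankel determinants under the Schur step) and does not follow from Theorem~\ref{14p.int.th_ind}; similarly, matching the determinant formulas \eqref{eqimnm1:}--\eqref{eq:l_j0xxx1} with the triangular-Toeplitz identities, and the converse direction that every admissible $\tau$ yields a solution (in the cited papers this goes through the Stieltjes polynomials and the resolvent matrix, cf.\ Proposition~\ref{20p.prop3.7}), are stated as things to be checked rather than proved. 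So the proposal is a reasonable outline of the known strategy, but the two central mechanisms --- the even step and the terminal truncated step --- are missing, and with them the proof.
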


Next, we consider the even case:

\begin{theorem}\label{14p.int.th_ind**}  Let ${\mathbf
s}=\{s_i\}_{i=0}^{2\mu_N-1}$ be the  sequence of real numbers  and  let
$\cN({\mathbf s})=\{\nu_j\}_{j=1}^N\cup\{\mu_j\}_{j=1}^{N}$ be the set of normal indices of the sequence ${\mathbf
s}$. Then any solution of the moment problem $MP(\mathbf{s}, 2\mu_N-1)$
 admits the following representation 
 \begin{equation}\label{5p.eq:f1}
    f(z)= \frac{1}{\displaystyle -z m_{1}(z)+\frac{1}{\displaystyle
    l_{1}(z)+\cdots+\frac{1}{-zm_{N}(z)+\displaystyle\frac{1}{\displaystyle
    l_{N}(z)+\tau(z)}    }}},
\end{equation}
where the parameter $\tau$ satisfies 
 \begin{equation}\label{eq:tau_j009}
    \tau(z)=o(1),\quad  z\widehat{\rightarrow}\infty,
\end{equation}
the atoms $(m_j, l_j)$ can befound by \eqref{eqimnm1:}--\eqref{eq:l_j0xxx1}.
\end{theorem}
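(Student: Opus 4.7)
The plan is to derive the even-case continued fraction from the odd-case Theorem~\ref{2p.pr.alg1} by performing one additional Schur-type extraction that consumes the extra moments $s_{2\nu_N-1},\ldots,s_{2\mu_N-1}$. First I would view $f$ as admitting the asymptotic expansion \eqref{14.int.th_1} for the truncated data $\{s_i\}_{i=0}^{2\nu_N-2}$, whose normal-index set is $\{\nu_j\}_{j=1}^N\cup\{\mu_j\}_{j=1}^{N-1}$. Theorem~\ref{2p.pr.alg1} then furnishes the representation
\begin{equation*}
    f(z)=\cfrac{1}{-zm_1(z)+\cfrac{1}{l_1(z)+\cdots+\cfrac{1}{-zm_N(z)+\cfrac{1}{\widetilde{\tau}(z)}}}},
\end{equation*}
where the atoms $(m_j,l_j)_{j=1}^{N-1}$ together with $m_N$ are given by the determinantal formulas \eqref{eqimnm1:}--\eqref{eq:l_j0xxx1}, and the tail satisfies $1/\widetilde{\tau}(z)=o(z)$ as $z\widehat\to\infty$.

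Next, I would translate the remaining $2(\mu_N-\nu_N)+1$ moments of $\mathbf{s}$ into information about $\widetilde{\tau}$. Inverting the continued fraction layer by layer reproduces the Schur-type recurrence built into Theorem~\ref{2p.pr.alg1}; carrying the recurrence one further step yields an auxiliary sequence $\{\mathfrak{s}_i^{(2N)}\}_{i=-1}^{2(\mu_N-\nu_N)-1}$, whose entries govern the asymptotic expansion of $\widetilde{\tau}$ at infinity. Applying the basic Schur step Theorem~\ref{14p.int.th_ind} to this expansion extracts a polynomial $l_N(z)$ of degree $\mu_N-\nu_N$ given by the appropriate branch of \eqref{eq:l_j0xxx1} evaluated at $j=N$, and leaves a remainder $\tau$ with $\tau(z)=o(1)$ as $z\widehat\to\infty$. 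Substituting $\widetilde{\tau}=l_N+\tau$ into the above continued fraction produces the claimed representation \eqref{5p.eq:f1}.

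The main obstacle will be the Sylvester-type bookkeeping in this last step: verifying that the recurrence defining $\mathfrak{s}^{(2N)}$ is well posed up to the newly available index $2(\mu_N-\nu_N)-1$, and that the determinantal formula produced by the Schur step matches, on the nose, the expression \eqref{eq:l_j0xxx1} for $l_N$, including the degenerate case $\nu_N=\mu_N$ in which $l_N$ reduces to a constant. These belong to the same class of determinantal identities that already underlie Theorem~\ref{2p.pr.alg1}, so no fundamentally new tool should be required, only careful tracking of indices and signs one layer further.
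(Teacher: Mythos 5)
The paper does not prove Theorem~\ref{14p.int.th_ind**} at all: it is recalled as background, the result being taken from the Schur-algorithm machinery of \cite{DK15,DK17,K17}, so there is no internal proof to compare your argument against. Your plan is nevertheless the natural one and matches how those references proceed: any solution of $MP(\mathbf{s},2\mu_N-1)$ is in particular a solution of the truncated odd problem for $\{s_i\}_{i=0}^{2\nu_N-2}$ (this restriction is legitimate, and it would be worth saying explicitly that the normal indices $\le\nu_N$ are computed from Hankel determinants involving only the truncated moments, so the truncated sequence indeed has normal index set $\{\nu_j\}_{j=1}^N\cup\{\mu_j\}_{j=1}^{N-1}$), Theorem~\ref{2p.pr.alg1} gives the odd continued fraction with tail $1/\widetilde\tau(z)=o(z)$, and the surplus $2(\mu_N-\nu_N)+1$ moments, pushed through the linear-fractional (Stieltjes-polynomial) inverse of the continued fraction, give $\widetilde\tau$ an expansion governed by the derived sequence $\mathfrak s^{(2N)}$, from which one peels off the polynomial part $l_N$ of degree $\mu_N-\nu_N$ and is left with $\tau=o(1)$, i.e.\ \eqref{5p.eq:f1}. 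Two caveats. First, the final extraction is the additive ($l$-type) step, namely $\widetilde\tau(z)=l_N(z)+o(1)$ with $l_N$ given by \eqref{eq:l_j0xxx1}; it is not literally an application of Theorem~\ref{14p.int.th_ind}, which performs the reciprocal ($m$-type) step $f=-b_0/(a_0+f_1)$, so you need the companion $l$-step lemma of \cite{DK17,K17} (or a short direct argument that subtracting the polynomial part of the expansion leaves an $o(1)$ remainder, using $D^{(2N)}_{\mu}\ne0$ coming from $\mu_N\in\mathcal N(\mathbf s)$). Second, the assertion that the extra interpolation conditions on $f$ translate into an expansion of $\widetilde\tau$ of the required length is precisely the Toeplitz/Sylvester bookkeeping you defer; in the cited papers this is the substantive part of the induction, so your proposal is a faithful and correctly structured plan rather than a complete proof, but it contains no wrong step and needs no tool beyond what \cite{DK17,K17} already provide.
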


\begin{definition} \label{def:3p.4.3}(\cite{DK17})
Let ${\mathbf s}=\{s_i\}_{i=0}^\ell$ and let $\mathcal{N}({\mathbf s})=\{n_j\}_{j=1}^N$.
A sequence ${\mathbf s}$ is called  regular,
if one of the following equivalent conditions  holds:
\begin{enumerate}
  \item [(1)] $P_{{n}_{j}}(0)\neq0$ for every $j\le N$;
  \item [(2)] $D_{{n}_{j}-1}^{+}\neq0$ for every $j\le N$;
  \item [(3)] $D_{{n}_{j}}^{+}\neq0$ for every $j\le N$;
  \item [(4)] $\nu_{j }=\mu_{j}$   for all $j$,  such that $\nu_j,\mu_j\in\mathcal{N}({\mathbf s})$.
\end{enumerate}
\end{definition}
\begin{remark} If the sequence ${\mathbf s}=\{s_i\}_{i=0}^\ell$ is the regular (we mean that $\ell=2n_N-2$ or $\ell=2n_N-1$,  $n_N\in \mathcal{N}({\mathbf s})$), then all $l_j=$const in 
\eqref{5p.eq:3.6*9}  (\eqref{5p.eq:f1}).
\end{remark}

\subsection{Multidimensional moment problem} Let $\mu$ is a nonnegative Borel measure on $\mathbb{R}_{+}^n$, where $\supp(\mu)=A\subseteq \mathbb{R}^n$. The moment sequence 
 $\mathbf{s}=\{s_{i,j}\}_{i,j=0}^\ell$  is defined by
\begin{equation}\label{14.2.1}
s_{\alpha_1,...,\alpha_n}=\int_A t_1^{\alpha_1}\times\ldots\times  t_n^{\alpha_n}d\mu(t_1,...,t_n).
\end{equation}

However, we can reformulate the moment problem \eqref{14.2.1}  in terms of the generalized moments. Define a linear functional $\mathfrak{S}$ by
\begin{equation}\label{14.2.3S}
	\mathfrak{S}(t_1^{\alpha_1}\times\ldots\times  t_n^{\alpha_n})=s_{\alpha_1,...,\alpha_n}
\end{equation}

In the present paper,  we study a multidimensional problem in a similar way as the one--dimensional problem \eqref{14.int.th_1}, i.e. we mean to investigate it as the interpolation problem. Recall  a $n-$variate Stieltjes transform (see \cite{Cuyt2004})
\begin{equation}\label{20.int. st tr.1}
\int_{\mathbb{R}_+^n}\cfrac{d\mu(t_1,t_2,..., t_n)}{1+ \sum\limits_{i=1}^n t_i\tilde{z}_i}.
\end{equation}

Setting $\tilde z_i=-\cfrac{1}{z_i}$, we  obtain  the  associated function $F$ of the multidimensional problem  in  the following form
\begin{equation}\label{20.2.2}
 F(z_1,z_2,...,z_n)=-\frac{1}{\prod\limits_{i=1}^n z_i}\int_{\mathbb{R}_+^n}\cfrac{d\mu(t_1,t_2,..., t_n)}{1- \sum\limits_{i=1}^n \cfrac{t_i}{z_i}},
\quad \mbox{where}\, z_i\widehat{\rightarrow}\infty.
\end{equation}

Moreover, \eqref{20.2.2} can be rewritten  in terms of the Stieltjes transform as
\begin{equation}\label{20.2.3}\begin{split}
    F(z_1,z_2,...,z_n)&=-\frac{1}{\prod\limits_{i=1}^n z_i}\int_{\mathbb{R}_+^n}\cfrac{d\mu(t_1,..., t_n)}{1- \sum\limits_{i=1}^n \cfrac{t_i}{z_i}}=
    \\&=-\frac{1}{\prod\limits_{i=1}^n z_i} \int_{\mathbb{R}_+^n}\sum_{k=0}^\infty\left( \sum\limits_{i=1}^n \cfrac{t_i}{z_i}\right)^k d\mu(t_1,..., t_n)=\\&=
    -\int_{\mathbb{R}_+^n} \sum_{k=0}^\infty \sum_{\begin{matrix} 
  \alpha_i^k\geq0  \\
  \alpha_1^k+\ldots \alpha_n^k=k \\
   \end{matrix} } \binom{k}{\alpha_1^k, \ldots, \alpha_n^k}\cfrac{\prod\limits_{j=1}^n t_j^{\alpha_j^k}d\mu(t_1,..., t_n)}{\prod\limits_{j=1}^n z_j^{\alpha_j^k+1}}=\\&=
    -\sum_{k=0}^\infty \sum_{\begin{matrix} 
  \alpha_i^k\geq0  \\
  \alpha_1^k+\ldots \alpha_n^k=k \\
   \end{matrix} }\binom{k}{\alpha_1^k, \ldots, \alpha_n^k}\cfrac{s_{\alpha_1^k,\alpha_2^k,\ldots, \alpha_n^k}}{\prod\limits_{j=1}^n z_j^{\alpha_j^k+1}}.
\end{split}
\end{equation}

 \textbf{ The multidimensional  problem} $\bold{MP}(\mathbf{s},\ell)$ can be formulated as:
 
 Given a sequence of real numbers $\mathbf{s}=\{s_{i_1,i_2,...,i_n}\}_{i_1,...,i_n=0}^\ell$. Describe the set of function $F$, such that
\begin{equation}\label{20.2.3wq}\begin{split}
 	 F(z_1,z_2,...,z_n)&=  -\sum_{k=0}^{\ell} \sum_{\begin{matrix} 
  \alpha_i^k\geq0  \\
  \alpha_1^k+\ldots \alpha_n^k=k \\
   \end{matrix} }\binom{k}{\alpha_1^k, \ldots, \alpha_n^k}\cfrac{s_{\alpha_1^k,\alpha_2^k,\ldots, \alpha_n^k}}{z_1^{\alpha_1^k+1}z_2^{\alpha_2^k+1}\ldots z_n^{\alpha_n^k+1}}+\\&
   +o\!\left( \!\!\sum_{\begin{matrix} 
  \alpha_i^{\ell}\geq0  \\
  \alpha_1^{\ell}\!+\!\ldots\!+\! \alpha_n^{\ell}\!=\!{\ell} \\
   \end{matrix}}\cfrac{1}{z_1^{\alpha_1^{\ell}+1}\ldots z_n^{\alpha_n^{\ell}+1}}\right).
\end{split} \end{equation}
 
 If $\ell$ is finite, the $\bold{MP}(\mathbf{s},\ell)$ is called a truncated problem, otherwise one is called a full problem.

 In Section 2, we discuss  a simple case, when  all moments not belonging to the main diagonal of the Hankel matrix are equal to zero. The obtained results are basic for the general case, we are going to use one for the next sections. In section 3, we study the case, when  all moments not belonging to a fixed subdiagonal diagonal of the Hankel matrix are equal to zero. Finally, in Section 4, we consider the full problem and describe the set of solutions. The description of solutions of the full problem is based on the results of the previous sections. 
 
%%%%%%%%%%%%%%%%%%%%%%%%%%%%%%%%%%%%%%%
\section{Diagonal type}
In this section, we study a simple  case of the multidimensional problem:

Let  $\mathbf{s}=\{s_{i_1,i_2,...,i_n}\}_{i_1,...,i_n=0}^\ell$ be a sequence of real numbers, such that  
\begin{equation}\label{20.3.1}
 s_{i_1,i_2,...,i_n}=0\quad\mbox{for}\quad i_j\neq i_k.
\end{equation}
Then the associated function is denoted by $F_{d_0}$ and one takes the following form
\begin{equation}\label{20.3.2}
	F_{d_0}(z_1,...,z_n)=-\cfrac{s_{0,...,0}}{\prod\limits_{i=1}^n z_i}-\cfrac{\binom{n}{1,...,1}s_{1,...,1}}{\prod\limits_{i=1}^n z_i^{2}}-\ldots-
	\cfrac{\binom{n\times \ell}{\ell,...,\ell}s_{\ell,...,\ell}}{\prod\limits_{i=1}^n z_i^{\ell+1}}+o\left(\cfrac{1}{\prod\limits_{i=1}^n z_i^{\ell+1}}\right).
\end{equation}
Setting 
\begin{equation}\label{20.3.3}
	\mathfrak s_j^{(d_0)}=\binom{n\times j}{j,...,j}s_{j,...,j}\quad\mbox{and}\quad z=\prod\limits_{i=1}^n z_i,
\end{equation}
the representation  \eqref{20.3.2} can be rewritten  as
\begin{equation}\label{20.3.4}
	F_{d_0}(z)=-\cfrac{\mathfrak s^{(d_0)}_0}{z}-\cfrac{\mathfrak s^{(d_0)}_1}{z^2}-\ldots-\cfrac{\mathfrak s^{(d_0)}_{\ell}}{z^{\ell+1}}+o\left(
	\cfrac{1}{z^{\ell+1}}\right).
\end{equation}
Moreover, $\mathfrak s^{(d_0)}=\left\{\mathfrak s^{(d_0)}_i\right\}_{i=0}^{\ell}$ is called  $d_0$-associated sequence of $\mathbf{s}$. Hence, we obtain one-dimensional problem.

 \textbf{$d_0$-diagonal truncated problem} $\bold{MP}(\mathbf{s},\ell)$ is formulated as:

Given a sequence of real numbers  $\mathbf{s}=\{s_{i_1,i_2,...,i_n}\}_{i_1,...,i_n=0}^\ell$, such that \eqref{20.3.1} holds. Describe the set of functions $F_{d_0}$, such that have the asymptotic expansion \eqref{20.3.2}.

\begin{proposition}\label{20.th3.1} Let the sequence of real numbers  $\mathbf{s}=\{s_{i_1,i_2,...,i_n}\}_{i_1,...,i_n=0}^{2n^{(d_0)}_N-1}$ satisfy \eqref{20.3.1}. Let  $\mathfrak s^{(d_0)}=\{\mathfrak s^{(d_0)}_i\}_{i=0}^{2n_j-1}$ be  $d_0$-associated sequence of $\mathbf{s}$ and   let $\mathcal{N}({\mathfrak s^{(d_0)}})=\{n^{(d_0)}_j\}_{j=1}^N$ be the set of normal indices  of $\mathfrak s^{(d_0)}$. Then any solution of $d_0$-diagonal truncated problem $\bold{MP}(\mathbf{s},2n^{(d_0)}_N-1)$ admits the following representation
\begin{equation}\label{20.3.5}
	F_{d_0}(z_1,...z_n)=-\cfrac{b^{(d_0)}_0}{a^{(d_0)}_{0}(z_1,...,z_n)-\cfrac{b_1^{(d_0)}}{a_1^{(d_0)}(z_1,...,z_n)-\ldots -\cfrac{b_{N-1}^{(d_0)}}{a_{N-1}^{(d_0)}(z_1,...,z_n)}+\tau(z_1,...,z_n)}},
\end{equation}
where the parameter $\tau$ satisfies the condition
\begin{equation}\label{20.3.6}
	\tau(z_1,...,z_n)=o(1)
\end{equation}
and atoms $(a^{(d_0)}_i,b^{(d_0)}_i)$ can be found by
 \begin{equation}\label{20.3.7}\begin{split}
	&b^{(d_0)}_0=\mathfrak s^{(d_0)}_{n_1-1}\quad  \mbox{and}\quad b^{(d_0)}_j=\mathfrak s^{(d_0, j)}_{n^{d_0}_{j}-n^{(d_0)}_{j-1}-1}\\&
	 a^{(d_0)}_{j}(z_1,...,z_n)=\cfrac{1}{D^{(d_0)}_{\nu}} \begin{vmatrix}
	\mathfrak  s^{(d_0, j-1)}_{0}&\mathfrak  s^{(d_0, j-1)}_{1}& \ldots&  \mathfrak s^{(d_0, j-1)}_{\nu}\\
	  \ldots& \ldots&\ldots& \ldots\\
	   \mathfrak s^{(d_0, j-1)}_{\nu-1}& \mathfrak s^{(d_0, j-1)}_{\nu}& \ldots&\mathfrak  s^{(d_0, j-1)}_{2\nu-1}\\
	    1& \prod\limits_{i=1}^n z_i&\ldots& \prod\limits_{i=1}^n z^{\nu}_i
	 \end{vmatrix}
	 ,\\&
	\mathfrak s^{(d_0, j)}_i=\cfrac{(-1)^{i+\nu}}{\left(\mathfrak s^{(d_0, j-1)}_{\nu-1}\right)^{i+\nu+2}}\begin{vmatrix}
\mathfrak s^{(d_0, j-1)}_{\nu}& \mathfrak s^{(d_0, j-1)}_{\nu-1} &0&\ldots& 0\\
\vdots& \ddots &\ddots&\ddots & \vdots\\
\vdots&  &\ddots&\ddots &0\\
\vdots&  & &\ddots &\mathfrak  s^{(d_0, j-1)}_{\nu-1}\\
\mathfrak s^{(d_0, j-1)}_{2\nu+i}& \ldots &\ldots&\ldots& \mathfrak s^{(d_0, j-1)}_{\nu-1}
\end{vmatrix},
\end{split}
\end{equation}
where $i=\overline{0,2n^{(d_0)}_N-2n^{(d_0)}_j-1}$, $j=\overline {0, N}$, $\mathfrak s^{(d_0, 0)}_{\nu}=\mathfrak s^{(d_0)}_{\nu}$, $\nu=n^{(d_0)}_j-n^{(d_0)}_{j-1}$ and $n^{(d_0)}_{0}=0$.
\end{proposition}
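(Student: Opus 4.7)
The strategy is to reduce the $n$-variable diagonal problem to a classical one-dimensional moment problem via the substitution $z=\prod_{i=1}^n z_i$, and then to iterate the one-variable Schur step of Theorem~\ref{14p.int.th_ind} exactly $N$ times, reading off the nested fraction layer by layer.

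First, under the diagonal assumption \eqref{20.3.1} only the terms with $\alpha_1^k=\cdots=\alpha_n^k$ survive in the multi-sum \eqref{20.2.3wq}, so $F_{d_0}$ depends on the $z_i$ only through the product $z=\prod_{i=1}^n z_i$. After the relabeling \eqref{20.3.3}, the multivariate expansion \eqref{20.3.2} collapses into the one-variable asymptotic \eqref{20.3.4}; thus $F_{d_0}$, viewed as a function of $z$, solves the ordinary one-dimensional moment problem associated to the scalar sequence $\mathfrak{s}^{(d_0)}=\{\mathfrak{s}^{(d_0)}_i\}_{i=0}^{2n_N^{(d_0)}-1}$, and any solution of the $d_0$-diagonal problem is obtained in this way.

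Next, I apply Theorem~\ref{14p.int.th_ind} with first normal index $n_1^{(d_0)}$ to obtain $F_{d_0}(z)=-b_0^{(d_0)}/(a_0^{(d_0)}(z)+f_1(z))$, where $b_0^{(d_0)}$ and $a_0^{(d_0)}(z)$ are given by \eqref{14.int.th_3} (matching the $j=0$ case of \eqref{20.3.7}) and $f_1$ is built from the recurrence \eqref{14.int.th_4}, yielding precisely the sequence $\mathfrak{s}^{(d_0,1)}$ appearing in \eqref{20.3.7}. This transformed sequence has length $2(n_N^{(d_0)}-n_1^{(d_0)})$, and by the standard behaviour of normal indices under a Schur step its first normal index equals $n_2^{(d_0)}-n_1^{(d_0)}$. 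Proceeding inductively, at the $j$-th stage one applies the same theorem to $\mathfrak{s}^{(d_0,j-1)}$ with $\nu=n_j^{(d_0)}-n_{j-1}^{(d_0)}$, peels off the next layer, and rewrites $-b_j/(a_j+f_{j+1})$ as $-b_j/(a_j-(-f_{j+1}))$ in order to match the sign convention of \eqref{20.3.5}. After $N$ iterations all moment data are exhausted, leaving only the asymptotic constraint \eqref{20.3.6} on the residual $\tau$.

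The main obstacle I expect is bookkeeping: verifying cleanly by induction that the normal-index set of $\mathfrak{s}^{(d_0,j-1)}$ equals $\{n_k^{(d_0)}-n_{j-1}^{(d_0)}\}_{k=j}^{N}$, so that every application of the Schur step is to a sequence of the expected structure, and keeping the sign conventions of \eqref{14.int.th_2} and \eqref{20.3.5} aligned at every layer. Both facts are routine consequences of the Sylvester identity, but they must be packaged into a precise inductive hypothesis in order to identify the determinantal formulas in \eqref{20.3.7} with the output of the iterated Schur algorithm.
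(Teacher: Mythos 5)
Your proposal is correct and follows essentially the same route as the paper: reduce the diagonal problem to the one-variable expansion in $z=\prod_{i=1}^n z_i$ via \eqref{20.3.3}--\eqref{20.3.4} and then invoke the one-dimensional Schur step of Theorem~\ref{14p.int.th_ind}. The only difference is that you spell out the $N$-fold iteration and the normal-index bookkeeping that the paper's proof leaves implicit in its single citation of that theorem.
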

\begin{proof} Assume the sequence of real numbers $\mathbf{s}=\{s_{i_1,i_2,...,i_n}\}_{i_1,...,i_n=0}^{2n^{(d_0)}_N-1}$ satisfies \eqref{20.3.1}. Let $\mathfrak s^{(d_0)}=\{\mathfrak s^{(d_0)}_i\}_{i=0}^{2n^{(d_0)}_N-1}$ be  $d_0$-associated sequence of $\mathbf{s}$ and   let $\mathcal{N}({\mathfrak s^{(d_0)}})$ be the set of normal  indices of the  associated sequence $\mathfrak s^{(d_0)}$. Then any solution of  $d_0$-diagonal truncated problem $\bold{MP}(\mathbf{s},2n^{(d_0)}_N-1)$ takes the following asymptotic expression
\begin{equation}\label{20.3.4}
	F_{d_0}(z)=-\cfrac{\mathfrak s^{(d_0)}_0}{z}-\cfrac{\mathfrak s^{(d_0)}_1}{z^2}-\ldots-\cfrac{\mathfrak s^{(d_0)}_{2n_N-1}}{z^{2n_N}}+o\left(
	\cfrac{1}{z^{2n_N}}\right).
\end{equation}
By Theorem \ref{14p.int.th_ind}, we obtain \eqref{20.3.5}--\eqref{20.3.7}.  This completes the proof.~\end{proof}
%%%%%%%%%%%%%%%%%%%%%%%%%%%%%%%%%%%%%%%%%%%%%%%%%%%%%%%%%%

Next, we describe the set of solutions of $d_0$-diagonal truncated problem $\bold{MP}(\mathbf{s},\ell)$ in terms of $S$--fractions.
%%%%%%%%%%%%%%%%%%%%%%%%%%%%%%%%%%%%%%%%%%%%%%%%%%%%%%%%%%
\begin{proposition}\label{20.th3.3} Let $\mathbf{s}=\{s_{i_1,i_2,...,i_n}\}_{i_1,...,i_n=0}^{2\nu_N^{(d_0)}-2}$ be a sequence of real numbers such that  \eqref{20.3.1} holds. Let  $\mathfrak s^{(d_0)}=\{\mathfrak s^{(d_0)}_i\}_{i=0}^{2\nu_N^{(d_0)}-2}$ be  $d_0$-associated sequence of $\mathbf{s}$ and let $\mathcal{N}({\mathfrak s^{(d_0)}})=\{\nu_j^{(d_0)}\}_{j=1}^N\cup\{\mu_j^{(d_0)}\}_{j=1}^{N-1}$ be the  set of normal indices of $\mathfrak s^{(d_0)}$. Then any solution of $d_0$--diagonal truncated problem $\bold{MP}(\mathbf{s},2\nu^{(d_0)}_N-2)$ admits the following representation
 \begin{equation}\label{20.3.8}
 \begin{split}
 	F_{d_0}(z_1,..., z_n)&=\cfrac{1|}{\big|-m_1^{(d_0)}(z_1,..., z_n)\prod\limits_{j=1}^nz_j}+\cfrac{1|}{\big|l_1^{(d_0)}(z_1,...,z_n)}+\ldots+\\&+
	\cfrac{1|}{\big|-m_N^{(d_0)}(z_1,..., z_n)\prod\limits_{j=1}^nz_j+\cfrac{1}{\tau(z_1,...,z_n)}},
 \end{split}
  \end{equation}
 where  the parameter $\tau$ satisfy the following 
   \begin{equation}\label{20.3.9}
  \cfrac{1}{\tau(z_1, ..., z_n)}=O\left(1\right),
   \end{equation}
  the atoms $\left(m_j^{(d_0)} , l^{(d_0)}_j\right)$ can be calculated by
 \begin{equation}\label{eq:}
        m_j^{(d_0)}(z_1,..., z_n)\!=\!\frac{(-1)^{\nu^{(d_0)}\!+\!1}}{D^{(d_0,2j\!-\!1)}_{\nu^{(d_0)}}}\!
        \begin{vmatrix}
            0 & \ldots & 0 &  \mathfrak s^{(d_0,2j\!-\!1)}_{\nu^{(d_0)}\!-\!1} &  \mathfrak s^{(d_0,2j\!-\!1)}_{\nu^{(d_0)}} \\
            \vdots &  & \ldots & \ldots & \vdots \\
            \mathfrak s^{(d_0,2j\!-\!1)}_{\nu^{(d_0)}\!-\!1} &\mathfrak s^{(d_0,2j\!-\!1)}_{\nu^{(d_0)}} & \ldots & \ldots & \mathfrak s^{(d_0,2j\!-\!1)}_{2\nu^{(d_0)}\!-\!2} \\
            1 \!&\! \prod\limits_{j=1}^n\!\!z_j \!&\! \ldots\!&\! \prod\limits_{j=1}^n\!\!z_j^{\nu^{(d_0)}\!-\!2}\!& \prod\limits_{j=1}^n\!\!z_j^{\nu^{(d_0)}\!-\!1}\! \\
        \end{vmatrix},
\end{equation}

\begin{equation}\label{20.3.10}
l_j^{(d_0)}\!(z_1,\!..., z_n)
  \! =\! \!\left\{\!\!\!\!
   \begin{array}{cl}
       \frac{1}{{\mathfrak{s}}_{\!-1}^{(d_0,2j)}}=(-1)^{\nu^{(d_0)}
    +1} \mathfrak s_{\nu^{(d_0)}\!-\!1}^{(d_0,2j\!-\!1)}\frac{D_{\nu^{(d_0)}}^{(d_0,2j\!-\!1)}}{D_{\nu^{(d_0)}}^{(d_0,2j\!-\!1)+}},   \,\nu^{(d_0)}_j=\mu^{(d_0)}_j;\\
       \frac{1}{{ \mathfrak{s}}^{(d_0,2j)}_{\mu^{(d_0)}\!-\!1}{ D}^{(d_0,2j)}_{\mu^{(d_0)}}}
    \begin{vmatrix}
   \! { \mathfrak{s}}^{(d_0,2j)}_{0} \!&\! \ldots \!& \!{ \mathfrak{s}}^{(d_0,2j)}_{\mu^{(d_0)}\!-\!1} \!&\! { \mathfrak{s}}^{(d_0,2j)}_{\mu^{(d_0)}} \!\\
    \!\cdots \!& \!\cdots \!&\! \cdots \!&\! \cdots \!\\
   \! { \mathfrak{s}}^{(d_0,2j)}_{\mu^{(d_0)}\!-\!1} \!& \!\ldots \!&\! { \mathfrak{s}}^{(d_0,2j)}_{2\mu^{(d_0)}\!-\!2} \!& \!{ \mathfrak{s}}^{(d_0,2j)}_{2\mu^{(d_0)}\!-\!1} \!\\
    1 \!&\! \ldots \!&\! \!\prod\limits_{j=1}^n\!\!z_j^{\mu^{(d_0)}-1} \!\!& \!\prod\limits_{j=1}^n\!\!z_j^{\mu^{(d_0)}}\! \!\\
    \end{vmatrix}, \,\nu^{(d_0)}_j\!<\!\mu^{(d_0)}_j,\\
    \end{array}
    \right.
\end{equation}
where the recurrence sequences $\left\{\mathfrak{s}^{(d_0,2j)}_{i}\right\}_{i=-1}^{2\left(\nu^{(d_0)}_N-\nu^{(d_0)}_j\right)-2}$ and
$\left\{\mathfrak s^{(d_0,2j-1)}_{i}\right\}_{i=0}^{2\left(\nu^{(d_0)}_N-\mu^{(d_0)}_j\right)-2}$ can be  found by  
\begin{equation}\label{20.3.11}\begin{split}
   &I_{\ell^{(d_0)}_j-\nu^{(d_0)}_j+2}=\\&= T(m_{\nu^{(d_0)}_j\!-\!1}^{(d_0,j)},\ldots,m_0^{(d_0,j)},\!-\!{\mathfrak s}^{(d_0,2j)}_{\!-\!1},\ldots,-
 {\mathfrak s}^{(d_0,2j)}_{\ell^{(d_0)}_j\!-\!2\nu^{(d_0)}_j}) \, T(\mathfrak s^{(d_0,2j\!-\!1)}_{\nu^{(d_0)}_j\!-\!1},\dots,\mathfrak s^{(d_0,2j\!-\!1)}_{\ell^{(d_0)}_j}),\end{split}
\end{equation}
where $ \ell_{j}^{(d_0)}=2\nu_N^{(d_0)}-2\mu^{(d_0)}_{j-1}-2$  and $\mathfrak s_i^{(d_0, 1)}=\mathfrak s_i^{(d_0)}$.
\begin{equation}\label{20.3.12}\begin{split}&
     T(l_{\mu^{(d_0)}_j-\nu^{(d_0)}_j}^{(d_0,j)},\ldots,l_0^{(d_0j)},-{\mathfrak s}^{(d_0,2j+1)}_{0},\ldots,-
 {\mathfrak s}^{(d_0,2j+1)}_{2\nu_N^{(d_0)}-2\mu^{(d_0)}_j-2}) \times\\&\times T({\mathfrak s}^{(d_0,2j)}_{\mu^{(d_0)}_j-\nu^{(d_0)}_j-1},
 \dots,{\mathfrak {s}}^{(d_0,2j)}_{\ell^{(d_0)}_j\!-\!2\nu^{(d_0)}_j})=I_{2\nu^{(d_0)}_N-\mu^{(d_0)}_j-\nu^{(d_0)}_j}.\end{split}
\end{equation}
%$\nu^{(d_0)}=\nu^{(d_0)}_j-\mu^{(d_0)}_{j-1}$, $\mu^{(d_0)}=\mu^{(d_0)}_{j}-\nu^{(d_0)}_j$  and $\mathfrak s_i^{(d_0, 1)}=\mathfrak s_i^{(d_0)}$.
\end {proposition}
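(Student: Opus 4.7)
The plan is to reduce the $d_0$-diagonal problem to a purely one-dimensional problem in the new variable $z=\prod_{i=1}^n z_i$, and then invoke Theorem \ref{2p.pr.alg1} verbatim. Because the hypothesis \eqref{20.3.1} forces every non-diagonal moment to vanish, the general multidimensional asymptotic expansion \eqref{20.2.3wq} collapses to \eqref{20.3.2}. With the $d_0$-associated sequence $\mathfrak s^{(d_0)}_j=\binom{n\times j}{j,\ldots,j}s_{j,\ldots,j}$ and the substitution $z=\prod_{i=1}^n z_i$, the expansion \eqref{20.3.2} is nothing but \eqref{20.3.4} with $\ell=2\nu_N^{(d_0)}-2$. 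Moreover, each $z_i\widehat{\to}\infty$ implies $z\widehat{\to}\infty$, so $F_{d_0}$ is identified with a solution of the one-dimensional problem $MP(\mathfrak s^{(d_0)},2\nu_N^{(d_0)}-2)$.

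First I would verify that the normal-index structure in the two settings agrees. Since the associated sequence $\mathfrak s^{(d_0)}$ is the input to Theorem~\ref{2p.pr.alg1}, the set of normal indices is $\mathcal{N}({\mathfrak s^{(d_0)}})=\{\nu_j^{(d_0)}\}_{j=1}^N\cup\{\mu_j^{(d_0)}\}_{j=1}^{N-1}$ by hypothesis; no further translation is needed. I would then apply Theorem~\ref{2p.pr.alg1} directly to $\mathfrak s^{(d_0)}$ to obtain a continued-fraction expansion of the form \eqref{5p.eq:3.6*9} in the variable $z$, with atoms given by \eqref{eqimnm1:}--\eqref{eq:l_j0xxx1} and recurrence sequences defined by the Toeplitz identities stated after \eqref{eq:l_j0xxx1}.

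Next I would translate the one-dimensional formulas back to the multidimensional variables. In the atoms, every occurrence of $z^k$ in \eqref{eqimnm1:}--\eqref{eq:l_j0xxx1} is replaced by $\prod_{i=1}^n z_i^k$; this produces precisely the determinants defining $m_j^{(d_0)}$ and $l_j^{(d_0)}$ stated in \eqref{20.3.10}. The factor $-z\,m_j(z)$ in \eqref{5p.eq:3.6*9} becomes $-\prod_{i=1}^n z_i\cdot m_j^{(d_0)}(z_1,\ldots,z_n)$, which is exactly the pattern appearing in \eqref{20.3.8}. The Toeplitz-matrix recurrences \eqref{20.3.11}--\eqref{20.3.12} are the one-dimensional recurrences stated in Theorem~\ref{2p.pr.alg1} with $\ell=2\nu^{(d_0)}_N-2\mu^{(d_0)}_{j-1}-2$ plugged in, since the $d_0$-associated sequence does not depend on the $z_i$ but only on $\mathfrak{s}^{(d_0)}$; hence they transfer without change.

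The only nontrivial point I foresee is the translation of the growth condition \eqref{eq:tau_j} on $\tau$: in the one-dimensional setting the condition reads $1/\tau(z)=o(z)$, and under $z=\prod_{i=1}^n z_i$ with every $z_i\widehat{\to}\infty$ this becomes the statement \eqref{20.3.9}. This is the main place where one must justify that the class of admissible parameters in the multidimensional problem is faithfully described by the class in the one-dimensional one; once this is checked, the claimed representation \eqref{20.3.8} for every solution follows from Theorem~\ref{2p.pr.alg1}, and conversely any choice of $\tau$ obeying \eqref{20.3.9} produces a genuine solution by the same theorem applied in reverse.
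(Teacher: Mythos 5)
Your proposal is correct and follows essentially the same route as the paper: reduce to the one-dimensional expansion \eqref{20.3.14ddsz1} via the substitution $z=\prod_{i=1}^n z_i$ and the $d_0$-associated sequence \eqref{20.3.3}, then invoke Theorem \ref{2p.pr.alg1} and translate the atoms and Toeplitz recurrences back to the variables $z_1,\ldots,z_n$. If anything, you are slightly more careful than the paper, which applies Theorem \ref{2p.pr.alg1} without commenting on the passage from the one-dimensional condition $1/\tau(z)=o(z)$ of \eqref{eq:tau_j} to the stated condition \eqref{20.3.9} that you rightly flag as the one point needing justification.
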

%%%%%%%%%%%%%%%%%%%%%%%%%%%%%%%%%%%%%%%%%%%%%%%%%%%%%%%%%%%%%%%
\begin{proof}  Let the sequence of real numbers $\mathbf{s}=\{s_{i_1,i_2,...,i_n}\}_{i_1,...,i_n=0}^{2\nu_N^{(d_0)}-2}$ satisfy \eqref{20.3.1}. Consequently, the associated function $F_{d_0}$ takes the  form
\[\begin{split}
	F_{d_0}(z_1,...,z_n)&\!=\!-\cfrac{s_{0,...,0}}{\prod\limits_{i=1}^n z_i}\!-\!\cfrac{\binom{n}{1,...,1}s_{1,...,1}}{\prod\limits_{i=1}^n z_i^{2}}-\ldots-
	\cfrac{\binom{n\times (2\nu^{(d_0)}_N-2)}{2\nu^{(d_0)}_N-2,...,2\nu^{(d_0)}_N-2}s_{2\nu^{(d_0)}_N-2,...,2\nu^{(d_0)}_N-2}}{\prod\limits_{i=1}^n z_i^{2\nu^{(d_0)}_N-1}}+\\&+o\left(\cfrac{1}{\prod\limits_{i=1}^n z_i^{2\nu^{(d_0)}_N-1}}\right).
\end{split}\]
Let us define   $d_0$-associated sequence $\mathfrak s^{(d_0)}=\left\{\mathfrak s^{(d_0)}_i\right\}_{i=0}^{2\nu_N^{(d_0)}-2}$ by \eqref{20.3.3}  and let $\mathcal{N}\left({\mathfrak s^{(d_0)}}\right)=\left\{\!\nu_j^{(d_0)}\!\right\}_{j\!=\!1}^N\!\!\cup\!\left\{\!\mu_j^{(d_0)}\!\right\}_{j=1}^{N\!-\!1}$ be the set of normal indices of  $\mathfrak s^{(d_0)}$. Then $F_{d_0}$ can be rewritten as
 \begin{equation}\label{20.3.14ddsz1}
	F_{d_0}(z)=-\cfrac{\mathfrak s^{(d_0)}_0}{z}-\cfrac{\mathfrak s^{(d_0)}_1}{z^2}-\ldots-\cfrac{\mathfrak s^{(d_0)}_{2\nu^{(d_0)}_N-2}}{z^{2\nu^{(d_0)}_N-1}}+o\left(
	\cfrac{1}{z^{2\nu^{(d_0)}_N-1}}\right), \quad\mbox{where}\quad z=\prod\limits_{i=1}^n z_i.
\end{equation}
By Theorem \ref{2p.pr.alg1}, we obtain \eqref{20.3.8}--\eqref{20.3.12}.  This completes the proof.~\end{proof}
%%%%%%%%%%%%%%%%%%%%%%%%%%%%%%%%%%%%%%%%%%%%%%%%%%%%%%%%%%%%%%%%%%%%%%%%%%%%%%%%%%%%%%%
\begin{remark}\label{20p. cor 3.3} If  $d_0$-associated sequence $\mathfrak s^{(d_0)}=\{\mathfrak s^{(d_0)}_i\}_{i=0}^{2\nu_N^{(d_0)}-2}$  is  regular (see Definition \ref{def:3p.4.3}), then $l_j^{(d_0)}=$const and
 \begin{equation}\label{20.3.14x1}
	l_j^{(d_0)}
   =   \frac{1}{{\mathfrak{s}}_{-1}^{(d_0,2j)}}\qquad\mbox{for all } j=\overline{1,N-1}.
   \end{equation}
\end{remark}

Next we define Stieltjes polynomials of the first and second kind, which will be used to describe the solution.
%%%%%%%%%%%%%%%%%%%%%%%%%%%%%%%%%%%%%%%%%%%%%%%%%%%%%%%%%%%%%%%%%%%%%%%%%%%%%%%%%%%%%%%
\begin{definition}\label{20p. def3.4} (\cite{K17}) The polynomials $P_{j,d_0}^{+}$ and $Q_{j,d_0}^{+}$ can be defined similar as the solutions of the following system
 \begin{equation}\label{20.3.14xz1}
   \left\{
    \begin{array}{rcl}
    y_{2j,d_0}-y_{2j-2,d_0}=l^{(d_0)}_{j}(z_1,...,z_n)y_{2j-1,d_0},\\
    y_{2j+1,d_0}-y_{2j-1,d_0}=-m^{(d_0)}_{j+1}(z_1,...,z_n)y_{2j,d_0}\prod\limits_{k=1}^nz_k\\
    \end{array}\right.
   \end{equation}
subject to the initial conditions 
 \begin{equation}\label{20.3.14xz2}\begin{split}&
    P_{-1,d_0}^{+}(z_1,...,z_n)\equiv0\quad\mbox{and
    }\quad P_{0,d_0}^{+}(z_1,...,z_n)\equiv1,\\&
    Q_{-1,d_0}^{+}(z_1,...,z_n)\equiv 1\quad\mbox{and
    }\quad Q_{0,d_0}^{+}(z_1,...,z_n)\equiv0.
    \end{split}
   \end{equation}
   
    $P_{j,d_0}^{+}$ and $Q_{j,d_0}^{+}$ are called  $d_0$--Stieltjes polynomials of the first and second kinds, respectively.\end{definition}
%%%%%%%%%%%%%%%%%%%%%%%%%%%%%%%%%%%%%%%%%%%%%%%%%%%%%%%%%%%%%

 \begin{proposition}\label{20p.prop3.7} Let $\mathbf{s}=\{s_{i_1,i_2,...,i_n}\}_{i_1,...,i_n=0}^{2\nu_N^{(d_0)}-2}$ be a sequence of real numbers such that  \eqref{20.3.1} holds. Let  $\mathfrak s^{(d_0)}=\{\mathfrak s^{(d_0)}_i\}_{i=0}^{2\nu_N^{(d_0)}-2}$ be  $d_0$-associated sequence of $\mathbf{s}$ and let $\mathcal{N}({\mathfrak s^{(d_0)}})=\{\nu_j^{(d_0)}\}_{j=1}^N\cup\{\mu_j^{(d_0)}\}_{j=1}^{N-1}$ be the set of normal indices of $\mathfrak s^{(d_0)}$. Then:
  \begin{enumerate}
 \item Any solution of   $d_0-$diagonal truncated problem $\bold{MP}(\mathbf{s},2\nu^{(d_0)}_N-2)$ admits the following representation in terms of  $d_0-$Stieltjes polynomials by
 \begin{equation}\label{20.3.prp3.5}
F_{d_0}(z_1,...,z_n)=\cfrac{Q^+_{2N-1,d_0}(z_1,...,z_n)\tau(z_1,...,z_n)+Q^+_{2N-2,d_0}(z_1,...,z_n)}{P^+_{2N-1,d_0}(z_1,...,z_n)\tau(z_1,...,z_n)+P^+_{2N-2,d_0}(z_1,...,z_n)},
\end{equation}
 where the parameter $\tau$ satisfies \eqref{20.3.9}.

 \item The resolvent matrix $W_{2N-1,d_0}$ of  $d_0-$diagonal truncated problem $\bold{MP}(\mathbf{s},2\nu^{(d_0)}_N~-~2)$ can be represented in terms of   $d_0-$Stieltjes polynomials by
  \begin{equation}\label{20.3.prp3.5.1}
  	W_{2N-1,d_0}(z_1,...,z_n)=\begin{pmatrix}
Q^+_{2N-1,d_0}(z_1,...,z_n) & Q^+_{2N-2,d_0}(z_1,...,z_n)  \\
P^+_{2N-1,d_0}(z_1,...,z_n) & P^+_{2N-2,d_0}(z_1,...,z_n)
\end{pmatrix}.
  \end{equation}
  
  Furthermore, the matrix valued function  $W_{2N-1,d_0}$ admits the following factorization
    \begin{equation}\label{20.3.prp3.5.2}\begin{split}
  	W_{2N-1,d_0}(z_1,...,z_n)&=M_{1,d_0}(z_1,...,z_n)\times L_{1,d_0}(z_1,...,z_n)\times\ldots\times\\& \times L_{N-1,d_0}(z_1,...,z_n)\times M_{N,d_0}(z_1,...,z_n),
  \end{split}  \end{equation}
  where the matrices $M_{j,d_0}$ and $L_{j,d_0}$ are defined by
    \begin{equation}\label{20.3.prp3.5.3}\begin{split}&
  M_{j,d_0}(z_1,...,z_n)=\begin{pmatrix}
1& 0\\
-m_j^{(d_0)}(z_1,...,z_n)\!\!\prod\limits_{i=1}^n\!\! z_i& 1
\end{pmatrix}, \\&L_{j,d_0}(z_1,...,z_n)=\begin{pmatrix}
1& l_j^{(d_0)}(z_1,...,z_n)\\
0& 1
\end{pmatrix}.\end{split}
  \end{equation}
 \end{enumerate}
 \end{proposition}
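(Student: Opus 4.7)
The plan is to derive all three parts from the continued fraction representation (\ref{20.3.8}) provided by Proposition \ref{20.th3.3}, using the standard transfer-matrix formalism for $S$-fractions together with an induction on the number of atoms $N$.

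First I would identify the polynomials $P^+_{k,d_0}$ and $Q^+_{k,d_0}$ of Definition \ref{20p. def3.4} with the denominators and numerators of the successive convergents of (\ref{20.3.8}). This is a direct verification: writing each elementary step of the continued fraction as a linear fractional transformation in its tail produces exactly the three-term recurrence (\ref{20.3.14xz1}), and the initial conditions (\ref{20.3.14xz2}) match the zeroth and first convergents computed by hand. A routine induction on $k$ then propagates the identification.

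Once this is in hand, part (1) follows by substituting the tail $\tau$ back into the standard convergent formula for continued fractions, yielding
\[
F_{d_0}(z_1,\dots,z_n)=\frac{Q^+_{2N-1,d_0}\,\tau+Q^+_{2N-2,d_0}}{P^+_{2N-1,d_0}\,\tau+P^+_{2N-2,d_0}},
\]
which is (\ref{20.3.prp3.5}); a small sanity check with $N=1$ reproduces $F_{d_0}=1/(-m_1^{(d_0)}\prod z_j+1/\tau)$ as in (\ref{20.3.8}). Part (2) is then purely a reformulation: the M\"obius action in $\tau$ on the right is precisely the action of the $2\times 2$ matrix $W_{2N-1,d_0}$ in (\ref{20.3.prp3.5.1}).

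For the factorization (\ref{20.3.prp3.5.2}), I would argue by induction on $N$. The base case $N=1$ is immediate: $W_{1,d_0}=M_{1,d_0}$ by direct computation of $P^+_0,P^+_1,Q^+_0,Q^+_1$. For the inductive step it suffices to verify $W_{2k+1,d_0}=W_{2k-1,d_0}\,L_{k,d_0}\,M_{k+1,d_0}$. Right multiplication of $W_{2k-1,d_0}$ by the upper-triangular $L_{k,d_0}$ replaces the second column $(Q^+_{2k-2,d_0},P^+_{2k-2,d_0})^T$ by $(Q^+_{2k-2,d_0}+l_k^{(d_0)}Q^+_{2k-1,d_0},\,P^+_{2k-2,d_0}+l_k^{(d_0)}P^+_{2k-1,d_0})^T=(Q^+_{2k,d_0},P^+_{2k,d_0})^T$ by the first line of (\ref{20.3.14xz1}); subsequent multiplication by the lower-triangular $M_{k+1,d_0}$ updates the first column via the second line of (\ref{20.3.14xz1}), producing $(Q^+_{2k+1,d_0},P^+_{2k+1,d_0})^T$. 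The one point that has to be watched is the parity bookkeeping: odd indices must track the $m$-atoms and the $M_{j,d_0}$ factors while even indices track the $l$-atoms and the $L_{j,d_0}$ factors, and intermediate products such as $M_1 L_1$ do \emph{not} coincide with $W_{2k,d_0}$ (only with a column-swapped version of it). Once this index convention is fixed, the entire argument reduces to $2\times 2$ matrix multiplication, and no identities beyond the recurrences of Definition \ref{20p. def3.4} are needed.
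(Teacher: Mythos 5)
Your argument is correct, but it follows a genuinely different route from the paper. The paper's proof is a two-line reduction: after passing to the single variable $z=\prod_{i=1}^n z_i$, the function $F_{d_0}$ has the one-dimensional asymptotic expansion \eqref{20.3.14ddsz1}, and all three assertions are then quoted directly from the one-dimensional result \cite[Theorem 5.2]{K17}. You instead start from Proposition \ref{20.th3.3} (the continued-fraction representation \eqref{20.3.8}, which the paper also obtains by reduction) and rebuild the Stieltjes-polynomial statements from scratch: identifying $P^{+}_{k,d_0},Q^{+}_{k,d_0}$ with denominators and numerators of the convergents via the recurrence \eqref{20.3.14xz1}, obtaining \eqref{20.3.prp3.5} from the standard tail formula, and proving the factorization \eqref{20.3.prp3.5.2} by the induction $W_{2k+1,d_0}=W_{2k-1,d_0}L_{k,d_0}M_{k+1,d_0}$ with base case $W_{1,d_0}=M_{1,d_0}$; all of these computations check out. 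What this buys is a self-contained, verifiable proof at the level of $2\times2$ matrix algebra, whereas the paper's approach is shorter and delegates exactly these facts to the one-dimensional machinery of \cite{K17}. One small correction to your parity remark: with the paper's own convention \eqref{20.3.prp3.8.1} for the even-index resolvent matrix, the intermediate product is $W_{2k-1,d_0}L_{k,d_0}=\begin{pmatrix}Q^+_{2k-1,d_0}&Q^+_{2k,d_0}\\ P^+_{2k-1,d_0}&P^+_{2k,d_0}\end{pmatrix}=W_{2k,d_0}$ exactly, so no column swap is involved; the caveat is harmless for your induction (which only concerns odd indices) but is not accurate as stated.
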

 %%%%%%%%%%%%%%%%%%%%%%%%%%
\begin{proof} Suppose the sequence of real numbers $\mathbf{s}=\{s_{i_1,i_2,...,i_n}\}_{i_1,...,i_n=0}^{2\nu_N^{(d_0)}-2}$ satisfies  \eqref{20.3.1}  and $s^{(d_0)}=\{\mathfrak s^{(d_0)}_i\}_{i=0}^{2\nu_N^{(d_0)}-2}$ is  $d_0$-associated sequence of $\mathbf{s}$, $\mathcal{N}({\mathfrak s^{(d_0)}})=\{\nu_j^{(d_0)}\}_{j=1}^N\cup\{\mu_j^{(d_0)}\}_{j=1}^{N-1}$ is the  set of normal indices of   $\mathfrak s^{(d_0)}$. Then $F_{d_0}$ admits the asymptotic expansion \eqref{20.3.14ddsz1}. By \cite[Theorem 5.2]{K17},  we obtain \eqref{20.3.prp3.5}-\eqref{20.3.prp3.5.3}. This completes the proof.~\end{proof}

Next, we study the even case:
\begin{proposition}\label{20.th3.4} Let $\mathbf{s}=\{s_{i_1,i_2,...,i_n}\}_{i_1,...,i_n=0}^{2\mu_N^{(d_0)}-1}$ be a sequence of real numbers such that  \eqref{20.3.1} holds. Let  $\mathfrak s^{(d_0)}=\{\mathfrak s^{(d_0)}_i\}_{i=0}^{2\mu_N^{(d_0)}-1}$ be  $d_0$-associated sequence of $\mathbf{s}$ and let $\mathcal{N}({\mathfrak s^{(d_0)}})=\{\nu_j^{(d_0)}\}_{j=1}^N\cup\{\mu_j^{(d_0)}\}_{j=1}^{N}$ be the set of normal indices  of $\mathfrak s^{(d_0)}$. Then any solution of $d_0$--diagonal truncated problem $\bold{MP}(\mathbf{s},2\mu^{(d_0)}_N-1)$ admits the representation
 \begin{equation}\label{20.3.14}\begin{split}
 	F_{d_0}(z_1,..., z_n)&=\cfrac{1|}{\big|-m_1^{(d_0)}(z_1,..., z_n)\prod\limits_{j=1}^nz_j}+\cfrac{1|}{\big|l_1^{(d_0)}(z_1,...,z_n)}+\ldots+\\&+
	\cfrac{1|}{\big|-m_N^{(d_0)}(z_1,..., z_n)\prod\limits_{j=1}^nz_j}+\cfrac{1|}{\big|l_N^{(d_0)}(z_1,...,z_n)+\tau(z_1,...,z_n)},
 \end{split}\end{equation}
 where  the parameter $\tau$ satisfy the following  
\begin{equation}\label{20.3.15}
  	\tau(z_1, ..., z_n)=o\left(1\right),
   \end{equation}
 the atoms $(m_j^{(d_0)} , l^{(d_0)}_j)$ can be found by~\eqref{eq:}--\eqref{20.3.10}.
\end{proposition}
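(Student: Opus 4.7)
The plan is to reduce the statement to the one-dimensional even-case Theorem~\ref{14p.int.th_ind**} via the scalar substitution $z = \prod_{i=1}^{n} z_i$, following exactly the template of the proof of Proposition~\ref{20.th3.3}.

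First, I would unfold the definition of the associated function $F_{d_0}$ under the diagonal hypothesis~\eqref{20.3.1}. Since $s_{i_1,\ldots,i_n}=0$ whenever the indices disagree, the only surviving terms in the asymptotic expansion~\eqref{20.2.3wq}, now truncated at level $\ell=2\mu_N^{(d_0)}-1$, are the diagonal moments $s_{j,\ldots,j}$, producing an expansion supported only on the powers $1/\prod_{i=1}^{n} z_i^{j+1}$, $j=0,\ldots,2\mu_N^{(d_0)}-1$.

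Second, I would introduce the scalar variable $z = \prod_{i=1}^{n} z_i$ together with the $d_0$-associated sequence $\mathfrak{s}^{(d_0)}_j = \binom{n\times j}{j,\ldots,j}\,s_{j,\ldots,j}$, exactly as in~\eqref{20.3.3}. In these variables the multidimensional expansion collapses to a one-dimensional asymptotic expansion of the form~\eqref{20.3.4} with length $2\mu_N^{(d_0)}$, so that the problem becomes the scalar moment problem $MP(\mathfrak{s}^{(d_0)}, 2\mu_N^{(d_0)}-1)$, whose set of normal indices is by hypothesis $\{\nu_j^{(d_0)}\}_{j=1}^N\cup\{\mu_j^{(d_0)}\}_{j=1}^N$.

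Third, I would apply Theorem~\ref{14p.int.th_ind**} to the sequence $\mathfrak{s}^{(d_0)}$. Its conclusion yields the $S$-fraction representation~\eqref{5p.eq:f1} ending with the block $l_N(z) + \tau(z)$ where $\tau(z) = o(1)$, and atoms $(m_j,l_j)$ given by~\eqref{eqimnm1:}--\eqref{eq:l_j0xxx1}. Substituting back $z \mapsto \prod_{i=1}^{n} z_i$ in the continued fraction and in each atom then produces~\eqref{20.3.14} with the atoms described by~\eqref{eq:}--\eqref{20.3.10}, and $\tau$ satisfying~\eqref{20.3.15}. The only subtlety worth checking is the compatibility of the asymptotic modes: nontangential $z_i \widehat{\rightarrow} \infty$ in each variable must imply $\prod_{i=1}^{n} z_i \widehat{\rightarrow} \infty$ in the scalar sense required by Theorem~\ref{14p.int.th_ind**}, so that the remainder condition pulls back to~\eqref{20.3.15}. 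This is a short sector-closure-under-multiplication observation and is the only step not already present in the odd case; the remainder of the argument is structurally identical to the proof of Proposition~\ref{20.th3.3}.
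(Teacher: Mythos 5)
Your proposal is correct and follows essentially the same route as the paper: collapse the diagonal expansion to the one-variable expansion \eqref{20.3.qqw_1} via $z=\prod_{i=1}^n z_i$ and the $d_0$-associated sequence \eqref{20.3.3}, then invoke the even-case Theorem~\ref{14p.int.th_ind**} and substitute back to obtain \eqref{20.3.14}--\eqref{20.3.15}. Your extra remark on the compatibility of the nontangential limits under the substitution is a point the paper's proof passes over silently, so it only adds care rather than diverging from the argument.
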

\begin{proof}  Let  $\mathbf{s}=\{s_{i_1,i_2,...,i_n}\}_{i_1,...,i_n=0}^{2\mu_N^{(d_0)}-1}$ satisfy \eqref{20.3.1}. Then
\[\begin{split}
	F_{d_0}(z_1,...,z_n)=&-\cfrac{s_{0,...,0}}{\prod\limits_{i=1}^n z_i}-\cfrac{\binom{n}{1,...,1}s_{1,...,1}}{\prod\limits_{i=1}^n z_i^{2}}-\ldots-\\&-
	\cfrac{\binom{n\times (2\mu^{(d_0)}_N-1)}{2\mu^{(d_0)}_N-1,...,2\mu^{(d_0)}_N-1}s_{2\mu^{(d_0)}_N-1,...,2\mu^{(d_0)}_N-1}}{\prod\limits_{i=1}^n z_i^{2\mu^{(d_0)}_N}}+o\left(\cfrac{1}{\prod\limits_{i=1}^n z_i^{2\mu^{(d_0)}_N}}\right).
\end{split}\]
Let  $d_0$-associated sequence $\mathfrak s^{(d_0)}=\{\mathfrak s^{(d_0)}_i\}_{i=0}^{2\mu_N^{(d_0)}-1}$ be defined by \eqref{20.3.3}  and $\mathcal{N}({\mathfrak s^{(d_0)}})=\{\nu_j^{(d_0)}\}_{j=1}^N\cup\{\mu_j^{(d_0)}\}_{j=1}^{N}$ be  the set of normal indices of  the sequence $\mathfrak s^{(d_0)}$.  Therefore, $F_{d_0}$ can be rewritten as
\begin{equation}\label{20.3.qqw_1}
	F(z)=-\cfrac{\mathfrak s^{(d_0)}_0}{z}-\cfrac{\mathfrak s^{(d_0)}_1}{z^2}-\ldots-\cfrac{\mathfrak s^{(d_0)}_{2\mu^{(d_0)}_N-1}}{z^{2\mu^{(d_0)}_N}}+o\left(
	\cfrac{1}{z^{2\mu^{(d_0)}_N}}\right), \quad\mbox{where}\quad z=\prod\limits_{i=1}^n z_i.
   \end{equation}
By Theorem \ref{14p.int.th_ind**}, we obtain \eqref{20.3.14}--\eqref{20.3.15}.  This completes the proof.~\end{proof}
%%%%%%%%%%%%%%%%%%%%%%%%%%%%%%%%%%%%%%%%%%%%%%%%%%%%%%%%%%%%%%%%%%%%%%%%%%%%%%%%%%%%%%%
\begin{remark}\label{20p. cor 3.5} If  $d_0$-associated sequence $\mathfrak s^{(d_0)}=\{\mathfrak s^{(d_0)}_i\}_{i=0}^{2\mu_N^{(d_0)}-1}$  is the regular sequence, then  all $l_j^{(d_0)}=$const and
 \begin{equation}\label{20.3.14y1}
	l_j^{(d_0)}
   =   \frac{1}{{\mathfrak{s}}_{-1}^{(d_0,2j)}}\qquad\mbox{for all } j=\overline{1,N}.
   \end{equation}
\end{remark}
%%%%%%%%%%%%%%%%%%%%%%%%%%%%%%%%%%%%%%%%%%%%%%%%%%%%%%%%%%%%%%%%%%%%%%%%%%%%%%%%%%%%

 \begin{proposition}\label{20p.prop3.8}Let $\mathbf{s}=\{s_{i_1,i_2,...,i_n}\}_{i_1,...,i_n=0}^{2\mu_N^{(d_0)}-1}$ be the sequence of real numbers such that  \eqref{20.3.1} holds. Let  $\mathfrak s^{d_0}=\{\mathfrak s^{(d_0)}_i\}_{i=0}^{2\mu_N^{(d_0)}-1}$ be $d_0$-associated sequence of $\mathbf{s}$ and let $\mathcal{N}({\mathfrak s^{(d_0)}})=\{\nu_j^{(d_0)}\}_{j=1}^N\cup\{\mu_j^{(d_0)}\}_{j=1}^{N}$ be the  set of normal indices of  $\mathfrak s^{(d_0)}$. Then:
   \begin{enumerate}
 \item Any solution of $d_0$--diagonal truncated problem $\bold{MP}(\mathbf{s},2\mu^{(d_0)}_N-1)$ admits the representation in terms of  $d_0-$Stieltjes polynomials
 \begin{equation}\label{20.3.prp3.8} by
F_{d_0}(z_1,...,z_n)=\cfrac{Q^+_{2N-1,d_0}(z_1,...,z_n)\tau(z_1,...,z_n)+Q^+_{2N,d_0}(z_1,...,z_n)}{P^+_{2N-1,d_0}(z_1,...,z_n)\tau(z_1,...,z_n)+P^+_{2N,d_0}(z_1,...,z_n)},
\end{equation}
 where the parameter $\tau$ satisfies \eqref{20.3.15}.
 \item the resolvent matrix $W_{2N,d_0}$ of  $d_0-$diagonal truncated problem $\bold{MP}(\mathbf{s},2\mu^{(d_0)}_N~-~1)$ can be represented in terms of   $d_0-$Stieltjes polynomials by
  \begin{equation}\label{20.3.prp3.8.1}
  	W_{2N,d_0}(z_1,...,z_n)=\begin{pmatrix}
Q^+_{2N-1,d_0}(z_1,...,z_n) & Q^+_{2N,d_0}(z_1,...,z_n)  \\
P^+_{2N-1,d_0}(z_1,...,z_n) & P^+_{2N,d_0}(z_1,...,z_n)
\end{pmatrix}.
  \end{equation}
  
  Furthermore,   $W_{2N,d_0}$ admits the following factorization
    \begin{equation}\label{20.3.prp3.8.2}\begin{split}
  	W_{2N,d_0}(z_1,...,z_n)=&M_{1,d_0}(z_1,...,z_n)\times L_{1,d_0}(z_1,...,z_n)\times\ldots\times \\&\times M_{N,d_0}(z_1,...,z_n)\times L_{N,d_0}(z_1,...,z_n),
   \end{split} \end{equation}
  where the matrices $M_{j,d_0}$ and $L_{j,d_0}$ are defined by \eqref{20.3.prp3.5.3}.
 \end{enumerate}
 \end{proposition}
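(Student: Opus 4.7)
The plan is to mirror the proof of Proposition \ref{20p.prop3.7} and reduce everything to the one--variable even case. First I would record the asymptotic expansion of $F_{d_0}$: starting from the hypothesis \eqref{20.3.1} on $\mathbf{s}=\{s_{i_1,\ldots,i_n}\}_{i_1,\ldots,i_n=0}^{2\mu_N^{(d_0)}-1}$, only the diagonal moments survive, and collapsing the multinomial factors into $\mathfrak s^{(d_0)}_j=\binom{n\times j}{j,\ldots,j}s_{j,\ldots,j}$ together with the substitution $z=\prod_{i=1}^n z_i$ gives exactly the one--variable expansion \eqref{20.3.qqw_1}. This is the same reduction used in the odd case, and it shows that describing solutions of the $d_0$--diagonal truncated problem $\bold{MP}(\mathbf{s},2\mu_N^{(d_0)}-1)$ is equivalent to describing solutions of a classical one--variable moment problem with data $\mathfrak s^{(d_0)}$ of length $2\mu_N^{(d_0)}$.

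Next I would invoke Proposition \ref{20.th3.4}, which already expresses every such solution as the $S$--fraction \eqref{20.3.14} with atoms $(m_j^{(d_0)},l_j^{(d_0)})$ determined by \eqref{eq:}--\eqref{20.3.10}; this provides the continued--fraction side of the statement. To pass from the continued fraction to the linear--fractional form \eqref{20.3.prp3.8}, I would define the $d_0$--Stieltjes polynomials $P^+_{j,d_0}$, $Q^+_{j,d_0}$ by the recursion \eqref{20.3.14xz1} with initial data \eqref{20.3.14xz2} as in Definition \ref{20p. def3.4}. A standard induction on $j$ (identical to the one--variable case in \cite[Theorem 5.2]{K17}) shows that the $j$--th convergent of the $S$--fraction \eqref{20.3.14} equals $Q^+_{j,d_0}/P^+_{j,d_0}$, and folding in the free parameter $\tau$ in the last block produces \eqref{20.3.prp3.8}.

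For the resolvent matrix part I would use the recursion \eqref{20.3.14xz1} once more, but read at the matrix level: the elementary step from $(P^+_{2j-1,d_0},Q^+_{2j-1,d_0})$ to $(P^+_{2j,d_0},Q^+_{2j,d_0})$ is exactly right multiplication by $L_{j,d_0}$, while the step from $(P^+_{2j-2,d_0},Q^+_{2j-2,d_0})$ to $(P^+_{2j-1,d_0},Q^+_{2j-1,d_0})$ is right multiplication by $M_{j,d_0}$, with both $L_{j,d_0}$ and $M_{j,d_0}$ as in \eqref{20.3.prp3.5.3}. Iterating from the initial pair $(1,0),(0,1)$ and stopping after $N$ full $M$--$L$ cycles (this is the difference from the odd case, which stopped one factor earlier) yields the factorization \eqref{20.3.prp3.8.2} and identifies the product with the $2\times 2$ matrix in \eqref{20.3.prp3.8.1}. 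The representation \eqref{20.3.prp3.8} is then the action of $W_{2N,d_0}$ as a linear--fractional transformation on the parameter $\tau$.

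The only genuinely delicate point is bookkeeping the indices at the terminal block, so that the final factor is $L_{N,d_0}$ rather than $M_{N,d_0}$, and the residual parameter enters as $\tau$ rather than $1/\tau$; this is dictated by the asymptotics \eqref{20.3.15}, which forces $\tau=o(1)$ and hence $\tau$ to be additively combined with $l_N^{(d_0)}$ in the innermost block of \eqref{20.3.14}. Once this parity issue is tracked carefully, \cite[Theorem 5.2]{K17} applies verbatim to $\mathfrak s^{(d_0)}$ and gives \eqref{20.3.prp3.8}--\eqref{20.3.prp3.8.2}, completing the proof.
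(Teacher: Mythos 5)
Your proposal is correct and follows essentially the same route as the paper: reduce via \eqref{20.3.1} and $z=\prod_{i=1}^n z_i$ to the one--variable expansion \eqref{20.3.qqw_1} and then apply the one--dimensional even-case result of \cite{K17} (you additionally re-derive the convergent identity and the $M$--$L$ factorization from the recursion \eqref{20.3.14xz1}, which the paper simply cites). The only slip is the final citation: the even case is \cite[Theorem 5.5]{K17}, not Theorem 5.2 (the odd case used in Proposition \ref{20p.prop3.7}), but since you explicitly track the terminal $L_{N,d_0}$ block and the parameter entering as $\tau$ rather than $1/\tau$, this does not affect the argument.
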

 %%%%%%%%%%%%%%%%%%%%%%%%%%
\begin{proof} Assume $\mathbf{s}=\{s_{i_1,i_2,...,i_n}\}_{i_1,...,i_n=0}^{2\mu_N^{(d_0)}-1}$ satisfies \eqref{20.3.1}, $s^{d_0}=\{\mathfrak s^{(d_0)}_i\}_{i=0}^{2\mu_N^{(d_0)}-1}$ is  $d_0$-associated sequence of $\mathbf{s}$ and  $\mathcal{N}({\mathfrak s^{(d_0)}})=\{\nu_j^{(d_0)}\}_{j=1}^N\cup\{\mu_j^{(d_0)}\}_{j=1}^{N}$ is the  set  of normal indices of  $\mathfrak s^{(d_0)}$. Consequently, $F_{d_0}$ admits \eqref{20.3.qqw_1}. By \cite[Theorem 5.5]{K17}, we obtain \eqref{20.3.prp3.8}--\eqref{20.3.prp3.8.2}.  This completes the proof.~\end{proof}

%%%%%%%%%%%%%%%%%%%%%%%%%%%%%%%%%%%%%%%%%%%%%%%%%%%%%%%%%%
\subsection{$d_0$-diagonal full problem} Let us formulate  $d_0$-diagonal full problem $\bold{MP}(\mathbf{s})$:
 
  Given  the sequence of real numbers  $\mathbf{s}=\{s_{i_1,i_2,...,i_n}\}_{i_1,...,i_n=0}^{\infty}$, such that \eqref{20.3.1} holds. Describe the set of functions $F_{d_0}$, which admits the following asymptotic expansion
\begin{equation}\label{20.3.eert_3}
	F_{d_0}(z_1,...,z_n)=-\cfrac{s_{0,...,0}}{\prod\limits_{i=1}^n z_i}-\cfrac{\binom{n}{1,...,1}s_{1,...,1}}{\prod\limits_{i=1}^n z_i^{2}}-
	\cfrac{\binom{2n}{2,...,2}s_{2,...,2}}{\prod\limits_{i=1}^n z_i^{3}}-\ldots.
\end{equation}

\begin{proposition}\label{20pxprop3.2} Let the sequence of real numbers  $\mathbf{s}=\{s_{i_1,i_2,...,i_n}\}_{i_1,...,i_n=0}^{\infty}$ satisfy \eqref{20.3.1}. Let  $\mathfrak s^{(d_0)}=\left\{\mathfrak s^{(d_0)}_i\right\}_{i=0}^{\infty}$ be  $d_0$-associated sequence of $\mathbf{s}$ and   regular sequence, let $\mathcal{N}\left({\mathfrak s^{(d_0)}}\right)$ be the set of normal indices  of $\mathfrak s^{(d_0)}$. Assume in addition $l_j^{(d_0)}>0$ for all $j\in \mathbb{N}$.
Then:
   \begin{enumerate}
 \item Any solution of   $d_0$-diagonal full problem $\bold{MP}(\mathbf{s})$ admits the representation
\begin{equation}\label{20.3.eert_1}
	F_{d_0}(z_1,...z_n)=\underset{i=0}{\overset{\infty}{\mathbf{K}}}\left(-\cfrac{b_{i}^{(d_0)}}{a_{i}^{(d_0)}(z_1,...,z_n)}\right),
	\end{equation}
where the atoms $(a_i^{(d_0)}, b_i^{(d_0)})$ are defined by~\eqref{20.3.7}.

 \item  $d_0$-diagonal  full problem  $\bold{MP}(\mathbf{s})$  is the indeterminate if and only if 
 \begin{equation}\label{20.3.eert_2}
 	\sum_{i=0}^{\infty}\!\!\cfrac{\left|P_{i}^{(d_0)}(0,...,0)\right|^2}{\tilde{b}_i^{(d_0)}}<\infty\quad\mbox{and}\quad 
	\sum_{i=0}^{\infty}\!\!\cfrac{\left|Q_{i}^{(d_0)}(0,...,0)\right|^2}{\tilde{b}_i^{(d_0)}}<\infty, \, \tilde{b}_i^{(d_0)}=b_0^{(d_0)}\cdot...\cdot  b_i^{(d_0)}.
 	\end{equation}
\end{enumerate}

\end{proposition}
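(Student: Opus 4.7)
The plan is to reduce the multidimensional $d_0$-diagonal full problem to a one--dimensional full Stieltjes moment problem and then invoke the classical one-dimensional theory. As in the proofs of Propositions~\ref{20.th3.3} and~\ref{20.th3.4}, the key observation is that once $\mathbf{s}$ satisfies \eqref{20.3.1}, the associated function $F_{d_0}$ is really a function of the single variable $z=\prod_{i=1}^n z_i$ whose asymptotic expansion at $\infty$ is governed exactly by the $d_0$-associated sequence $\mathfrak s^{(d_0)}=\{\mathfrak s^{(d_0)}_i\}_{i=0}^{\infty}$; see \eqref{20.3.4}. Hence describing $F_{d_0}$ is equivalent to describing the set of functions admitting the one-dimensional asymptotic expansion \eqref{20.3.4} with the infinite sequence $\mathfrak s^{(d_0)}$.

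For part (1), I would proceed by iterating the basic step of the Schur algorithm from Theorem~\ref{14p.int.th_ind}, applied to $\mathfrak s^{(d_0)}$. Each iteration produces the next atoms $(a_i^{(d_0)},b_i^{(d_0)})$ given by \eqref{20.3.7}, where $b_i^{(d_0)}=\mathfrak s^{(d_0,i)}_{n^{(d_0)}_i-n^{(d_0)}_{i-1}-1}$. Since $\mathfrak s^{(d_0)}$ is regular, at every stage $\nu_j^{(d_0)}=\mu_j^{(d_0)}$ and the $l$-atoms reduce to positive constants; by the assumption $l_j^{(d_0)}>0$ and the standard Stieltjes positivity $b_j^{(d_0)}>0$, each truncation of length $2n_N^{(d_0)}-1$ already has the continued fraction form \eqref{20.3.eert_1} via Propositions~\ref{20.th3.3}--\ref{20.th3.4}. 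Letting $N\to\infty$, the convergents coincide with those of the truncated problems, and the positive-atom Stieltjes continued fraction converges to a well-defined holomorphic function in $\mathbb{C}\setminus \mathbb{R}_+$ (in the variable $z=\prod z_i$); uniqueness of the asymptotic expansion shows that this limit represents every solution.

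For part (2), I would apply the classical indeterminacy criterion for the Stieltjes moment problem (Akhiezer~\cite{Akh}): the one-dimensional full Stieltjes problem with sequence $\mathfrak s^{(d_0)}$ is indeterminate if and only if both orthogonal-polynomial systems associated with the continued fraction have $\ell^2$-traces at the origin with the appropriate weights. In our notation, these weights are precisely the products $\tilde b_i^{(d_0)}=b_0^{(d_0)}\cdots b_i^{(d_0)}$, because the three-term recursion coming from \eqref{20.3.14xz1} normalizes the Stieltjes polynomials exactly in this way, and the value at $0$ of the multivariable polynomials $P_i^{(d_0)}, Q_i^{(d_0)}$ agrees with the value at $0$ of their one-variable counterparts in $z=\prod z_i$. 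Substituting into the classical criterion yields \eqref{20.3.eert_2}.

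The main obstacle, in my view, is the convergence step in part (1): one must rigorously justify passage to the limit in the truncated $P$-fraction representations and in particular show that the infinite continued fraction converges nontangentially at $\infty$ to a function in the Stieltjes class, giving all solutions. This reduces to a standard but nontrivial Stieltjes convergence argument based on positivity of the atoms and the identification of the convergents with $Q_i^{(d_0)}/P_i^{(d_0)}$ from Definition~\ref{20p. def3.4}; the second obstacle is to verify that the weights $\tilde b_i^{(d_0)}$ really coincide with the normalization constants of the orthonormal polynomials implicit in \cite{Akh}, which is a direct computation from \eqref{20.3.14xz1}--\eqref{20.3.14xz2}.
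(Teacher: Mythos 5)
Your overall route coincides with the paper's: both reduce the $d_0$-diagonal full problem to a one--dimensional problem in the single variable $z=\prod_{i=1}^n z_i$ governed by the associated sequence $\mathfrak s^{(d_0)}$, and both obtain \eqref{20.3.eert_1} by iterating the basic Schur step (Theorem \ref{14p.int.th_ind}, i.e.\ Proposition \ref{20.th3.1}) infinitely many times. The divergence is in part (2) and in the positivity claims you use along the way, and this is where there is a genuine gap. The hypotheses of the proposition only say that $\mathbf{s}$ is a sequence of \emph{real} numbers with $\mathfrak s^{(d_0)}$ regular and $l_j^{(d_0)}>0$; nothing guarantees that the Hankel determinants $D_{n}$ are positive, so this is a generalized (indefinite) Stieltjes setting. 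Your assertion of ``the standard Stieltjes positivity $b_j^{(d_0)}>0$'' is therefore unjustified: the atoms $b_j^{(d_0)}=\mathfrak s^{(d_0,j)}_{n_j^{(d_0)}-n_{j-1}^{(d_0)}-1}$ may well be negative, so the positivity-based convergence argument you propose for the continued fraction, and more importantly the appeal to the classical Akhiezer indeterminacy criterion (which presupposes a positive definite moment sequence and orthonormal polynomials), do not apply without further argument. This is precisely why the paper does not use \cite{Akh} here but instead cites \cite[Proposition 4.3]{DK20}, a result for the \emph{full indefinite} Stieltjes moment problem, which delivers criterion \eqref{20.3.eert_2} in exactly the stated form with the weights $\tilde b_i^{(d_0)}=b_0^{(d_0)}\cdots b_i^{(d_0)}$.

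Two smaller points. First, for part (1) you also lean on Propositions \ref{20.th3.3}--\ref{20.th3.4}, which produce the $S$-fraction with atoms $(m_j^{(d_0)},l_j^{(d_0)})$; the representation \eqref{20.3.eert_1} is the $P$-fraction with atoms $(a_i^{(d_0)},b_i^{(d_0)})$ from \eqref{20.3.7}, so the relevant truncated statement is Proposition \ref{20.th3.1} applied step by step to the expansion \eqref{20.3.eert_4}; mixing the two families of atoms obscures the argument. Second, the convergence of the infinite continued fraction is not what part (1) requires: the claim is that every solution of the full problem admits the (formal, step-by-step) representation obtained by infinitely many Schur steps, each step being justified by the truncated result; convergence questions belong to the indeterminacy analysis in part (2), where again the indefinite-theory results of \cite{DK20} (rather than positivity) are the tool the paper relies on. If you want to keep your Akhiezer-based argument, you would need to add the hypothesis of positive definiteness (all $D_n>0$, $D_n^+>0$) or else prove that regularity plus $l_j^{(d_0)}>0$ forces it, which is not true in general.
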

%%%%%%%%%%%%%%%%%%%%%%%%%%%%%%%%%%%%%%%%%%%%%%%%%%%%%%%%%%
\begin{proof} Let the sequence of real numbers  $\mathbf{s}=\{s_{i_1,i_2,...,i_n}\}_{i_1,...,i_n=0}^{\infty}$ satisfy \eqref{20.3.1} and let    $d_0$-associated sequence $\mathfrak s^{(d_0)}=\left\{\mathfrak s^{(d_0)}_i\right\}_{i=0}^{\infty}$   be  regular. Consequently $F_{d_0}$ admits the following asymptotic expansion
\begin{equation}\label{20.3.eert_4}
	F_{d_0}(z_1,...,z_n)=-\cfrac{s_{0,...,0}}{\prod\limits_{i=1}^n z_i}-\cfrac{\binom{n}{1,...,1}s_{1,...,1}}{\prod\limits_{i=1}^n z_i^{2}}-
	\cfrac{\binom{2n}{2,...,2}s_{2,...,2}}{\prod\limits_{i=1}^n z_i^{3}}-\ldots=-\cfrac{\mathfrak s^{(d_0)}_0}{z}-\cfrac{\mathfrak s^{(d_0)}_1}{z^2}-\ldots.
\end{equation}

According to Theorem \ref{20.th3.1} , we apply the infinite numbers of step to \eqref{20.3.eert_4} and obtain \eqref{20.3.eert_1}. By \cite[Proposition 4.3]{DK20}, we get that   $d_0$-diagonal full problem  $\bold{MP}(\mathbf{s})$  is the indeterminate iff \eqref{20.3.eert_2} holds. 
This completes the proof.~\end{proof}
%%%%%%%%%%%%%%%%%%%%%%%%%%%%%%%%%%%%%%%%%%%%%%%%%%%%%%%%%%%%%%%%%%%%%%%%%%%%%%%%%%%%%%%
\begin{theorem}\label{20.th3.6} Let the sequence of real numbers  $\mathbf{s}=\{s_{i_1,i_2,...,i_n}\}_{i_1,...,i_n=0}^{\infty}$ satisfy \eqref{20.3.1}. Let  $\mathfrak s^{(d_0)}=\left\{\mathfrak s^{(d_0)}_i\right\}_{i=0}^{\infty}$ be  $d_0$-associated sequence of $\mathbf{s}$ and   regular, let $\mathcal{N}\left({\mathfrak s^{(d_0)}}\right)$ be the set of normal indices  of $\mathfrak s^{(d_0)}$. Then:
   \begin{enumerate}
 \item Any solution of  $d_0$-diagonal full problem $\bold{MP}(\mathbf{s})$ admits the representation
\begin{equation}\label{20.3.19}
	F_{d_0}(z_1,...z_n)=\underset{i=1}{\overset{\infty}{\mathbf{K}}}\left(\cfrac{1}{-m_i^{(d_0)}(z_1,..., z_n)\prod\limits_{j=1}^nz_j+\cfrac{1}{l_i^{(d_0)}}}\right),
	\end{equation}
where the atoms $(m_i^{(d_0)}, l_i^{(d_0)})$ are defined by~\eqref{eq:}--\eqref{20.3.10}.

 \item   $d_0$-diagonal full problem  $\bold{MP}(\mathbf{s})$  is the indeterminate if and only if 
\begin{equation}\label{20.3.20}
	\sum_{i=1}^\infty m_i^{(d_0)}(0,...,0)<\infty\quad\mbox{and}\quad \sum_{i=1}^\infty l_i^{(d_0)}<\infty.
\end{equation}

\end{enumerate}

Furthermore, if \eqref{20.3.20} holds, then the sequence of resolvents matrices $W_{2n,d_0}$ converges to the  matrix valued function $W^{+}_{\infty, d_0}(z_1,...,z_n)=\left(w^+_{ij,d_0}(z_1,...,z_n)\right)_{i,j=1}^2$ and $F_{d_0}$  can be represented as follows
\begin{equation}\label{20.3.19x_1}
	F_{d_0}(z_1,...z_n)=\cfrac{w^+_{11,d_0}(z_1,...,z_n)\tau(z_1,...,z_n)+w^+_{12,d_0}(z_1,...,z_n)}{w^+_{21,d_0}(z_1,...,z_n)\tau(z_1,...,z_n)+w^+_{22,d_0}(z_1,...,z_n)},
	\end{equation}
where the parameter $\tau(z_1, ..., z_n)=o\left(1\right)$.
\end{theorem}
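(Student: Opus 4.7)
The plan is to reduce the $n$-variable problem to a one-dimensional Stieltjes moment problem via the substitution $z=\prod_{i=1}^n z_i$, and then pass to the limit in the truncated results obtained in Section~2. First I would rewrite \eqref{20.3.eert_3} in terms of the $d_0$-associated sequence $\mathfrak{s}^{(d_0)}$ and the scalar variable $z$, reproducing the one-variable asymptotic used in the proofs of Propositions~\ref{20.th3.3} and \ref{20.th3.4}. Regularity of $\mathfrak{s}^{(d_0)}$ then ensures that each truncation $\bold{MP}(\mathbf{s},2\mu^{(d_0)}_N-1)$ falls under Proposition~\ref{20.th3.4} together with Remark~\ref{20p. cor 3.5}, producing a finite continued fraction in which every $l_j^{(d_0)}$ is a constant; letting $N\to\infty$ gives the formal infinite $S$-fraction \eqref{20.3.19}, which is claim~(1).

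For claim~(2) I would invoke the classical convergence criterion for $S$-fractions associated with a regular Stieltjes moment sequence, namely \cite[Proposition~4.3]{DK20}: indeterminacy is equivalent to the simultaneous convergence of the two series of partial quotients. Transcribing this back into the $d_0$-diagonal notation via $z=\prod_i z_i$ gives precisely the pair of conditions in~\eqref{20.3.20}.

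For the convergence statement, the natural tool is the matrix factorization~\eqref{20.3.prp3.8.2},
$W_{2N,d_0}=M_{1,d_0}L_{1,d_0}\cdots M_{N,d_0}L_{N,d_0}$.
Assuming~\eqref{20.3.20}, I would show entry-wise convergence of this infinite product on nontangential sectors to a limit $W^{+}_{\infty,d_0}=(w^{+}_{ij,d_0})_{i,j=1}^{2}$; the representation~\eqref{20.3.19x_1} then follows by taking the limit in the linear-fractional formula~\eqref{20.3.prp3.8} and describing the admissible parameter set. The main obstacle will be controlling the partial products uniformly: the intended strategy is to estimate the deviation $\|M_{j,d_0}(z_1,\ldots,z_n)L_{j,d_0}(z_1,\ldots,z_n)-I\|$ in terms of $m_{j}^{(d_0)}(0,\ldots,0)$ and $l_{j}^{(d_0)}$ on compact subsets of the nontangential domain, after which~\eqref{20.3.20} feeds into a Weierstrass-type $M$-test for infinite matrix products. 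A secondary subtlety is verifying that, in the limit, the admissible parameters $\tau$ are exactly those with $\tau(z_1,\ldots,z_n)=o(1)$, which requires transferring the nontangential asymptotic back from the $z$-variable to the several-variable setting through the substitution $z=\prod_i z_i$.
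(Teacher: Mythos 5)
Your plan follows essentially the same route as the paper: rewrite the expansion in the single variable $z=\prod_{i=1}^n z_i$, apply the truncated results of Section~2 (Propositions~\ref{20.th3.3}/\ref{20.th3.4} with infinitely many steps) to get \eqref{20.3.19}, and invoke the known one-dimensional indeterminacy criterion for \eqref{20.3.20}. The only divergence is at the end: where you propose to prove convergence of the infinite product $M_{1,d_0}L_{1,d_0}M_{2,d_0}L_{2,d_0}\cdots$ by direct estimates, the paper simply cites \cite[Theorem 4.2]{DK20} for the convergence of $W_{2n,d_0}$ and the representation \eqref{20.3.19x_1} (and \cite[Theorem 5.3]{DK17}, rather than \cite[Proposition 4.3]{DK20}, for the criterion \eqref{20.3.20}), so your extra work re-derives what is quoted.
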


\begin{proof} Let  $\mathbf{s}=\{s_{i_1,i_2,...,i_n}\}_{i_1,...,i_n=0}^{\infty}$ satisfy \eqref{20.3.1} and let $\mathfrak s^{(d_0)}=\left\{\mathfrak s^{(d_0)}_i\right\}_{i=0}^{\infty}$  be its $d_0$-associated sequence, such that $\mathfrak s^{(d_0)}$ is the regular sequence. Consequently  any solution of $d_0$-diagonal full problem $\bold{MP}(\mathbf{s})$ has got the asymptotic expansion \eqref{20.3.eert_3}

By Proposition \ref{20.th3.3}  ( or Proposition \ref{20.th3.4})  with the  infinite number of steps, we obtain representation \eqref{20.3.19}, where the the atoms $(m_i^{(d_0)}, l_i^{(d_0)})$  can be calculated by similar formulas ~\eqref{eq:}--\eqref{20.3.10}. Due to \cite[Theorem 5.3]{DK17}, $d_0$-diagonal full problem $\bold{MP}(\mathbf{s})$  is indeterminate if and only if  \eqref{20.3.20} holds.  After that, by \cite[Theorem 4.2]{DK20}, we obtain the convergence of the sequence $W_{2n,d_0}$  to   the  matrix valued function $W^{+}_{\infty, d_0}(z_1,...,z_n)=\left(w^+_{ij,d_0}(z_1,...,z_n)\right)_{i,j=1}^2$ and representation  \eqref{20.3.19x_1}.
This completes the proof.~\end{proof}

%%%%%%%%%%%%%%%%%%%%%%%%%%%%%%%%%%%%%%%%%%%%%%%%%%%%%%%%%%%%%%%%%%%%%%%%%%%%%%%%%%%%%%%
\section{$d_{j_1,...,j_n}$-diagonal types}

In the present section,  we study the case, when  all moments not belonging to a fixed subdiagonal  of the Hankel matrix are equal to zero. In the current case,  we get a situation similar to the previous section. Of course, one is the basic problem for the general problem.

First of all, we define a subdiagonal sequence and formulate associated problem.

\begin{definition}\label{20p.def4.1}Let $\mathbf{s}=\{s_{i_1,i_2,...,i_n}\}_{i_1,...,i_n=0}^{\ell}$ be a sequence of real numbers and let  us set
\begin{equation}\label{20p.4.1}
\mathfrak{s}^{(d_{j_1,...,j_n})}_{j}=\binom{j\times n+\sum\limits_{k=1}^nj_k}{j_1+j,...,j_n+j}s_{j_1+j,...,j_n+j},
\end{equation}
then $\mathfrak{s}^{(d_{j_1,...,j_n})}=\left\{\mathfrak{s}^{(d_{j_1,...,j_n})}_{i}\right\}_{i=0}^{\ell}$   is called  $d_{j_1,...,j_n}$ -- associated sequence of $\mathbf{s}$ if  there is at least one $j_k=0$.

It is the special case of the sequence   $\mathbf{s}=\{s_{i_1,i_2,...,i_n}\}_{i_1,...,i_n=0}^{\ell}$, where  the moments $s_{i_1,i_2,...,i_n}$ satisfy the following 
\begin{equation}\label{20p.4.1Xqw}
	\mbox{if for at least one index} \, i_k\neq j_j+j \,\mbox{holds, then}\,\,s_{i_1,i_2,...,i_n}=0,
\end{equation}
i.e. if the moment $s_{i_1,i_2,...,i_n}$ does not belong to $d_{j_1,...,j_n}$ -- associated sequence of $\mathbf{s}$, then $s_{i_1,i_2,...,i_n}=0$. 

In this case, the associated function is  denoted by $F_{d_{j_1,...,j_n}}$,. i.e. the function  $F_{d_{j_1,...,j_n}}$  is the associated with the sequence  $\mathfrak{s}^{(d_{j_1,...,j_n})}=\left\{\mathfrak{s}^{(d_{j_1,...,j_n})}_{i}\right\}_{i=0}^{\ell}$ and  $F_{d_{j_1,...,j_n}}$ takes the following asymptotic expansion 
\begin{equation}\label{20p.4.2}
	F_{d_{j_1,...,j_n}}(z_1,...,z_n)\!=\!-\cfrac{\mathfrak{s}^{(d_{j_1,...,j_n})}_{0}}{\prod\limits_{i=1}^n z_i^{j_i+1}}\!-\!\cfrac{\mathfrak{s}^{(d_{j_1,...,j_n})}_{1}}{\prod\limits_{i=1}^n z_i^{j_i+2}}\!-\!\ldots
	\!-\!\cfrac{\mathfrak{s}^{(d_{j_1,...,j_n})}_{\ell}}{\prod\limits_{i=1}^n z_i^{j_i+\ell+1}}+o\!\left(\cfrac{1}{\prod\limits_{i=1}^n z_i^{j_i+\ell+1}
}\right).
\end{equation}
\end{definition}
 On the other hand, by \eqref{20p.4.1}, representation  \eqref{20p.4.2}  can be  rewritten as follows 
\begin{equation}\label{20p.4.3}
	F_{d_{j_1,...,j_n}}(z_1,...,z_n)=\cfrac{1}{\prod\limits_{i=1}^n z_i^{j_i}}\left(
	-\cfrac{\mathfrak{s}^{(d_{j_1,...,j_n})}_{0}}{\prod\limits_{i=1}^n z_i}-\ldots
	-\cfrac{\mathfrak{s}^{(d_{j_1,...,j_n})}_{\ell}}{\prod\limits_{i=1}^n z_i^{\ell+1}}+o\left(\cfrac{1}{\prod\limits_{i=1}^n z_i^{\ell+1}
}\right)
	\right).
\end{equation}

\textbf{$d_{j_1,...,j_n}$-diagonal truncated   problem $\bold{MP}(\mathbf{s},\ell)$ can be formulated as:}

Given the sequence of real numbers  $\mathbf{s}=\{s_{i_1,i_2,...,i_n}\}_{i_1,...,i_n=0}^\ell$ , which satisfies \eqref{20p.4.1Xqw} and $j_1,...,j_n \in \mathbb{Z}_{+}$ are fixed, such that at least one of them equals to zero (i.e. there exist some $j_k=0$) and  $\mathfrak{s}^{(d_{j_1,...,j_n})}=\left\{\mathfrak{s}^{(d_{j_1,...,j_n})}_{i}\right\}_{i=0}^{2n^{(d_{j_1,...,j_n})}_N-1}$ is  $d_{j_1,...,j_n}$-associated sequence of $\mathbf{s}$.  Describe the set of functions $F_{d_{j_1,...,j_n}}$, which  have the asymptotic expansion \eqref{20p.4.2}.

\begin{remark}By representation \eqref{20p.4.3}, we will obtain the similar results to $F_{d_0}$.\end{remark}
%%%%%%%%%%%%%%%%%%%%%%%%%%%%%%%%%%%%%%%%%%%%%%%%%%%%%%%%%%%%%%
 \subsection{Odd $d_{j_1,...,j_n}$-diagonal truncated  problem}

%%%%%%%%%%%%%%%%%%%%%%%%%%%%%%%%%%%%%%%%%%%%%%%%%%%%%%%%%%%%%%%

\begin{proposition}\label{20.th4.3} Let $\mathbf{s}=\{s_{i_1,i_2,...,i_n}\}_{i_1,...,i_n=0}^{2\nu_N^{(d_{j_1,...,j_n})}-2}$ be the sequence of real numbers   and let $j_1,...,j_n \in \mathbb{Z}_{+}$ be fixed such that \eqref{20p.4.1Xqw} holds and  $\mathfrak{s}^{(d_{j_1,...,j_n})}=\left\{\mathfrak{s}^{(d_{j_1,...,j_n})}_{i}\right\}_{i=0}^{2n^{(d_{j_1,...,j_n})}_N-1}$ is  $d_{j_1,...,j_n}$-associated sequence of $\mathbf{s}$. Let   $\mathcal{N}\left({\mathfrak s^{(d_{j_1,...,j_n})}}\right)=\left\{\nu_j^{(d_{j_1,...,j_n})}\right\}_{j=1}^N\cup\left\{\mu_j^{(d_{j_1,...,j_n})}\right\}_{j=1}^{N-1}$ be the  set of normal indices of $\mathfrak s^{(d_{j_1,...,j_n})}$. Then any solution of  $d_{j_1,...,j_n}$-diagonal truncated problem $\bold{MP}\left(\mathbf{s},2\nu^{(d_{j_1,...,j_n})}_N-2\right)$ admits the representation
 \begin{equation}\label{20.4.7}\begin{split}
 	F_{d_{j_1,...,j_n}}\!(\!z_1,..., \!z_n\!)\!=\!&\cfrac{1}{\prod\limits_{i=1}^n z_i^{j_i}}\!\cdot\!\cfrac{1\big|}{\big|\!-\!m_1^{(d_{j_1,...,j_n})}\!(\!z_1,..., \!z_n\!)\!\!\prod\limits_{j=1}^n\!z_j}\!+\!\cfrac{1\big|}{\big|l_1^{(d_{j_1,...,j_n})}\!(\!z_1,...,\!z_n\!)}\!+\!\ldots\!+\!\\&+\cfrac{1\big|}{\big| -m_N^{(d_{j_1,...,j_n})}(z_1,..., z_n)\prod\limits_{j=1}^nz_j+\cfrac{1}{\tau(z_1,...,z_n)}},
	\end{split}
 \end{equation}
 where  the parameter $\tau$ satisfy the following condition
  \begin{equation}\label{20.4.8}
  \cfrac{1}{\tau(z_1, ..., z_n)}=o\left(\prod\limits_{j=1}^nz_j\right),
   \end{equation}
  the atoms $(m_j^{(d_{j_1,...,j_n})} , l^{(d_{j_1,...,j_n})}_j)$ can be calculated by
 \begin{equation}\label{eq4.9:}\begin{split}
       & m_j^{(d_{j_1,...,j_n})}(z_1,..., z_n)=\frac{(-1)^{\nu^{(d_{j_1,...,j_n})}+1}}{D^{(d_{j_1,...,j_n},2j-1)}_{\nu^{(d_{j_1,...,j_n})}}}\times\\&
       \times \begin{vmatrix}
            0 & \ldots & 0 &  \mathfrak s^{(d_0,2j-1)}_{\nu^{(d_{j_1,...,j_n})}-1} &  \mathfrak s^{(d_{j_1,...,j_n},2j-1)}_{\nu^{(d_{j_1,...,j_n})}} \\
            \vdots &  & \ldots & \ldots & \vdots \\
            \mathfrak s^{(d_{j_1,...,j_n},2j-1)}_{\nu^{(d_{j_1,...,j_n})}-1} &\mathfrak s^{(d_{j_1,...,j_n},2j-1)}_{\nu^{(d_{j_1,...,j_n})}} & \ldots & \ldots & \mathfrak s^{(d_{j_1,...,j_n},2j-1)}_{2\nu^{(d_{j_1,...,j_n})}-2} \\
            1 & \prod\limits_{j=1}^nz_j & \ldots& \prod\limits_{j=1}^nz_j^{\nu^{(d_{j_1,...,j_n})}-2}& \prod\limits_{j=1}^nz_j^{\nu^{(d_{j_1,...,j_n})}-1} \\
        \end{vmatrix},
\end{split}\end{equation}

\begin{equation}\label{20.4.10}\begin{split}
 &\mbox{if}\quad  \nu^{(d_{j_1,...,j_n})}_j=\mu^{(d_{j_1,...,j_n})}_j,\mbox{then}\\& l_j^{(d_{j_1,...,j_n})}(z_1,..., z_n)
   = (-1)^{\nu^{(d_{j_1,...,j_n})}
    +1} \mathfrak s_{\nu^{(d_{j_1,...,j_n})}-1}^{(d_{j_1,...,j_n},2j-1)}\frac{D_{\nu^{(d_{j_1,...,j_n})}}^{(d_{j_1,...,j_n},2j-1)}}{D_{\nu^{(d_{j_1,...,j_n})}}^{(d_{j_1,...,j_n},2j-1)+}};\\&
    \mbox{if}\quad  \nu^{(d_{j_1,...,j_n})}_j<\mu^{(d_{j_1,...,j_n})}_j,\mbox{then}\\& 
l_j^{(d_{j_1,...,j_n})}(z_1,..., z_n)
   = \! C\!
    \begin{vmatrix}
    { \mathfrak{s}}^{(d_{j_1,...,j_n},2j)}_{0} & \ldots & { \mathfrak{s}}^{(d_{j_1,...,j_n},2j)}_{\mu^{(d_{j_1,...,j_n})}-1} & { \mathfrak{s}}^{(d_{j_1,...,j_n},2j)}_{\mu^{(d_{j_1,...,j_n})}} \\
    \cdots & \cdots & \cdots & \cdots \\
    { \mathfrak{s}}^{(d_{j_1,...,j_n},2j)}_{\mu^{(d_{j_1,...,j_n})}-1} & \ldots & { \mathfrak{s}}^{(d_{j_1,...,j_n},2j)}_{2\mu^{(d_{j_1,...,j_n})}-2} & { \mathfrak{s}}^{(d_{j_1,...,j_n},2j)}_{2\mu^{(d_{j_1,...,j_n})}-1} \\
    1 & \ldots & \prod\limits_{j=1}^n\!z_j^{\mu^{(d_{j_1,...,j_n})}-1} & \prod\limits_{j=1}^n\!z_j^{\mu^{(d_{j_1,...,j_n})}} \\
    \end{vmatrix},\\
\end{split}\end{equation}
where the constant $C$ is defined by 
\[C=  \frac{1}{{ \mathfrak{s}}^{(d_{j_1,...,j_n},2j)}_{\mu^{(d_{j_1,...,j_n})}-1}{ D}^{(d_{j_1,...,j_n},2j)}_{\mu^{(d_{j_1,...,j_n})}}}\]
 and  the recurrence sequence $\mathfrak{s}^{(d_{j_1,...,j_n},2j)}=\left\{\mathfrak{s}^{(d_{j_1,...,j_n},2j)}_{i}\right\}_{i=-1}^{2\left(\nu^{(d_{j_1,...,j_n})}_N-\nu^{(d_{j_1,...,j_n})}_j\right)-2}$ and
$\mathfrak{s}^{(d_{j_1,...,j_n},2j-1)}=\left\{\mathfrak s^{(d_{j_1,...,j_n},2j-1)}_{i}\right\}_{i=0}^{2\left(\nu^{(d_{j_1,...,j_n})}_N-\mu^{(d_{j_1,...,j_n})}_j\right)-2}$ can be  found by  
\begin{equation}\label{20.4.11}\begin{split}&
    T(m_{\nu^{(d_{j_1,...,j_n})}_j-1}^{(d_{j_1,...,j_n},j)},\ldots,m_0^{(d_{j_1,...,j_n},j)},-{\mathfrak s}^{(d_{j_1,...,j_n},2j)}_{-1},\ldots,-
 {\mathfrak s}^{(d_{j_1,...,j_n},2j)}_{\ell^{(d_{j_1,...,j_n})}_j-2\nu^{(d_{j_1,...,j_n})}_j}) \times\\&\times T(\mathfrak s^{(d_{j_1,...,j_n},2j-1)}_{\nu^{(d_{j_1,...,j_n})}_j-1},\dots,\mathfrak s^{(d_{j_1,...,j_n},2j-1)}_{\ell^{(d_{j_1,...,j_n})}_j}) =
 I_{\ell^{(d_{j_1,...,j_n})}_j-\nu^{(d_{j_1,...,j_n})}_j+2},\end{split}
\end{equation}
where $ \ell_{j}^{(d_{j_1,...,j_n})}=2\nu_N^{(d_{j_1,...,j_n})}-2\mu^{(d_{j_1,...,j_n})}_{j-1}-2$.
\begin{equation}\label{20.4.12}\begin{split}&
     T(l_{\mu^{(d_{j_1\!,\!...\!,\!j_n}\!)}_j\!-\!\nu^{(\!d_{j_1\!,\!...\!,\!j_n}\!)}_j}^{(\!d_{j_1\!,\!...\!,\!j_n},j)},\!\ldots\!,\!l_0^{(\!d_{\!j_1\!,\!...\!,\!j_n\!},\!j\!)},\!-\!{\mathfrak s}^{(d_{j_1,...,j_n},2j+1)}_{0},\ldots,\!-\!
 {\mathfrak s}^{(\!d_{\!j_1\!,...\!,\!j_n},2j\!+\!1)}_{2\!\nu_N^{(\!d_{j_1\!,\!...\!,\!j_n\!})}\!-\!2\mu^{(\!d_{\!j_1\!,\!...\!,\!j_n}\!)}_j\!-\!2})\!\times\\&\times T({\mathfrak s}^{(d_{j_1,...,j_n},2j)}_{\mu^{(d_{j_1,...,j_n})}_j-\nu^{(d_{j_1,...,j_n})}_j-1},\dots,{\mathfrak {s}}^{(d_{j_1,...,j_n},2j)}_{\ell^{(d_{j_1,...,j_n})}_j-2\nu^{(d_{j_1,...,j_n})}_j}) =\\&= I_{2\nu^{(d_{j_1,...,j_n})}_N-\mu^{(d_{j_1,...,j_n})}_j-\nu^{(d_{j_1,...,j_n})}_j},
\end{split}\end{equation}
where 
$\nu^{(d_{j_1,...,j_n})}=\nu^{(d_{j_1,...,j_n})}_j-\mu^{(d_{j_1,...,j_n})}_{j-1}$, $\mu^{(d_{j_1,...,j_n})}=\mu^{(d_{j_1,...,j_n})}_{j}-\nu^{(d_{j_1,...,j_n})}_j$  and $\mathfrak s_i^{(d_{j_1,...,j_n}, 1)}=\mathfrak s_i^{(d_{j_1,...,j_n})}$.
\end {proposition}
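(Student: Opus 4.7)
The plan is to reduce the $d_{j_1,\ldots,j_n}$-diagonal odd truncated problem to the already-solved $d_0$-diagonal odd problem (Proposition \ref{20.th3.3}) by exploiting the factorization indicated in equation \eqref{20p.4.3}. The whole work of the section is really to verify that the scalar prefactor $1/\prod z_i^{j_i}$ is the only extra ingredient introduced when $(j_1,\ldots,j_n)\neq(0,\ldots,0)$.

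First, I would start from the hypothesis \eqref{20p.4.1Xqw} and the definition \eqref{20p.4.1} of the $d_{j_1,\ldots,j_n}$-associated sequence, and rewrite the asymptotic expansion \eqref{20p.4.2} in the factored form \eqref{20p.4.3}. Setting $G(z_1,\ldots,z_n):=\bigl(\prod_{i=1}^n z_i^{j_i}\bigr)F_{d_{j_1,\ldots,j_n}}(z_1,\ldots,z_n)$, the function $G$ satisfies exactly the $d_0$-type asymptotic expansion \eqref{20.3.14ddsz1} of order $2\nu_N^{(d_{j_1,\ldots,j_n})}-1$ with respect to the single variable $z=\prod_{i=1}^n z_i$, built from the moments $\mathfrak s_i^{(d_{j_1,\ldots,j_n})}$. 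In particular, the set of normal indices and the whole combinatorics of $\nu_j,\mu_j$ are defined directly in terms of this sequence, so they transfer verbatim.

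Second, I would apply Proposition \ref{20.th3.3} to $G$. This yields a representation of $G$ as the $S$-type continued fraction \eqref{20.3.8} whose atoms $(m_j^{(d_0)},l_j^{(d_0)})$ are given by \eqref{eq:}--\eqref{20.3.10} evaluated on the sequence $\mathfrak s^{(d_{j_1,\ldots,j_n})}$; after renaming these atoms as $(m_j^{(d_{j_1,\ldots,j_n})},l_j^{(d_{j_1,\ldots,j_n})})$, formulas \eqref{eq4.9:}--\eqref{20.4.12} are exactly those of Proposition \ref{20.th3.3}. Dividing through by $\prod z_i^{j_i}$ then produces the representation \eqref{20.4.7}, where the factor $1/\prod z_i^{j_i}$ is placed in front of the continued fraction as displayed.

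Third, I would translate the parameter condition. Proposition \ref{20.th3.3} gives $1/\tau(z_1,\ldots,z_n)=O(1)$ for the parameter of $G$; since $F_{d_{j_1,\ldots,j_n}}=G/\prod z_i^{j_i}$, the same $\tau$ serves for $F_{d_{j_1,\ldots,j_n}}$, and one only needs to observe that $O(1)=o(\prod_{j=1}^n z_j)$ as $z_j\widehat{\rightarrow}\infty$, which yields \eqref{20.4.8}. The main (though routine) obstacle is bookkeeping: checking that every index $\nu^{(d_{j_1,\ldots,j_n})}$, $\mu^{(d_{j_1,\ldots,j_n})}$ and every recurrence \eqref{20.4.11}--\eqref{20.4.12} is the literal image under $\mathfrak s^{(d_0)}\mapsto\mathfrak s^{(d_{j_1,\ldots,j_n})}$ of the corresponding object in \eqref{20.3.11}--\eqref{20.3.12}, so that the application of Proposition \ref{20.th3.3} is legitimate. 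Once this matching is verified, the conclusion follows, completing the proof.
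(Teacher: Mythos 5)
Your proposal is correct and follows essentially the same route as the paper: the paper's proof likewise rewrites the expansion in the factored form \eqref{20p.4.3}, applies Proposition \ref{20.th3.3} to the bracketed ($d_0$-type) part built from $\mathfrak s^{(d_{j_1,\ldots,j_n})}$ in the variable $z=\prod_{i=1}^n z_i$, and reinstates the prefactor $1/\prod_{i=1}^n z_i^{j_i}$ to obtain \eqref{20.4.7}--\eqref{20.4.12}. Your extra remark that the parameter condition $1/\tau=O(1)$ from \eqref{20.3.9} implies \eqref{20.4.8} is a sensible piece of bookkeeping that the paper leaves implicit.
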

%%%%%%%%%%%%%%%%%%%%%%%%%%%%%%%%%%%%%%%%%%

\begin{proof}  Let $\mathbf{s}=\{s_{i_1,i_2,...,i_n}\}_{i_1,...,i_n=0}^\ell$  be the sequence of real numbers and let  $j_1,...,j_n \in \mathbb{Z}_{+}$ be fixed, such that at least one of them  wanishes and \eqref{20p.4.1Xqw} holds. Hence, the any solutions of $\bold{MP}\left(\mathbf{s},2\nu^{(d_{j_1,...,j_n})}_N-2\right)$  is associated with the  sequence $\mathfrak{s}^{(d_{j_1,...,j_n})}=\left\{\mathfrak{s}^{(d_{j_1,...,j_n})}_{i}\right\}_{i=0}^{2\nu^{(d_{j_1,...,j_n})}_N-2}$  and admits the following asymptotic expansion
\begin{equation*}\label{20p.4.3.cx3}
	F_{d_{j_1,...,j_n}}(z_1,...,z_n)\!=\!\cfrac{1}{\prod\limits_{i=1}^n \!z_i^{j_i}}\!\left(
	\!-\!\cfrac{\mathfrak{s}^{(d_{j_1,...,j_n})}_{0}}{\prod\limits_{i=1}^n \!z_i}\!-\!\ldots
	\!-\cfrac{\mathfrak{s}^{(d_{j_1,...,j_n})}_{2\nu^{(d_{j_1,...,j_n})}_N\!-\!2}}{\prod\limits_{i=1}^n \!z_i^{2\nu^{(d_{j_1,...,j_n})}_N\!-\!1}}\!+\!o\!\!\left(\!\!\cfrac{1}{\prod\limits_{i=1}^n \!z_i^{2\nu^{(d_{j_1,...,j_n})}_N\!-\!1}
}\!\right)
	\!\!\!\right).
\end{equation*}
By Theorem \ref{20.th3.3},
\[
	\!-\!\cfrac{\mathfrak{s}^{(d_{j_1,...,j_n})}_{0}}{\prod\limits_{i=1}^n \!z_i}\!-\!\ldots
	\!-\cfrac{\mathfrak{s}^{(d_{j_1,...,j_n})}_{2\nu^{(d_{j_1,...,j_n})}_N\!-\!2}}{\prod\limits_{i=1}^n \!z_i^{2\nu^{(d_{j_1,...,j_n})}_N\!-\!1}}\!+\!o\left(\!\!\cfrac{1}{\prod\limits_{i=1}^n \!z_i^{2\nu^{(d_{j_1,...,j_n})}_N\!-\!1}
}\right)
\]
can be represented as
\[\begin{split}
 	&\cfrac{1\big|}{\big|-m_1^{(d_{j_1,...,j_n})}(z_1,..., z_n)\prod\limits_{j=1}^nz_j}+\cfrac{1\big|}{\big|l_1^{(d_{j_1,...,j_n})}(z_1,...,z_n)}+\ldots+\\&+\cfrac{1\big|}{\big| -m_N^{(d_{j_1,...,j_n})}(z_1,..., z_n)\prod\limits_{j=1}^nz_j+\cfrac{1}{\tau(z_1,...,z_n)}},
	\end{split}
\]
where the atom $\left(m_j^{(d_{j_1,...,j_n})} , l^{(d_{j_1,...,j_n})}_j\right)$  can be found by the same formulas as in the Theorem \ref{20.th3.3}, i.e.  \eqref{eq4.9:}--\eqref{20.4.12} hold. Consequently, we obtain  \eqref{20.4.7}--\eqref{20.4.12}. This completes the proof.~\end{proof}

%%%%%%%%%%%%%%%%%%%%%%%%%%%%%%%%%%%%%%%%%%%%%%%%%%%%%%%%%%%%%%%
\begin{remark}\label{20p. cor 3.3} If  $d_{j_1,...,j_n}$-associated sequence $\mathfrak s^{(d_{j_1,...,j_n})}=\left\{\mathfrak s^{(d_{j_1,...,j_n})}_i\right\}_{i=0}^{2\nu_N^{(d_{j_1,...,j_n})}-2}$  is the regular sequence, then  all $l_j^{(d_{j_1,...,j_n})}=$const. Furthermore,
 \begin{equation}\label{20.4.14x1}
	l_j^{(d_{j_1,...,j_n})}
   =   \frac{1}{{\mathfrak{s}}_{-1}^{(d_0,2j)}}\qquad\mbox{for all } j=\overline{1,N-1}.
   \end{equation}
\end{remark}

By the similar way we can define the Stieltjes polynomials for $d_{j_1,...,j_n}-$diagonal (see Definition \ref{20p. def3.4}):

%%%%%%%%%%%%%%%%%%%%%%%%%%%%%%%%%%%%%%%%%%%%%%%%%%%%%%%%%%%%%%%%%%%%%%%%%%%%%%%%%%%%%%%
\begin{definition}\label{20p. def4.4} (\cite{K17}) The polynomials $P_{j,d_{j_1,...,j_n}}^{+}$ and $Q_{j,d_{j_1,...,j_n}}^{+}$ can be defined  as the solutions of the following system
 \begin{equation}\label{20.4.14xz1}
   \left\{
    \begin{array}{rcl}
    y_{2j,d_{j_1,...,j_n}}-y_{2j-2,d_{j_1,...,j_n}}=l^{(d_{j_1,...,j_n})}_{j}(z_1,...,z_n)y_{2j-1,d_{j_1,...,j_n}},\\
    y_{2j+1,d_{j_1,...,j_n}}-y_{2j-1,d_{j_1,...,j_n}}=-m^{(d_{j_1,...,j_n})}_{j+1}(z_1,...,z_n)y_{2j,d_{j_1,...,j_n}}\prod\limits_{k=1}^nz_k\\
    \end{array}\right.
   \end{equation}
subject to the initial conditions 
 \begin{equation}\label{20.4.14xz2}\begin{split}&
    P_{-1,d_{j_1,...,j_n}}^{+}(z_1,...,z_n)\equiv0\quad\mbox{and
    }\quad P_{0,d_{j_1,...,j_n}}^{+}(z_1,...,z_n)\equiv1,\\&
    Q_{-1,d_{j_1,...,j_n}}^{+}(z_1,...,z_n)\equiv 1\quad\mbox{and
    }\quad Q_{0,d_{j_1,...,j_n}}^{+}(z_1,...,z_n)\equiv0.
    \end{split}
   \end{equation}
   
    $P_{j,d_{j_1,...,j_n}}^{+}$ and $Q_{j,d_{j_1,...,j_n}}^{+}$ are called  $d_{j_1,...,j_n}$ -- Stieltjes polynomials of the first and second kinds, respectively.\end{definition}
%%%%%%%%%%%%%%%%%%%%%%%%%%%%%%%%%%%%%%%%%%%%%%%%%%%%%%%%%%%%%%%
\begin{theorem}\label{20p.prop4.7}   Let $\mathbf{s}=\{s_{i_1,i_2,...,i_n}\}_{i_1,...,i_n=0}^{2\nu_N^{(d_{j_1,...,j_n})}-2}$ be the sequence of real numbers   and let $j_1,...,j_n \in \mathbb{Z}_{+}$ be fixed, such that  \eqref{20p.4.1Xqw} holds and $\mathfrak{s}^{(d_{j_1,...,j_n})}=\left\{\mathfrak{s}^{(d_{j_1,...,j_n})}_{i}\right\}_{i=0}^{2n^{(d_{j_1,...,j_n})}_N-1}$ is  $d_{j_1,...,j_n}$-associated sequence of $\mathbf{s}$. Let   $\mathcal{N}\left({\mathfrak s^{(d_{j_1,...,j_n})}}\right)=\left\{\nu_j^{(d_{j_1,...,j_n})}\right\}_{j=1}^N\cup\left\{\mu_j^{(d_{j_1,...,j_n})}\right\}_{j=1}^{N-1}$ be the  set of normal indices of $\mathfrak s^{(d_{j_1,...,j_n})}$. Then:
  \begin{enumerate}
 \item any solution of   $d_{j_1,...,j_n}-$diagonal truncated problem $\bold{MP}\left(\mathbf{s},2\nu^{(d_{j_1,...,j_n})}_N-2\right)$ admits the representation in terms of  $d_{j_1,...,j_n}-$Stieltjes polynomials
 \begin{equation}\label{20.3.prp4.5}\begin{split}
F_{d_{j_1,...,j_n}}&(z_1,...,z_n)\!=\!\cfrac{1}{\prod\limits_{i=1}^n \!\!z_i^{j_i}}\times\\&\times\cfrac{Q^+_{2N-1,d_{j_1,...,j_n}}(z_1,...,z_n)\tau(z_1,...,z_n)+Q^+_{2N-2,d_{j_1,...,j_n}}(z_1,...,z_n)}{P^+_{2N-1,d_{j_1,...,j_n}}\!\!(z_1,...,z_n)\tau(z_1,...,z_n)+P^+_{2N-2,d_{j_1,...,j_n}}\!\!(z_1,...,z_n)},\end{split}
\end{equation}
 where the parameter $\tau$ satisfies \eqref{20.4.8}.

 \item the resolvent matrix $W_{2N-1,d_{j_1,...,j_n}}$   can be represented in terms of  $d_{j_1,...,j_n}-$Stieltjes polynomials
  \begin{equation}\label{20.3.prp4.5.1}\begin{split}
  	&W_{2N-1,d_{j_1,...,j_n}}(z_1,...,z_n)=\\&=\begin{pmatrix}
Q^+_{2N-1,d_{j_1,...,j_n}}(z_1,...,z_n) & Q^+_{2N-2,d_{j_1,...,j_n}}(z_1,...,z_n)  \\
P^+_{2N-1,d_{j_1,...,j_n}}\!\!(z_1,...,z_n)\prod\limits_{i=1}^n\!\! z_i^{j_i} & P^+_{2N-2,d_{j_1,...,j_n}}\!\!(z_1,...,z_n)\prod\limits_{i=1}^n \!\!z_i^{j_i}
\end{pmatrix}\!\!.\end{split}
  \end{equation}
  
  Furthermore,   $W_{2N-1,d_{j_1,...,j_n}}$ admits the  factorization
    \begin{equation}\label{20.3.prp4.5.2}\begin{split}
  	W_{2N-1,d_{j_1,...,j_n}}(z_1,...,z_n)&=A_{d_{j_1,...,j_n}}(z_1,...,z_n)M_{1,d_{j_1,...,j_n}}(z_1,...,z_n)\times\\&\times L_{1,d_{j_1,...,j_n}}(z_1,...,z_n) L_{N-1,d_{j_1,...,j_n}}(z_1,...,z_n)\times\ldots \times\\&\times M_{N,d_{j_1,...,j_n}}(z_1,...,z_n),
  \end{split}  \end{equation}
  where the matrices $A_{d_{j_1,...,j_n}}$, $M_{j,d_{j_1,...,j_n}}$ and $L_{j,d_{j_1,...,j_n}}$ can be found  by
    \begin{equation}\label{20.3.prp4.5.3}\begin{split}&
     A_{d_{j_1,...,j_n}}(z_1,...,z_n)=\begin{pmatrix}
1& 0\\
0& \prod\limits_{i=1}^n z_i
\end{pmatrix},\\&
  M_{j,d_{j_1,...,j_n}}(z_1,...,z_n)=\begin{pmatrix}
1& 0\\
-m_j^{(d_{j_1,...,j_n})}(z_1,...,z_n)\prod\limits_{i=1}^n z_i& 1
\end{pmatrix},\\& L_{j,d_{j_1,...,j_n}}(z_1,...,z_n)=\begin{pmatrix}
1& l_j^{(d_{j_1,...,j_n})}(z_1,...,z_n)\\
0& 1
\end{pmatrix}.\end{split}
  \end{equation}
 \end{enumerate}
 \end{theorem}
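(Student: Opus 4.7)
The plan is to reduce the problem to the already-established $d_0$-diagonal case of Proposition~\ref{20p.prop3.7} and then absorb the multiplicative prefactor $1/\prod_{i=1}^{n} z_i^{j_i}$ by a single left multiplication in the resolvent-matrix formalism. By \eqref{20p.4.3}, the associated function decomposes as
\[
F_{d_{j_1,\ldots,j_n}}(z_1,\ldots,z_n)=\frac{1}{\prod_{i=1}^{n} z_i^{j_i}}\,\widetilde{F}(z_1,\ldots,z_n),
\]
where $\widetilde{F}$, viewed as a function of the single variable $z=\prod_{i=1}^{n} z_i$, inherits the asymptotic expansion \eqref{20.3.14ddsz1} of a $d_0$-type function with sequence $\mathfrak{s}^{(d_{j_1,\ldots,j_n})}$ and the same set of normal indices $\{\nu_j^{(d_{j_1,\ldots,j_n})}\}\cup\{\mu_j^{(d_{j_1,\ldots,j_n})}\}$.

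First I would apply Proposition~\ref{20p.prop3.7}(1) to $\widetilde{F}$, obtaining
\[
\widetilde{F}=\frac{Q^{+}_{2N-1,d_{j_1,\ldots,j_n}}\tau+Q^{+}_{2N-2,d_{j_1,\ldots,j_n}}}{P^{+}_{2N-1,d_{j_1,\ldots,j_n}}\tau+P^{+}_{2N-2,d_{j_1,\ldots,j_n}}}.
\]
The Stieltjes polynomials appearing here are exactly those introduced in Definition~\ref{20p. def4.4}: the recursions \eqref{20.4.14xz1}--\eqref{20.4.14xz2} are formally identical to the $d_0$-recursions once the atoms $(m_j^{(d_{j_1,\ldots,j_n})},l_j^{(d_{j_1,\ldots,j_n})})$ of Proposition~\ref{20.th4.3} are inserted, so no separate identification step is required. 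Dividing by $\prod_{i=1}^{n} z_i^{j_i}$ yields \eqref{20.3.prp4.5}, proving~(1).

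For part~(2) I would read off the resolvent matrix from the fractional-linear dependence of $F_{d_{j_1,\ldots,j_n}}$ on $\tau$. The prefactor $1/\prod_{i=1}^{n}z_i^{j_i}$ affects only the denominator, so it multiplies the second row of the $d_0$-resolvent matrix of Proposition~\ref{20p.prop3.7}(2); equivalently, it is realized as left multiplication by the diagonal matrix $A_{d_{j_1,\ldots,j_n}}$. Applying this left multiplication to the factorization
\[
\widetilde{W}_{2N-1}=M_{1,d_{j_1,\ldots,j_n}}L_{1,d_{j_1,\ldots,j_n}}\cdots L_{N-1,d_{j_1,\ldots,j_n}}M_{N,d_{j_1,\ldots,j_n}}
\]
supplied by Proposition~\ref{20p.prop3.7}(2) delivers both \eqref{20.3.prp4.5.1} and \eqref{20.3.prp4.5.2}.

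The only step that demands genuine care is verifying that the Stieltjes polynomials generated by the recursion of Definition~\ref{20p. def4.4} really are the numerator and denominator convergents of the $S$-fraction appearing in Proposition~\ref{20.th4.3}. This is a standard convergent identity for $S$-fractions and is proved by induction on $N$ along the lines of \cite[Theorem~5.2]{K17}, so no new continued-fraction identity needs to be established. Once that identification is recorded, the proposition is a direct corollary of Proposition~\ref{20p.prop3.7} combined with the rescaling~\eqref{20p.4.3}.
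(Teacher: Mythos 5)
Your proposal is correct and follows essentially the same route as the paper's own proof: factor out $1/\prod_{i=1}^n z_i^{j_i}$ via \eqref{20p.4.3}, apply Proposition~\ref{20p.prop3.7} to the remaining $d_0$-type function $\tilde F_{d_{j_1,\ldots,j_n}}$, and then realize the prefactor as left multiplication of the resolvent matrix by $A_{d_{j_1,\ldots,j_n}}$ to obtain \eqref{20.3.prp4.5.1}--\eqref{20.3.prp4.5.3}. The identification of the polynomials of Definition~\ref{20p. def4.4} with the convergents, which you flag as the delicate point, is handled in the paper exactly as you suggest, by appeal to the $d_0$-case (ultimately \cite[Theorem~5.2]{K17}).
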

 %%%%%%%%%%%%%%%%%%%%%%%%%%
 \begin{proof}
 Due to the asymptotic expansion  \eqref{20p.4.3}, any solution of   $d_{j_1,...,j_n}-$diagonal truncated problem  $\bold{MP}\left(\mathbf{s},2\nu^{(d_{j_1,...,j_n})}_N-2\right)$ admits the following asymptotic expansion
 \begin{equation}\label{20p.4.3x.1}\begin{split}
	F_{d_{j_1,\!...,\!j_n}}\!\!(z_1\!,...\!,\!z_n)&\!\!=\!\!\cfrac{1}{\prod\limits_{i=1}^n \!
\!z_i^{j_i}}\!\! \!\left(\!\!
	-\cfrac{\mathfrak{s}^{(d_{j_1\!,\!...\!,\!j_n})}_{0}}{\prod\limits_{i=1}^n \!\!z_i}\!-\!\ldots\!
	-\!\cfrac{\mathfrak{s}^{(d_{j_1,...,j_n})}_{2\nu^{(d_{j_1\!,\!...\!,\!j_n})}_N-2}}{\prod\limits_{i=1}^n \!\!z_i^{2\nu^{(d_{j_1\!,\!...\!,\!j_n})}_N\!-\!1}}\!+\!o\!\!\left(\cfrac{1}{\prod\limits_{i=1}^n \!\!z_i^{2\nu^{(d_{j_1\!,\!...\!,\!j_n})}_N\!-\!1}
}\!\!\right)
	\!\!\right).
\end{split}\end{equation}

On the other hand, we can consider the expression in the bracket of  \eqref{20p.4.3x.1},  i.e.
 \begin{equation}\label{20p.4.3x.1.c}
\tilde {F}_{d_{j_1,\!...,\!j_n}}\!\!(z_1\!,...\!,\!z_n)=-\cfrac{\mathfrak{s}^{(d_{j_1\!,\!...\!,\!j_n})}_{0}}{\prod\limits_{i=1}^n \!\!z_i}\!-\!\ldots\!
	-\!\cfrac{\mathfrak{s}^{(d_{j_1,...,j_n})}_{2\nu^{(d_{j_1\!,\!...\!,\!j_n})}_N-2}}{\prod\limits_{i=1}^n \!\!z_i^{2\nu^{(d_{j_1\!,\!...\!,\!j_n})}_N\!-\!1}}\!+\!o\!\!\left(\cfrac{1}{\prod\limits_{i=1}^n \!\!z_i^{2\nu^{(d_{j_1\!,\!...\!,\!j_n})}_N\!-\!1}
}\right).
\end{equation}

By  Proposition \ref{20p.prop3.7}, \eqref{20p.4.3x.1.c} can be represented in the following form

 \begin{equation}\label{20p.4.3x.1.c2}
\tilde {F}_{d_{j_1,\!...,\!j_n}}\!\!(z_1\!,...\!,\!z_n)\!=\!\cfrac{Q^+_{2N\!-\!1,d_{j_1,...,j_n}}\!(z_1,...,z_n)\tau\!(z_1,...,z_n)\!+\!Q^+_{2N\!-\!2,d_{j_1,...,j_n}}\!(z_1,...,z_n)}{P^+_{2N\!-\!1,d_{j_1,...,j_n}}\!\!(z_1,...,z_n)\tau(z_1,...,z_n)\!+\!P^+_{2N\!-\!2,d_{j_1,...,j_n}}\!\!(z_1,...,z_n)},
\end{equation}
where $P^+_{i,d_{j_1,...,j_n}}$ and $Q^+_{i,d_{j_1,...,j_n}}$ are defined by \eqref{20.4.14xz1}--\eqref{20.4.14xz2} and the paramiter $\tau$ satisfies \eqref{20.4.8}. The resolvent matrix 
$\tilde{W}_{2N-1,d_{j_1,...,j_n}}$ associated with $\tilde {F}_{d_{j_1,\!...,\!j_n}}$ can be found by
  \begin{equation}\label{20.3.prp4.5.1c1}
  	\tilde{W}_{2N-1,d_{j_1,...,j_n}}\!\!(z_1,...,z_n)\!=\!\!\begin{pmatrix}
Q^+_{2N-1,d_{j_1,...,j_n}}\!(z_1,...,z_n)\! &\! Q^+_{2N-2,d_{j_1,...,j_n}}\!(z_1,...,z_n)  \\
P^+_{2N-1,d_{j_1,...,j_n}}\!\!(z_1,...,z_n)\!&\! P^+_{2N-2,d_{j_1,...,j_n}}\!\!(z_1,...,z_n)
\end{pmatrix}\!\!.
  \end{equation}

By Proposition \ref{20p.prop3.7}, $\tilde{W}_{2N-1,d_{j_1,...,j_n}}$ admits the following factorization 
 \begin{equation}\label{20.3.prp4.5.1c2}\begin{split}
		\tilde{W}_{2N-1,d_{j_1,...,j_n}}\!\!(z_1,...,z_n)&=M_{1,d_{j_1,...,j_n}}(z_1,...,z_n)\times\\&\times L_{1,d_{j_1,...,j_n}}(z_1,...,z_n) L_{N-1,d_{j_1,...,j_n}}(z_1,...,z_n)\times\ldots \times\\&\times M_{N,d_{j_1,...,j_n}}(z_1,...,z_n),
  \end{split} 
  \end{equation}
 where the matrices $M_{j,d_{j_1,...,j_n}}$ and $L_{j,d_{j_1,...,j_n}}$ are defined by \eqref{20.3.prp4.5.3}.
 
 Due to
\[
	 {F}_{d_{j_1,\!...,\!j_n}}\!(z_1\!,...\!,\!z_n)=\cfrac{1}{\prod\limits_{i=1}^n \!
\!z_i^{j_i}} \tilde {F}_{d_{j_1,\!...,\!j_n}}\!(z_1\!,...\!,\!z_n)
\]
 \eqref{20p.4.3x.1.c2} and \eqref{20.3.prp4.5.1c1}, we obtain \eqref{20.3.prp4.5} and \eqref{20.3.prp4.5.1}, respectively. Moreover, if we set
 \[
  A_{d_{j_1,...,j_n}}(z_1,...,z_n)=\begin{pmatrix}
1& 0\\
0& \prod\limits_{i=1}^n z_i
\end{pmatrix},
 \]
 then 
 \[
 	 A_{d_{j_1,...,j_n}}(z_1,...,z_n)\tilde{W}_{2N-1,d_{j_1,...,j_n}}\!\!(z_1,...,z_n)={W}_{2N-1,d_{j_1,...,j_n}}\!\!(z_1,...,z_n)
 \]
 and the factorization \eqref{20.3.prp4.5.2}--\eqref{20.3.prp4.5.3} holds.   This completes the proof.~\end{proof} 
  %%%%%%%%%%%%%%%%%%%%%%%%%%%%%%%%%%%%%%%%%%%%%%%%%%%%%%%%%%%%%%%%

 \subsection{Even $d_{j_1,...,j_n}$-diagonal  truncated problem} The next, we study the even $d_{j_1,...,j_n}$-diagonal  truncated  problem.
 
 %%%%%%%%%%%%%%%%%%%%%%%%%%%%%%%%%%%%%%%%%%%%%%%%%%%%%%%%%%%%%%%%
\begin{theorem}\label{20.th4.2} Let $\mathbf{s}=\{s_{i_1,i_2,...,i_n}\}_{i_1,...,i_n=0}^{2n^{(d_{j_1,...,j_n})}_N-1}$ be a sequence of real numbers   and let $j_1,...,j_n \in \mathbb{Z}_{+}$ be fixed such that  \eqref{20p.4.1Xqw} holds and  $\mathfrak{s}^{(d_{j_1,...,j_n})}=\left\{\mathfrak{s}^{(d_{j_1,...,j_n})}_{i}\right\}_{i=0}^{2n^{(d_{j_1,...,j_n})}_N-1}$ is the $d_{j_1,...,j_n}$-associated sequence of $\mathbf{s}$. Let   $\mathcal{N}({\mathfrak s^{(d_{j_1,...,j_n})}})$ be the set of normal indices  of the sequence  $\mathfrak s^{(d_{j_1,...,j_n})}$ and $n^{(d_{j_1,...,j_n})}_N\in \mathcal{N}({\mathfrak s^{(d_{j_1,...,j_n})}})$. Then any solution of $d_{j_1,...,j_n}$-diagonal truncated problem $\bold{MP}(\mathbf{s},2n^{(d_{j_1,...,j_n})}_N-1)$ admits the following  representation
\begin{equation}\label{20.4.4}\begin{split}
	F_{d_{j_1,...,j_n}}(z_1,...z_n)\!=\!&-\cfrac{1}{\prod\limits_{i=1}^n z_i^{j_i}}\cdot\cfrac{b_0^{(d_{j_1,...,j_n})}\big|}{\big|a_0^{(d_{j_1,...,j_n})}(z_1,...,z_n)}\!-\!\cfrac{b_1^{(d_{j_1,...,j_n})}\big|}{\big|a_1^{(d_{j_1,...,j_n})}(z_1,...,z_n)}\!-\!\ldots\!-\!\\&
	-\cfrac{b_{N-1}^{(d_{j_1,...,j_n})}\big|}{\big|a_{N-1}^{(d_{j_1,...,j_n})}(z_1,...,z_n)+\tau(z_1,...,z_n)}.
\end{split}\end{equation}
where the parameter $\tau$ satisfies the following
\begin{equation}\label{20.4.5}
	\tau(z_1,...,z_n)=o(1).
\end{equation}
 The atoms $\left(a^{(d_{j_1,...,j_n})}_i,b^{(d_{j_1,...,j_n})}_i\right)$ can be  found by
 \begin{equation}\label{20.4.6}\begin{split}
	&b^{(d_{j_1,...,j_n})}_0=\mathfrak s^{(d_{j_1,...,j_n})}_{n_1-1}\quad  \mbox{and}\quad b^{(d_{j_1,...,j_n})}_j=\mathfrak s^{(d_{j_1,...,j_n}, j)}_{n^{d_{j_1,...,j_n}}_{j}-n^{(d_{j_1,...,j_n})}_{j-1}-1}\\&
	 a^{(\!d_{j_1,...,j_n}\!)}_{j}(\!z_1,...,z_n\!)\!=\!\!\cfrac{1}{D^{(\!d_{j_1,...,j_n})\!}_{\nu}} \!\begin{vmatrix}
	\mathfrak  s^{(\!d_{j_1,...,j_n}, j-1\!)}_{0}&\mathfrak  s^{(\!d_{j_1,...,j_n}, j-1\!)}_{1}& \ldots&  \mathfrak s^{(\!d_{j_1,...,j_n}, j-1\!)}_{\nu}\\
	  \ldots& \ldots&\ldots& \ldots\\
	   \mathfrak s^{(\!d_{j_1,...,j_n}, j-1\!)}_{\nu-1}& \mathfrak s^{(\!d_{j_1,...,j_n}, j-1\!)}_{\nu}& \ldots&\mathfrak  s^{(\!d_{j_1,...,j_n}, j-1\!)}_{2\nu-1}\\
	    1& \prod\limits_{i=1}^n z_i&\ldots& \prod\limits_{i=1}^n z^{\nu}_i
	 \end{vmatrix}
	 \!\!,\\&
	\mathfrak s^{(\!d_{j_1,...,j_n},\! j\!)}_i\!\!=\!\!\cfrac{(\!-\!1)^{i\!+\!\nu}}{\left(\!\mathfrak s^{(\!d_{j_1,...,j_n}, j\!-\!1\!)}_{\nu\!-\!1}\!\right)^{i\!+\!\nu\!+\!2}}\begin{vmatrix}
\!\mathfrak s^{(\!d_{j_1,...,j_n}, \!j\!-\!1\!)}_{\nu}\!& \!\mathfrak s^{(\!d_{j_1,...,j_n},\! j\!-\!1\!)}_{\nu\!-\!1}\! &0&\ldots& 0\\
\vdots& \ddots &\ddots&\ddots & \vdots\\
\vdots&  &\ddots&\ddots &0\\
\!\vdots\!&  \!\!&\! &\!\ddots \! \!&\!\!\mathfrak  s^{(\!d_{j_1,...,j_n},\! j\!-\!1\!)}_{\nu\!-\!1}\!\\
\mathfrak s^{(\!d_{j_1\!,...,\!j_n},\!j\!-\!1\!)}_{2\nu+i}\!&\! \!\ldots\! \!&\!\ldots\!&\!\ldots\!&\! \mathfrak s^{(\!d_{j_1,...,j_n}, \!j\!-\!1\!)}_{\nu}
\end{vmatrix}\!\!,
\end{split}
\end{equation}
 $i=\overline{0,2n^{(d_{j_1,...,j_n})}_N-2n^{(d_{j_1,...,j_n})}_j-1}$, $j=\overline {0, N}$, $\mathfrak s^{(d_{j_1,...,j_n}, 0)}_{\nu}=\mathfrak s^{(d_{j_1,...,j_n})}_{\nu}$, $n^{(d_{j_1,...,j_n})}_{0}=0$ and  $\nu=n^{(d_{j_1,...,j_n})}_j-n^{(d_0)}_{j-1}$.
\end{theorem}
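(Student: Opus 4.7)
The plan is to reduce the even $d_{j_1,\ldots,j_n}$-diagonal truncated problem to the even $d_0$-diagonal truncated problem already handled by Proposition \ref{20.th3.1}, by pulling out the prefactor $\prod_{i=1}^n z_i^{-j_i}$.

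First, using Definition \ref{20p.def4.1} and the hypothesis \eqref{20p.4.1Xqw}, any solution $F_{d_{j_1,\ldots,j_n}}$ of $\bold{MP}(\mathbf{s},2n^{(d_{j_1,\ldots,j_n})}_N-1)$ admits the asymptotic expansion \eqref{20p.4.2}, and by the identity \eqref{20p.4.3} we may rewrite
\[
F_{d_{j_1,\ldots,j_n}}(z_1,\ldots,z_n)=\frac{1}{\prod\limits_{i=1}^n z_i^{j_i}}\,\tilde{F}(z_1,\ldots,z_n),
\]
where
\[
\tilde{F}(z_1,\ldots,z_n)=-\frac{\mathfrak{s}^{(d_{j_1,\ldots,j_n})}_{0}}{\prod\limits_{i=1}^n z_i}-\ldots-\frac{\mathfrak{s}^{(d_{j_1,\ldots,j_n})}_{2n^{(d_{j_1,\ldots,j_n})}_N-1}}{\prod\limits_{i=1}^n z_i^{2n^{(d_{j_1,\ldots,j_n})}_N}}+o\!\left(\frac{1}{\prod\limits_{i=1}^n z_i^{2n^{(d_{j_1,\ldots,j_n})}_N}}\right).
\]

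Next, setting $z=\prod_{i=1}^n z_i$ as in \eqref{20.3.3}, the function $\tilde{F}$ is exactly of the form treated in Proposition \ref{20.th3.1}, with the associated sequence $\mathfrak s^{(d_{j_1,\ldots,j_n})}$ in place of $\mathfrak s^{(d_0)}$, and with $n^{(d_{j_1,\ldots,j_n})}_N\in\mathcal{N}(\mathfrak s^{(d_{j_1,\ldots,j_n})})$ being the last normal index so that the truncation length $2n^{(d_{j_1,\ldots,j_n})}_N-1$ matches the even case hypothesis. Applying Proposition \ref{20.th3.1} directly yields a continued fraction representation of $\tilde{F}$ with atoms $(a_j^{(d_{j_1,\ldots,j_n})},b_j^{(d_{j_1,\ldots,j_n})})$ computed by the same determinantal formulas \eqref{20.3.7}, but with $\mathfrak s^{(d_0)}$ replaced throughout by $\mathfrak s^{(d_{j_1,\ldots,j_n})}$ and the successive sequences $\mathfrak s^{(d_{j_1,\ldots,j_n},j)}$ generated by the same recursion. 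These are precisely the formulas \eqref{20.4.6}. The parameter governing $\tilde F$ satisfies $\tau(z_1,\ldots,z_n)=o(1)$ as in \eqref{20.3.6}, which gives \eqref{20.4.5}.

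Multiplying the resulting continued fraction for $\tilde{F}$ by the prefactor $\prod_{i=1}^n z_i^{-j_i}$ produces exactly the representation \eqref{20.4.4}. The only nontrivial point is checking that the variable substitution $z=\prod_{i=1}^n z_i$ is compatible: both the asymptotic regime $z_i\widehat{\rightarrow}\infty$ and the normal-index structure depend only on $\mathfrak s^{(d_{j_1,\ldots,j_n})}$, and the determinants in \eqref{20.4.6} are formed from these scalar moments, so the entire passage is purely formal once the factorization \eqref{20p.4.3} is in hand. The main obstacle, as in the odd case (Proposition \ref{20.th4.3}), is bookkeeping: verifying that the recursion producing $\mathfrak s^{(d_{j_1,\ldots,j_n},j)}$ and the resulting atoms $a_j^{(d_{j_1,\ldots,j_n})}, b_j^{(d_{j_1,\ldots,j_n})}$ agree with those obtained from Proposition \ref{20.th3.1} applied to the one-variable reduction, and that the $o(1)$ remainder at the innermost level indeed corresponds to the asymptotic condition \eqref{20.4.5}. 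This completes the proof.
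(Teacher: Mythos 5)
Your proposal is correct and follows essentially the same route as the paper: factor out $\prod_{i=1}^n z_i^{-j_i}$ via \eqref{20p.4.3}, apply Proposition \ref{20.th3.1} to the resulting expansion in $z=\prod_{i=1}^n z_i$ with $\mathfrak s^{(d_{j_1,\ldots,j_n})}$ in place of $\mathfrak s^{(d_0)}$, and multiply back by the prefactor to get \eqref{20.4.4}--\eqref{20.4.6}. The paper's own proof is exactly this reduction, stated even more briefly.
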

%%%%%%%%%%%%%%%%%%%%%%%%%%%%%%%%%%%%%%%%%%%%%%%%%%%%%%%%%%%%%%%

\begin{proof} The proof  is based on Proposition~\ref{20.th3.1}. According to representation \eqref{20p.4.3}, the Proposition~\ref{20.th3.1} can be applied to  
\[
	-\cfrac{\mathfrak{s}^{(d_{j_1,...,j_n})}_{0}}{\prod\limits_{i=1}^n z_i}-\ldots
	-\cfrac{\mathfrak{s}^{(d_{j_1,...,j_n})}_{2n^{(d_{j_1,...,j_n})}_N-1}}{\prod\limits_{i=1}^n z_i^{2n^{(d_{j_1,...,j_n})}_N}}+o\left(\cfrac{1}{\prod\limits_{i=1}^n z_i^{2n^{(d_{j_1,...,j_n})}_N}
}\right)
\]
and we obtain \eqref{20.4.4}--\eqref{20.4.6}.  This completes the proof.~\end{proof}

%%%%%%%%%%%%%%%%%%%%%%%%%%%%%%%%%%%%%%%%%%%%%%%%%%%%%%%%%%%%%%%%%%%%%%%%%%%%%%%%%%%%%%%
\begin{proposition}\label{20.th4.4} Let $\mathbf{s}=\{s_{i_1,i_2,...,i_n}\}_{i_1,...,i_n=0}^{2\mu_N^{(d_{j_1,...,j_n})}-1}$ be the sequence of real numbers   and let $j_1,...,j_n \in \mathbb{Z}_{+}$ be fixed such that \eqref{20p.4.1Xqw} holds and  $\mathfrak{s}^{(d_{j_1,...,j_n})}=\left\{\mathfrak{s}^{(d_{j_1,...,j_n})}_{i}\right\}_{i=0}^{2\mu^{(d_{j_1,...,j_n})}_N-1}$ is the $d_{j_1,...,j_n}$-associated sequence of $\mathbf{s}$. Let   $\mathcal{N}\!\!\left({\!\mathfrak s^{(d_{j_1,...,j_n})}}\!\right)\!\!=\left\{\nu_j^{(d_{j_1,...,j_n})}\right\}_{j=1}^N\cup\left\{\mu_j^{(d_{j_1,...,j_n})}\right\}_{j=1}^{N}$ be the  set of normal indices of $\mathfrak s^{(d_{j_1,...,j_n})}$. Then any solution of  $d_{j_1,...,j_n}$-diagonal truncated problem $\bold{MP}(\mathbf{s},2\mu^{(d_{j_1,...,j_n})}_N-1)$ admits the representation
 \begin{equation}\label{20.4.13}\begin{split}
 	F_{d_{j_1,...,j_n}}\!(\!z_1\!,..., \!z_n\!)\!=\!&\cfrac{1}{\prod\limits_{i=1}^n\!\! z_i^{j_i}}\!\cdot\!\!\cfrac{1\big|}{\big|\!-\!m_1^{(d_{j_1,...,j_n})}\!\!(z_1,..., z_n)\!\!\prod\limits_{j=1}^n\!\!z_j}\!+\!\!\cfrac{1\big|}{\big|l_1^{(d_{j_1,...,j_n}\!)}\!(z_1,...,z_n)}\!+\!\ldots\!+\!\\&+\cfrac{1\big|}{\big| -m_N^{(d_{j_1,...,j_n})}(z_1,..., z_n)\prod\limits_{j=1}^nz_j+\cfrac{1}{\tau(z_1,...,z_n)}}+\\&
	+\cfrac{1\big|}{\big| l_N^{(d_{j_1,...,j_n})}(z_1,..., z_n)\prod\limits_{j=1}^nz_j+\tau(z_1,...,z_n)},
	\end{split}
 \end{equation}
 where  the parameter $\tau$ satisfies the following  
\begin{equation}\label{20.4.15}
  	\tau(z_1, ..., z_n)=o\left(1\right),
   \end{equation}
 the atoms $\left(m_j^{(d_{j_1,...,j_n})} , l^{(d_{j_1,...,j_n})}_j\right)$ can be found by~\eqref{eq4.9:}--\eqref{20.4.12}.
\end{proposition}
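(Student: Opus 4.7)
The approach is to reduce the even $d_{j_1,\ldots,j_n}$-diagonal truncated problem to the already-established even $d_0$-diagonal result of Proposition~\ref{20.th3.4} by exactly the same factorization device used in the proof of Proposition~\ref{20.th4.3}, where the odd case was handled via Proposition~\ref{20.th3.3}.

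The first step is to use the representation \eqref{20p.4.3} to split off the scalar prefactor, writing
\[
F_{d_{j_1,\ldots,j_n}}(z_1,\ldots,z_n) = \frac{1}{\prod_{i=1}^n z_i^{j_i}}\,\tilde F_{d_{j_1,\ldots,j_n}}(z_1,\ldots,z_n),
\]
where
\[
\tilde F_{d_{j_1,\ldots,j_n}}(z_1,\ldots,z_n) = -\sum_{k=0}^{2\mu_N^{(d_{j_1,\ldots,j_n})}-1}\frac{\mathfrak s^{(d_{j_1,\ldots,j_n})}_k}{\prod_{i=1}^n z_i^{k+1}} + o\!\left(\frac{1}{\prod_{i=1}^n z_i^{2\mu_N^{(d_{j_1,\ldots,j_n})}}}\right).
\]
Setting $z = \prod_{i=1}^n z_i$, this is precisely the scalar-type asymptotics addressed by the even $d_0$-diagonal problem for the associated sequence $\mathfrak s^{(d_{j_1,\ldots,j_n})}=\{\mathfrak s^{(d_{j_1,\ldots,j_n})}_i\}_{i=0}^{2\mu_N^{(d_{j_1,\ldots,j_n})}-1}$, whose normal indices are those listed in $\mathcal{N}(\mathfrak s^{(d_{j_1,\ldots,j_n})})$.

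The second step is to apply Proposition~\ref{20.th3.4} to $\tilde F_{d_{j_1,\ldots,j_n}}$. Since the Hankel-determinant atoms appearing in \eqref{eq:}--\eqref{20.3.10} depend only on the underlying associated sequence and on the scalar $\prod z_i$, they reproduce verbatim the atoms $(m_j^{(d_{j_1,\ldots,j_n})}, l_j^{(d_{j_1,\ldots,j_n})})$ defined via \eqref{eq4.9:}--\eqref{20.4.12}, and the accompanying recurrence identities transfer unchanged. Proposition~\ref{20.th3.4} then gives the continued-fraction expansion \eqref{20.3.14} for $\tilde F_{d_{j_1,\ldots,j_n}}$ with parameter condition $\tau=o(1)$, which is exactly \eqref{20.4.15}.

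Finally, multiplying the expansion of $\tilde F_{d_{j_1,\ldots,j_n}}$ by the scalar prefactor $(\prod z_i^{j_i})^{-1}$ produces \eqref{20.4.13}. The only check required is the compatibility of the error term: after multiplication by $(\prod z_i^{j_i})^{-1}$, the remainder $o(1/\prod z_i^{2\mu_N^{(d_{j_1,\ldots,j_n})}})$ yields exactly the decay \eqref{20p.4.2} demands of $F_{d_{j_1,\ldots,j_n}}$, which is immediate in the nontangential limit $z_i\widehat{\rightarrow}\infty$. The substantive analytic content is entirely delegated to Proposition~\ref{20.th3.4}, so the only real obstacle is the bookkeeping of the scalar prefactor inside the continued-fraction display.
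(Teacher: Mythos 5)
Your proposal follows essentially the same route as the paper: factor out the prefactor $\prod_{i=1}^n z_i^{-j_i}$ using \eqref{20p.4.3}, introduce the auxiliary function $\tilde F_{d_{j_1,\ldots,j_n}}$ with the scalar asymptotics in $z=\prod_{i=1}^n z_i$, apply the even $d_0$-diagonal result (Proposition~\ref{20.th3.4}) to obtain the continued fraction with atoms \eqref{eq4.9:}--\eqref{20.4.12} and $\tau=o(1)$, and then multiply back by the prefactor to get \eqref{20.4.13}. This matches the paper's proof in both decomposition and the key cited result, so it is correct and not a genuinely different argument.
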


%%%%%%%%%%%%%%%%%%%%%%%%%%%%%%%%%%%%%%%%%%%%%%%%%%%%%%%%%%%%%%%%%%%%%%%%%%%%%%%%%%%%%%%
\begin{proof}
Assume $\mathbf{s}=\{s_{i_1,i_2,...,i_n}\}_{i_1,...,i_n=0}^{2\mu_N^{(d_{j_1,...,j_n})}-1}$ is the sequence of real numbers   and let $j_1,...,j_n \in \mathbb{Z}_{+}$ be fixed such that \eqref{20p.4.1Xqw} holds and  $\mathfrak{s}^{(d_{j_1,...,j_n})}=\left\{\mathfrak{s}^{(d_{j_1,...,j_n})}_{i}\right\}_{i=0}^{2\mu^{(d_{j_1,...,j_n})}_N-1}$ is the $d_{j_1,...,j_n}$-associated sequence of $\mathbf{s}$, where $\mu^{(d_{j_1,...,j_n})}_N$ is the largest normal index of the sequence $\mathfrak{s}^{(d_{j_1,...,j_n})}$. Then the associated function $F_{d_{j_1,\!...,\!j_n}}$ takes the following 
 \begin{equation}\label{20p.4.3x.tr}\begin{split}
	F_{d_{j_1,\!...,\!j_n}}\!\!(z_1\!,...\!,\!z_n)&\!\!=\!\!\cfrac{1}{\prod\limits_{i=1}^n \!
\!z_i^{j_i}}\!\! \!\left(\!\!
	-\cfrac{\mathfrak{s}^{(d_{j_1\!,\!...\!,\!j_n})}_{0}}{\prod\limits_{i=1}^n \!\!z_i}\!-\!\ldots\!
	-\!\cfrac{\mathfrak{s}^{(d_{j_1,...,j_n})}_{2\mu^{(d_{j_1\!,\!...\!,\!j_n})}_N-1}}{\prod\limits_{i=1}^n \!\!z_i^{2\mu^{(d_{j_1\!,\!...\!,\!j_n})}_N\!}}\!+\!o\!\!\left(\cfrac{1}{\prod\limits_{i=1}^n \!\!z_i^{2\mu^{(d_{j_1\!,\!...\!,\!j_n})}_N\!}
}\!\!\right)
	\!\!\right).
\end{split}\end{equation}

Setting 
 \begin{equation}\label{20.4.13p1}
	\tilde{F}_{d_{j_1,\!...,\!j_n}}\!\!(z_1\!,...\!,\!z_n)=-\cfrac{\mathfrak{s}^{(d_{j_1\!,\!...\!,\!j_n})}_{0}}{\prod\limits_{i=1}^n \!\!z_i}\!-\!\ldots\!
	-\!\cfrac{\mathfrak{s}^{(d_{j_1,...,j_n})}_{2\mu^{(d_{j_1\!,\!...\!,\!j_n})}_N-1}}{\prod\limits_{i=1}^n \!\!z_i^{2\mu^{(d_{j_1\!,\!...\!,\!j_n})}_N\!}}\!+\!o\!\!\left(\cfrac{1}{\prod\limits_{i=1}^n \!\!z_i^{2\mu^{(d_{j_1\!,\!...\!,\!j_n})}_N\!},
}\!\!\right)
  \end{equation}
we get the following relation between $F_{d_{j_1,\!...,\!j_n}}$ and $\tilde{F}_{d_{j_1,\!...,\!j_n}}$, i.e.
 \begin{equation}\label{20.4.13p2}
	F_{d_{j_1,\!...,\!j_n}}\!\!(z_1\!,...\!,\!z_n)=\cfrac{1}{\prod\limits_{i=1}^n \!
\!z_i^{j_i}}\tilde{F}_{d_{j_1,\!...,\!j_n}}\!\!(z_1\!,...\!,\!z_n).
  \end{equation}
By Proposition~\ref{20.th3.4}, $\tilde{F}_{d_{j_1,\!...,\!j_n}}$ can be represented as
\begin{equation}\label{20.4.13p.r1}\begin{split}
 	\tilde{F}_{d_{j_1,...,j_n}}\!(\!z_1\!,..., \!z_n\!)\!=\!&\cfrac{1\big|}{\big|\!-\!m_1^{(d_{j_1,...,j_n})}\!\!(z_1,..., z_n)\!\!\prod\limits_{j=1}^n\!\!z_j}\!+\!\!\cfrac{1\big|}{\big|l_1^{(d_{j_1,...,j_n}\!)}\!(z_1,...,z_n)}\!+\!\ldots\!+\!\\&+\cfrac{1\big|}{\big| -m_N^{(d_{j_1,...,j_n})}(z_1,..., z_n)\prod\limits_{j=1}^nz_j+\cfrac{1}{\tau(z_1,...,z_n)}}+\\&
	+\cfrac{1\big|}{\big| l_N^{(d_{j_1,...,j_n})}(z_1,..., z_n)\prod\limits_{j=1}^nz_j+\tau(z_1,...,z_n)},
	\end{split}
 \end{equation}
 where  the parameter $\tau$ satisfies \eqref{20.4.15} and 
 the atoms $\left(m_j^{(d_{j_1,...,j_n})} , l^{(d_{j_1,...,j_n})}_j\right)$ can be found by~\eqref{eq4.9:}--\eqref{20.4.12}. Due to \eqref{20.4.13p2} and \eqref{20.4.13p.r1}, we obtain \eqref{20.4.13}.  This completes the proof.~\end{proof}

%%%%%%%%%%%%%%%%%%%%%%%%%%%%%%%%%%%%%%%%%%%%%%%%%%%%%%%%%%%%%%%%%%%%%%%%%%%%%%%%%%%%%%%
\begin{remark}\label{20p. cor 4x.5} If  $d_{j_1,...,j_n}$-associated sequence $\mathfrak s^{(d_{j_1,...,j_n})}=\left\{\mathfrak s^{(d_{j_1,...,j_n})}_i\right\}_{i=0}^{2\mu_N^{(d_{j_1,...,j_n})}-1}$  is  regular, then $l_j^{(d_{j_1,...,j_n})}=$const and one can be calculated by
 \begin{equation}\label{20.4x.14y1}
	l_j^{(d_{j_1,...,j_n})}
   =   \frac{1}{{\mathfrak{s}}_{-1}^{(d_{j_1,...,j_n},2j)}}\qquad\mbox{for all } j=\overline{1,N}.
   \end{equation}
\end{remark}
%%%%%%%%%%%%%%%%%%%%%%%%%%%%%%%%%%%%%%%%%%%%%%%%%%%%%%%%%%%%%%%%%%%%%%%%%%%%%%%%%%%%

 \begin{theorem}\label{20p.prop4.8}  Let $\mathbf{s}=\{s_{i_1,i_2,...,i_n}\}_{i_1,...,i_n=0}^{2\mu_N^{(d_{j_1,...,j_n})}-1}$ be the sequence of real numbers   and let $j_1,...,j_n \in \mathbb{Z}_{+}$ be fixed such that \eqref{20p.4.1Xqw} holds and   $\mathfrak{s}^{(d_{j_1,...,j_n})}=\left\{\mathfrak{s}^{(d_{j_1,...,j_n})}_{i}\right\}_{i=0}^{2n^{(d_{j_1,...,j_n})}_N-1}$ is  $d_{j_1,...,j_n}$-associated sequence of $\mathbf{s}$. Let   $\mathcal{N}\left({\mathfrak s^{(d_{j_1,...,j_n})}}\right)=\left\{\nu_j^{(d_{j_1,...,j_n})}\right\}_{j=1}^N\cup\left\{\mu_j^{(d_{j_1,...,j_n})}\right\}_{j=1}^{N}$ be the  set of normal indices of $\mathfrak s^{(d_{j_1,...,j_n})}$. Then:
   \begin{enumerate}
 \item Any solution of  $d_{j_1,...,j_n}$--diagonal truncated problem $\bold{MP}\left(\mathbf{s},2\mu^{(d_{j_1,...,j_n})}_N-1\right)$ admits the representation in terms of $d_{j_1,...,j_n}-$Stieltjes polynomials
 \begin{equation}\label{20.3.prp4.8}\begin{split}
F_{d_{j_1,...,j_n}}&(z_1,...,z_n)=\cfrac{1}{\prod\limits_{i=1}^n z_i^{j_i}}\times\\&\times\cfrac{Q^+_{2N-1,d_{j_1,...,j_n}}(z_1,...,z_n)\tau(z_1,...,z_n)+Q^+_{2N,d_{j_1,...,j_n}}(z_1,...,z_n)}{P^+_{2N-1,d_{j_1,...,j_n}}(z_1,...,z_n)\tau(z_1,...,z_n)+P^+_{2N,d_{j_1,...,j_n}}(z_1,...,z_n)},\end{split}
\end{equation}
 where the parameter $\tau$ satisfies \eqref{20.4.15}.
 \item $W_{2N,d_{j_1,...,j_n}}$ can be represented in terms of the  $d_{j_1,...,j_n}-$Stieltjes polynomials
  \begin{equation}\label{20.3.prp4.8.1}\begin{split}
  	W_{2N,d_{j_1,...,j_n}}&(z_1,...,z_n)=\\&=\begin{pmatrix}
Q^+_{2N-1,d_{j_1,...,j_n}}(z_1,...,z_n) & Q^+_{2N,d_{j_1,...,j_n}}(z_1,...,z_n)  \\
P^+_{2N-1,d_{j_1,...,j_n}}(z_1,...,z_n) \prod\limits_{i=1}^n \!\!z_i^{j_i}& P^+_{2N,d_{j_1,...,j_n}}(z_1,...,z_n)\prod\limits_{i=1}^n \!\!z_i^{j_i}
\end{pmatrix}\!\!.\end{split}
  \end{equation}
  Furthermore,  the resolvent matrix $W_{2N,d_{j_1,...,j_n}}$ admits the following factorization
    \begin{equation}\label{20.3.prp4.8.2}\begin{split}
  	W_{2N,d_{j_1,...,j_n}}(z_1,...,z_n)&=A_{d_{j_1,...,j_n}}(z_1,...,z_n)M_{1,d_{j_1,...,j_n}}(z_1,...,z_n)\times\\&\times L_{1,d_{j_1,...,j_n}}(z_1,...,z_n)\times\ldots \times L_{N-1,d_{j_1,...,j_n}}(z_1,...,z_n)\times\\&\\&\times M_{N,d_{j_1,...,j_n}}(z_1,...,z_n)L_{N,d_{j_1,...,j_n}}(z_1,...,z_n),
  \end{split}  \end{equation}
  where the matrices $A_{d_{j_1,...,j_n}}$, $M_{j,d_{j_1,...,j_n}}$ and $L_{j,d_{j_1,...,j_n}}$ are defined by \eqref{20.3.prp4.5.3}.
 \end{enumerate}
 \end{theorem}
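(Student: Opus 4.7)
The plan is to mirror the proof of Theorem \ref{20p.prop4.7} step by step, replacing its appeal to Proposition \ref{20p.prop3.7} with an appeal to Proposition \ref{20p.prop3.8}, which is the $d_0$--analogue for the even index case. The key observation, which is already implicit in Definition \ref{20p.def4.1} and formula \eqref{20p.4.3}, is that $F_{d_{j_1,...,j_n}}$ differs from a $d_0$--type associated function merely by the factor $1/\prod_{i=1}^n z_i^{j_i}$.

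First I would extract the $d_0$--type piece of the asymptotic expansion. Writing
\[
\tilde F_{d_{j_1,...,j_n}}(z_1,...,z_n)=-\cfrac{\mathfrak{s}^{(d_{j_1,...,j_n})}_{0}}{\prod\limits_{i=1}^n z_i}-\ldots -\cfrac{\mathfrak{s}^{(d_{j_1,...,j_n})}_{2\mu^{(d_{j_1,...,j_n})}_N-1}}{\prod\limits_{i=1}^n z_i^{2\mu^{(d_{j_1,...,j_n})}_N}}+o\!\left(\cfrac{1}{\prod\limits_{i=1}^n z_i^{2\mu^{(d_{j_1,...,j_n})}_N}}\right),
\]
we have $F_{d_{j_1,...,j_n}}=\tilde F_{d_{j_1,...,j_n}}/\prod_{i=1}^n z_i^{j_i}$ as in the proof of Proposition~\ref{20.th4.4}. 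The associated sequence of $\tilde F_{d_{j_1,...,j_n}}$ is exactly $\mathfrak s^{(d_{j_1,...,j_n})}$, which has the same set of normal indices $\{\nu_j^{(d_{j_1,...,j_n})}\}\cup\{\mu_j^{(d_{j_1,...,j_n})}\}$, and $\mu_N^{(d_{j_1,...,j_n})}$ is the largest, so the even $d_0$--diagonal Proposition \ref{20p.prop3.8} applies directly to $\tilde F_{d_{j_1,...,j_n}}$. This yields
\[
\tilde F_{d_{j_1,...,j_n}}(z_1,...,z_n)=\cfrac{Q^+_{2N-1,d_{j_1,...,j_n}}\tau+Q^+_{2N,d_{j_1,...,j_n}}}{P^+_{2N-1,d_{j_1,...,j_n}}\tau+P^+_{2N,d_{j_1,...,j_n}}},
\]
and a factorization of the corresponding reduced resolvent matrix $\tilde W_{2N,d_{j_1,...,j_n}}$ as $M_1 L_1\cdots M_N L_N$ with the matrices $M_{j,d_{j_1,...,j_n}}$ and $L_{j,d_{j_1,...,j_n}}$ from \eqref{20.3.prp4.5.3}, since the Stieltjes polynomials defined by Definition \ref{20p. def4.4} coincide with those produced by the $S$--fraction expansion of $\tilde F_{d_{j_1,...,j_n}}$.

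Dividing through by $\prod z_i^{j_i}$ gives \eqref{20.3.prp4.8} immediately. For the resolvent matrix \eqref{20.3.prp4.8.1}, the division by $\prod z_i^{j_i}$ is realized at the level of the linear fractional action by left--multiplying $\tilde W_{2N,d_{j_1,...,j_n}}$ by
\[
A_{d_{j_1,...,j_n}}(z_1,...,z_n)=\begin{pmatrix} 1 & 0 \\ 0 & \prod\limits_{i=1}^n z_i\end{pmatrix}^{\!j_?},
\]
i.e.\ the same diagonal matrix as used in the proof of Theorem \ref{20p.prop4.7} (one checks directly that this multiplication multiplies the denominator by $\prod z_i^{j_i}$ and leaves the numerator unchanged). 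Combining this with the factorization of $\tilde W_{2N,d_{j_1,...,j_n}}$ supplied by Proposition~\ref{20p.prop3.8} produces \eqref{20.3.prp4.8.2}.

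The main obstacle I anticipate is purely bookkeeping rather than conceptual: one must check that the parameter class is preserved under the change from $\tilde F$ to $F$, i.e.\ that $\tau$ in \eqref{20.3.prp4.8} still satisfies \eqref{20.4.15}, even though $F_{d_{j_1,...,j_n}}$ has a different overall order of decay than $\tilde F_{d_{j_1,...,j_n}}$. Since the factor $1/\prod z_i^{j_i}$ is the \emph{same} on numerator and denominator of the linear fractional transformation when one factors it out through $A_{d_{j_1,...,j_n}}$, the admissibility condition on $\tau$ is identical to the one coming from Proposition \ref{20p.prop3.8}, which gives $\tau(z_1,...,z_n)=o(1)$ as required. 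Once this verification is done, the two parts of the theorem follow and the proof is complete.
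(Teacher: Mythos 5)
Your proposal is correct and follows essentially the same route as the paper: pass to the reduced function $\tilde F_{d_{j_1,\dots,j_n}}$, apply the even $d_0$--diagonal result (Proposition \ref{20p.prop3.8}) to get \eqref{20.3.prp4.8} and the reduced resolvent $\tilde W_{2N,d_{j_1,\dots,j_n}}$ with its $M_1L_1\cdots M_NL_N$ factorization, then recover $W_{2N,d_{j_1,\dots,j_n}}=A_{d_{j_1,\dots,j_n}}\tilde W_{2N,d_{j_1,\dots,j_n}}$ by left multiplication with the diagonal matrix implementing the factor $1/\prod_{i=1}^n z_i^{j_i}$. Your hesitation about the exponent on $A_{d_{j_1,\dots,j_n}}$ (the ``$j_?$'') points to a notational inconsistency already present in the paper -- the matrix must scale the second row by $\prod_{i=1}^n z_i^{j_i}$ to match \eqref{20.3.prp4.8.1}, even though \eqref{20.3.prp4.5.3} displays it with $\prod_{i=1}^n z_i$ -- but this does not affect the substance of your argument.
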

 
 %%%%%%%%%%%%%%%%%%%%%%%%%%%%%%%%%%%%%%%%%%%%%%%%%%%%%%%%%%%
 \begin{proof} Let the assumptions of the Theorem hold.  Any solution of  $d_{j_1,...,j_n}$--diagonal truncated problem $\bold{MP}\left(\mathbf{s},2\mu^{(d_{j_1,...,j_n})}_N-1\right)$ takes expansion \eqref{20p.4.3x.tr}.  In a similar way we define $\tilde{F}_{d_{j_1,...,j_n}}$ by \eqref{20.4.13p1}.  By Proposition \ref{20p.prop3.8},  $\tilde{F}_{d_{j_1,...,j_n}}$ can be represented in the terms of  Stieltjes polynomials
\[
\tilde{F}_{d_{j_1,...,j_n}}(z_1,...,z_n)=\cfrac{Q^+_{2N-1,d_{j_1,...,j_n}}(z_1,...,z_n)\tau(z_1,...,z_n)+Q^+_{2N,d_{j_1,...,j_n}}(z_1,...,z_n)}{P^+_{2N-1,d_{j_1,...,j_n}}(z_1,...,z_n)\tau(z_1,...,z_n)+P^+_{2N,d_{j_1,...,j_n}}(z_1,...,z_n)},
\]
where $P^+_{j,d_{j_1,...,j_n}}$ and $Q^+_{j,d_{j_1,...,j_n}}$ are defined by \eqref{20.4.14xz1}--\eqref{20.4.14xz2}, the parameter $\tau$ satisfies  \eqref{20.4.15}. Furthermore, the solution matrix $\tilde{W}_{2N,d_{j_1,...,j_n}}$ associated with the $\tilde{F}_{d_{j_1,...,j_n}}$ takes the following form
 \begin{equation}\label{20.3.prp4.8.1b4}
  \tilde{W}_{2N,d_{j_1,...,j_n}}(z_1,...,z_n)=\begin{pmatrix}
Q^+_{2N-1,d_{j_1,...,j_n}}(z_1,...,z_n) & Q^+_{2N,d_{j_1,...,j_n}}(z_1,...,z_n)  \\
P^+_{2N-1,d_{j_1,...,j_n}}(z_1,...,z_n) & P^+_{2N,d_{j_1,...,j_n}}(z_1,...,z_n)
\end{pmatrix}\!\!
  \end{equation}
and $\tilde{W}_{2N,d_{j_1,...,j_n}}$ admits the following factorization
 \begin{equation}\label{20.3.prp4.8.2xc4}\begin{split}
  	\tilde{W}_{2N,d_{j_1,...,j_n}}(z_1,...,z_n)&=M_{1,d_{j_1,...,j_n}}(z_1,...,z_n)\times\\&\times L_{1,d_{j_1,...,j_n}}(z_1,...,z_n)\times\ldots\times L_{N-1,d_{j_1,...,j_n}}(z_1,...,z_n) \times\\&\\&\times M_{N,d_{j_1,...,j_n}}(z_1,...,z_n)L_{N,d_{j_1,...,j_n}}(z_1,...,z_n),
  \end{split}  \end{equation}
 where the matrices   $M_{j,d_{j_1,...,j_n}}$ and $L_{j,d_{j_1,...,j_n}}$ are defined by \eqref{20.3.prp4.5.3}.
 
  Due to $F_{d_{j_1,\!...,\!j_n}}=\frac{1}{\prod\limits_{i=1}^n \!
\!z_i^{j_i}}\tilde{F}_{d_{j_1,\!...,\!j_n}}$, we obtain \eqref{20.3.prp4.8} and \eqref{20.3.prp4.8.1}. Let us set 
  \begin{equation}\label{Ax1}
  A_{d_{j_1,...,j_n}}(z_1,...,z_n)=\begin{pmatrix}
1& 0\\
0& \prod\limits_{i=1}^n z_i
\end{pmatrix},
 \end{equation}
then ${W}_{2N,d_{j_1,...,j_n}}=  A_{d_{j_1,...,j_n}}\tilde{W}_{2N,d_{j_1,...,j_n}}$, i.e. the factorization \eqref{20.3.prp4.8.2} holds. This completes the proof.~\end{proof}

%%%%%%%%%%%%%%%%%%%%%%%%%%%%%%%%%%%%%%%%%%%%%%%%%%%%%%%%%%%%%

\subsection{$d_{j_1,...,j_n}$-diagonal full problem}

\begin{proposition}\label{20pxprop4.2} Let  $\mathbf{s}=\{s_{i_1,i_2,...,i_n}\}_{i_1,...,i_n=0}^{\infty}$ be the sequence of real numbers 
and let $j_1,...,j_n \in \mathbb{Z}_{+}$ be fixed such that \eqref{20p.4.1Xqw} holds and  $\mathfrak{s}^{(d_{j_1,...,j_n})}=\left\{\mathfrak{s}^{(d_{j_1,...,j_n})}_{i}\right\}_{i=0}^{\infty}$ is  $d_{j_1,...,j_n}$-associated sequence of $\mathbf{s}$. Let  $\mathfrak s^{(d_{j_1,...,j_n})}=\left\{\mathfrak s^{(d_{j_1,...,j_n})}_i\right\}_{i=0}^{\infty}$ be  $d_{j_1,...,j_n}$-associated sequence of $\mathbf{s}$ and   regular, let $\mathcal{N}\left({\mathfrak s^{(d_{j_1,...,j_n})}}\right)$ be the set of normal indices  of $\mathfrak s^{(d_{j_1,...,j_n})}$. Assume additionally that $l_j^{(d_{j_1,...,j_n})}>0$ for all $j\in \mathbb{N}$.
Then:
   \begin{enumerate}
 \item Any solution of  $d_{j_1,...,j_n}$-diagonal full problem $\bold{MP}(\mathbf{s})$ admits the representation
\begin{equation}\label{20.4.eert_1}
	F_{d_{j_1,...,j_n}}(z_1,...z_n)=\cfrac{1}{\prod\limits_{i=1}^n z_i^{j_i}}\cdot\underset{i=0}{\overset{\infty}{\mathbf{K}}}\left(-\cfrac{b_{i}^{(d_{j_1,...,j_n})}}{a_{i}^{(d_{j_1,...,j_n})}(z_1,...,z_n)}\right),
	\end{equation}
where the atoms $\left(a_i^{(d_{j_1,...,j_n})}, b_i^{(d_{j_1,...,j_n})}\right)$ are defined by~\eqref{20.4.6}.

 \item $d_{j_1,...,j_n}$-diagonal full problem  $\bold{MP}(\mathbf{s})$  is indeterminate if and only if 
 \begin{equation}\label{20.4.eert_2}
 	\sum_{i=0}^{\infty}\!\!\cfrac{\left|P_{i}^{(d_{j_1,...,j_n})}(0,...,0)\right|^2}{\prod\limits_{j=0}^i b_i^{(d_{j_1,...,j_n})}}<\infty\quad\mbox{and}\quad 
	\sum_{i=0}^{\infty}\!\!\cfrac{\left|Q_{i}^{(d_{j_1,...,j_n})}(0,...,0)\right|^2}{\prod\limits_{j=0}^i b_i^{(d_{j_1,...,j_n})}}<\infty.
	\end{equation}
\end{enumerate}

\end{proposition}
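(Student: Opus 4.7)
The plan is to reduce the $d_{j_1,\ldots,j_n}$-diagonal full problem to the $d_0$-diagonal full problem via the factorization exhibited in \eqref{20p.4.3}. Setting
\[
\tilde F_{d_{j_1,\ldots,j_n}}(z_1,\ldots,z_n) = \prod_{i=1}^n z_i^{j_i}\, F_{d_{j_1,\ldots,j_n}}(z_1,\ldots,z_n),
\]
the function $\tilde F_{d_{j_1,\ldots,j_n}}$ admits an asymptotic expansion in the single variable $z=\prod_{i=1}^n z_i$ that is formally identical to the $d_0$-type expansion \eqref{20.3.eert_3}, but with $\mathfrak s^{(d_{j_1,\ldots,j_n})}_i$ in place of $\mathfrak s^{(d_0)}_i$. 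This is precisely the setup already analysed.

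First I would apply Proposition \ref{20pxprop3.2} (or equivalently Theorem \ref{20.th3.6}) to $\tilde F_{d_{j_1,\ldots,j_n}}$. By hypothesis the sequence $\mathfrak s^{(d_{j_1,\ldots,j_n})}$ is regular and $l_j^{(d_{j_1,\ldots,j_n})}>0$ for all $j\in\mathbb{N}$, so the hypotheses of Proposition \ref{20pxprop3.2} are met, with atoms computed by \eqref{20.4.6} (the direct analogue of \eqref{20.3.7}). This yields the continued fraction representation for $\tilde F_{d_{j_1,\ldots,j_n}}$, and multiplication by $\bigl(\prod_i z_i^{j_i}\bigr)^{-1}$ gives \eqref{20.4.eert_1}.

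For the indeterminacy criterion, I would exploit the observation that, by Definition \ref{20p. def4.4}, the polynomials $P_i^{(d_{j_1,\ldots,j_n})}$ and $Q_i^{(d_{j_1,\ldots,j_n})}$ are built recursively from $m_j^{(d_{j_1,\ldots,j_n})}$ and $l_j^{(d_{j_1,\ldots,j_n})}$, and inspection of \eqref{eq4.9:}--\eqref{20.4.10} shows these atoms depend on $(z_1,\ldots,z_n)$ only through the product $\prod_i z_i$. Hence $P_i^{(d_{j_1,\ldots,j_n})}(0,\ldots,0)$ and $Q_i^{(d_{j_1,\ldots,j_n})}(0,\ldots,0)$ coincide with the corresponding one-variable Stieltjes polynomials evaluated at $z=0$. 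Since $\tilde F_{d_{j_1,\ldots,j_n}}$ and $F_{d_{j_1,\ldots,j_n}}$ differ only by the fixed nonzero rational prefactor $\prod_i z_i^{-j_i}$, the two moment problems are simultaneously determinate or indeterminate, so the criterion from Proposition \ref{20pxprop3.2}(2) transfers verbatim to \eqref{20.4.eert_2}.

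The main obstacle is the careful verification that the prefactor $\prod_i z_i^{-j_i}$, although singular at the coordinate axes, does not alter the determinacy question, and that the summability conditions are genuinely invariant under the variable reduction $(z_1,\ldots,z_n)\mapsto z=\prod_i z_i$. Once this is established, both parts of the proposition follow by a direct application of the previously proved results for the $d_0$-case, together with the elementary algebraic identity relating $F_{d_{j_1,\ldots,j_n}}$ to $\tilde F_{d_{j_1,\ldots,j_n}}$.
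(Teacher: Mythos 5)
Your argument is essentially the paper's own proof: the paper likewise introduces $\tilde F_{d_{j_1,\ldots,j_n}}=\prod_{i=1}^n z_i^{j_i}\,F_{d_{j_1,\ldots,j_n}}$, applies Proposition~\ref{20pxprop3.2} to it (with the atoms given by \eqref{20.4.6}), and then divides by the prefactor to obtain \eqref{20.4.eert_1} and transfers the indeterminacy criterion \eqref{20.4.eert_2}. The extra care you flag about the prefactor not affecting determinacy is a reasonable remark, but the paper treats this transfer as immediate, so your proposal matches its route.
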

%%%%%%%%%%%%%%%%%%%%%%%%%%%%%%%%%%%%%%%%%%%%%%%%%%%%%%%%%
\begin{proof}
Let the assumptions of the Proposition hold. Then any solution $F_{d_{j_1,...,j_n}}$ of  $d_{j_1,...,j_n}$-diagonal full problem $\bold{MP}(\mathbf{s})$ admits asymptotic expansion
 \begin{equation}\label{Fx1}
F_{d_{j_1,...,j_n}}(z_1,...,z_n)=-\cfrac{1}{\prod\limits_{i=1}^n \!
\!z_i^{j_i}}
	\sum_{k=1}^{\infty}\cfrac{\left(\mathfrak{s}^{(d_{j_1\!,\!...\!,\!j_n})}_{0}\right)^k}{\prod\limits_{i=1}^n \!\!z^k_i}.
\end{equation}

In the next step, we define the function $\tilde{F}_{d_{j_1,...,j_n}}$ such that
 \begin{equation}\label{Fx2}
\tilde{F}_{d_{j_1,...,j_n}}(z_1,...,z_n)=-\sum_{k=1}^{\infty}\cfrac{\left(\mathfrak{s}^{(d_{j_1\!,\!...\!,\!j_n})}_{0}\right)^k}{\prod\limits_{i=1}^n \!\!z^k_i}.
	\end{equation}

By Proposition \ref{20pxprop3.2},  $\tilde{F}_{d_{j_1,...,j_n}}$  takes the following form
\[
\tilde{F}_{d_{j_1,...,j_n}}(z_1,...,z_n)=\underset{i=0}{\overset{\infty}{\mathbf{K}}}\left(-\cfrac{b_{i}^{(d_{j_1,...,j_n})}}{a_{i}^{(d_{j_1,...,j_n})}(z_1,...,z_n)}\right),
	\]
where the atoms $\left(a_i^{(d_{j_1,...,j_n})}, b_i^{(d_{j_1,...,j_n})}\right)$ can be found by~\eqref{20.4.6} and a full problem  $\bold{MP}(\mathfrak{s}^{(d_{j_1\!,\!...\!,\!j_n})})$  is indeterminate if and only if \eqref{20.4.eert_2} holds. Consequently, $F_{d_{j_1,...,j_n}}$ admits the representation \eqref{20.4.eert_1} and   $d_{j_1,...,j_n}$-diagonal full problem  $\bold{MP}(\mathbf{s})$  is indeterminate if and only if \eqref{20.4.eert_2} holds.  This completes the proof.~\end{proof}

%%%%%%%%%%%%%%%%%%%%%%%%%%%%%%%%%%%%%%%%%%%%%%%%%%%%%%%%
\begin{theorem}\label{20.th4.11} Let  $\mathbf{s}=\{s_{i_1,i_2,...,i_n}\}_{i_1,...,i_n=0}^{\infty}$ be the sequence of real numbers 
and let $j_1,...,j_n \in \mathbb{Z}_{+}$ be fixed such that \eqref{20p.4.1Xqw} holds and  $\mathfrak{s}^{(d_{j_1,...,j_n})}=\left\{\mathfrak{s}^{(d_{j_1,...,j_n})}_{i}\right\}_{i=0}^{\infty}$ is  $d_{j_1,...,j_n}$-associated sequence of $\mathbf{s}$. Let  $\mathfrak s^{(d_{j_1,...,j_n})}=\left\{\mathfrak s^{(d_{j_1,...,j_n})}_i\right\}_{i=0}^{\infty}$ be the $d_{j_1,...,j_n}$-associated sequence of $\mathbf{s}$ and   regular, let $\mathcal{N}\left({\mathfrak s^{(d_{j_1,...,j_n})}}\right)$ be the set of normal indices  of $\mathfrak s^{(d_{j_1,...,j_n})}$.  Then:
   \begin{enumerate}
 \item Any solution of  $d_{j_1,...,j_n}$-diagonal full problem $\bold{MP}(\mathbf{s})$ admits the representation
\begin{equation}\label{20.4.289}
	F_{d_{j_1,...,j_n}}(z_1,...z_n)=\cfrac{1}{\prod\limits_{i=1}^n z_i^{j_i}}\cdot\underset{i=1}{\overset{\infty}{\mathbf{K}}}\left(\cfrac{1}{-m_i^{(d_{j_1,...,j_n})}(z_1,..., z_n)\prod\limits_{j=1}^nz_j+\cfrac{1}{l_i^{(d_{j_1,...,j_n})}}}\right),
	\end{equation}
where the atoms $\left(m_i^{(d_{j_1,...,j_n})}, l_i^{(d_{j_1,...,j_n})}\right)$ are defined by~\eqref{eq4.9:}--\eqref{20.4.12}.

 \item  $d_{j_1,...,j_n}$-diagonal full problem  $\bold{MP}(\mathbf{s})$  is indeterminate if and only if 
\begin{equation}\label{20.4.29}
	\sum_{i=1}^\infty m_i^{(d_{j_1,...,j_n})}(0,...,0)<\infty\quad\mbox{and}\quad \sum_{i=1}^\infty l_i^{(d_{j_1,...,j_n})}<\infty.
\end{equation}

\end{enumerate}

Furthermore, if \eqref{20.4.29} holds, then the sequence of resolvents matrices $W_{2N,d_{j_1,...,j_n}}$ converges to the  matrix valued function 
\begin{equation}\label{20.4.19x_1x2}\begin{split}
W^{+}_{\infty, d_{j_1,...,j_n}}(z_1,...,z_n)&=A_{d_{j_1,...,j_n}}(z_1,...,z_n)\times\\&\times\begin{pmatrix}
w^+_{11,d_{j_1,...,j_n}}(z_1,...,z_n)& w^+_{12,d_{j_1,...,j_n}}(z_1,...,z_n)\\
w^+_{21,d_{j_1,...,j_n}}(z_1,...,z_n)&w^+_{22,d_{j_1,...,j_n}}(z_1,...,z_n)
\end{pmatrix},\end{split}
\end{equation}
where the matrix $A_{d_{j_1,...,j_n}}$ is defined by \eqref{20.3.prp4.5.3}
 and $F_{d_{j_1,...,j_n}}$  can be represented as follows
\begin{equation}\label{20.4.19x_1}
	F_{d_{j_1,...,j_n}}(z_1,...z_n)\!=\!\!\cfrac{w^+_{11,d_{j_1,...,j_n}}(z_1,...,z_n)\tau(z_1,...,z_n)+w^+_{12,d_{j_1,...,j_n}}(z_1,...,z_n)}{\prod\limits_{i=1}^n \!\!z_i^{j_i}\!\!\left(\!w^+_{21,d_{j_1,...,j_n}}(z_1\!,\!...\!,\!z_n)\tau(z_1\!,\!...\!,\!z_n)\!+\!w^+_{22,d_{j_1,...,j_n}}(z_1\!,\!...\!,\!z_n)\!\right)},
	\end{equation}
where the parameter $\tau(z_1, ..., z_n)=o\left(1\right)$.
\end{theorem}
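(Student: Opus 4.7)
The plan is to reduce to the diagonal case $d_0$ already handled in Theorem~\ref{20.th3.6}, by exploiting the factorization \eqref{20p.4.3} that separates the prefactor $\prod_{i=1}^n z_i^{j_i}$ from an asymptotic series in $z=\prod_{i=1}^n z_i$. Namely, under the hypotheses, any solution of $\bold{MP}(\mathbf{s})$ has the shape
\[
F_{d_{j_1,...,j_n}}(z_1,...,z_n)=\cfrac{1}{\prod\limits_{i=1}^n z_i^{j_i}}\,\tilde F_{d_{j_1,...,j_n}}(z_1,...,z_n),
\]
where $\tilde F_{d_{j_1,...,j_n}}$ is the associated function of the $d_0$-type full problem whose moment sequence is the $d_{j_1,...,j_n}$-associated sequence $\mathfrak s^{(d_{j_1,...,j_n})}$. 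This is exactly the construction used in the proofs of Theorem~\ref{20p.prop4.7} and Theorem~\ref{20p.prop4.8}, but now with $\ell=\infty$.

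First I would apply Theorem~\ref{20.th3.6} to $\tilde F_{d_{j_1,...,j_n}}$, since by assumption $\mathfrak s^{(d_{j_1,...,j_n})}$ is regular. This yields the continued fraction representation
\[
\tilde F_{d_{j_1,...,j_n}}(z_1,...z_n)=\underset{i=1}{\overset{\infty}{\mathbf{K}}}\left(\cfrac{1}{-m_i^{(d_{j_1,...,j_n})}(z_1,..., z_n)\prod\limits_{j=1}^n z_j+\cfrac{1}{l_i^{(d_{j_1,...,j_n})}}}\right),
\]
with atoms given by the formulas \eqref{eq4.9:}--\eqref{20.4.12} (the same formulas used in the truncated case). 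Multiplying through by $1/\prod z_i^{j_i}$ yields \eqref{20.4.289}. The indeterminacy criterion \eqref{20.4.29} transfers verbatim because indeterminacy of $\bold{MP}(\mathbf{s})$ is equivalent to indeterminacy of the one-dimensional Stieltjes problem for $\mathfrak s^{(d_{j_1,...,j_n})}$: the prefactor $1/\prod z_i^{j_i}$ is a nonzero scalar function and does not affect the dimension of the solution set, so the equivalence follows from \cite[Theorem 5.3]{DK17} exactly as in Theorem~\ref{20.th3.6}.

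For the convergence and the final representation \eqref{20.4.19x_1x2}--\eqref{20.4.19x_1}, I would invoke \cite[Theorem 4.2]{DK20} applied to $\tilde F_{d_{j_1,...,j_n}}$: this gives the convergence of the reduced resolvent matrices $\tilde W_{2N,d_{j_1,...,j_n}}$ to a matrix-valued function $\tilde W^{+}_{\infty, d_{j_1,...,j_n}}=\left(w^+_{ij,d_{j_1,...,j_n}}\right)_{i,j=1}^2$, together with the linear-fractional representation of $\tilde F_{d_{j_1,...,j_n}}$ via a parameter $\tau$ with $\tau(z_1,...,z_n)=o(1)$. Then, by the relation $W_{2N,d_{j_1,...,j_n}}=A_{d_{j_1,...,j_n}}\tilde W_{2N,d_{j_1,...,j_n}}$ established in the truncated proofs (cf.\ \eqref{Ax1} and \eqref{20.3.prp4.8.2}), passing to the limit gives \eqref{20.4.19x_1x2}, and dividing the linear-fractional representation of $\tilde F_{d_{j_1,...,j_n}}$ by $\prod z_i^{j_i}$ yields \eqref{20.4.19x_1}.

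The main obstacle I anticipate is the bookkeeping around applying \cite[Theorem 4.2]{DK20} to the multidimensional setting: strictly speaking, the cited theorem is a one-dimensional statement, and one must justify that the substitution $z=\prod z_i$ and the left multiplication by the (nontrivial on the second row) matrix $A_{d_{j_1,...,j_n}}$ preserves both the convergence and the linear-fractional form. Once one observes that the $P^+$ and $Q^+$ Stieltjes polynomials are polynomials in $\prod z_i$ with the same recursion as in the one-dimensional case, and that $A_{d_{j_1,...,j_n}}$ only scales the bottom row uniformly in $N$, the convergence in the multidimensional setting follows pointwise on the complement of the zero set $\{\prod z_i=0\}$, and the concluding identities are immediate.
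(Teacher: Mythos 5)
Your proposal follows essentially the same route as the paper's own proof: factor $F_{d_{j_1,...,j_n}}=\frac{1}{\prod_i z_i^{j_i}}\tilde F_{d_{j_1,...,j_n}}$, apply Theorem~\ref{20.th3.6} (equivalently the results of \cite{DK17,DK20} underlying it) to the reduced function $\tilde F_{d_{j_1,...,j_n}}$ in the variable $z=\prod_i z_i$, and then transfer the continued fraction, the indeterminacy criterion, and the limit of the resolvent matrices back via the prefactor and the relation $W_{2N,d_{j_1,...,j_n}}=A_{d_{j_1,...,j_n}}\tilde W_{2N,d_{j_1,...,j_n}}$. Your closing remark about justifying the one-dimensional citation under the substitution $z=\prod_i z_i$ is a point the paper passes over silently, so the argument is correct and matches the paper's.
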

%%%%%%%%%%%%%%%%%%%%%%%%%%%%%%%%%%%%%%%%%%%%%%%%%%%%%%%%%%%%%%%
\begin{proof} Let the assumption of Theorem hold. Any solution $F_{d_{j_1,...,j_n}}$ of  $d_{j_1,...,j_n}$-diagonal full problem $\bold{MP}(\mathbf{s})$ admits asymptotic expansion \eqref{Fx1} and   $F_{d_{j_1,...,j_n}}$  can be represented by
\begin{equation}\label{Ox1}
	F_{d_{j_1,...,j_n}}(z_1,...,z_n)=\cfrac{1}{\prod\limits_{i=1}^n \!z_i^{j_i}}\tilde{F}_{d_{j_1,...,j_n}}(z_1,...,z_n),
\end{equation}
where $\tilde{F}_{d_{j_1,...,j_n}}$  are defined by \eqref{Fx2} and by Theorem~\ref{20.th3.6} 
\begin{equation}\label{Ox2}
 \tilde{F}_{d_{j_1,...,j_n}}\!\!(\!z_1\!,...,\!z_n\!)\!=\!-\!\!\sum\limits_{k=1}^{\infty}\!\!\cfrac{\left(\!\mathfrak{s}^{(d_{j_1\!,\!...\!,\!j_n})}_{0}\!\right)^k}{\prod\limits_{i=1}^n \!\!z^k_i}\underset{i=1}{\overset{\infty}{\mathbf{K}}}\!\!\left(\!\!\cfrac{1}{-m_i^{(d_{j_1,...,j_n})}\!\!(z_1,..., z_n)\!\!\!\prod\limits_{j=1}^n\!\!z_j+\cfrac{1}{l_i^{(d_{j_1,...,j_n})}}}\!\!\right)\!\!,
\end{equation}
where the atoms $\left(m_i^{(d_{j_1,...,j_n})}, l_i^{(d_{j_1,...,j_n})}\right)$ are defined by~\eqref{eq4.9:}--\eqref{20.4.12} and \eqref{20.4.29} holds. Moreover,  the sequences of resolvent matrices $\tilde{W}_{2N,d_{j_1,...,j_n}}$ converges to 
\begin{equation}\label{Ox3}
	\tilde{W}^{+}_{\infty, d_{j_1,...,j_n}}(z_1,...,z_n)=\times\begin{pmatrix}
w^+_{11,d_{j_1,...,j_n}}(z_1,...,z_n)& w^+_{12,d_{j_1,...,j_n}}(z_1,...,z_n)\\
w^+_{21,d_{j_1,...,j_n}}(z_1,...,z_n)&w^+_{22,d_{j_1,...,j_n}}(z_1,...,z_n)
\end{pmatrix}
\end{equation}
and $\tilde{F}_{d_{j_1,...,j_n}}$  can be represented as
\begin{equation}\label{Ox4}
	\tilde{F}_{d_{j_1,...,j_n}}(z_1,...z_n)\!=\!\!\cfrac{w^+_{11,d_{j_1,...,j_n}}(z_1,...,z_n)\tau(z_1,...,z_n)+w^+_{12,d_{j_1,...,j_n}}(z_1,...,z_n)}{w^+_{21,d_{j_1,...,j_n}}(z_1,...,z_n)\tau(z_1,...,z_n)+w^+_{22,d_{j_1,...,j_n}}(z_1,...,z_n)},
	\end{equation}
where the parameter $\tau(z_1, ..., z_n)=o\left(1\right)$.

According to \eqref{Ox1} and  \eqref{Ox2}, we  obtain  \eqref{20.4.289}--\eqref{20.4.29}. Due to  $${W}_{2N,d_{j_1,...,j_n}}=  A_{d_{j_1,...,j_n}}\tilde{W}_{2N,d_{j_1,...,j_n}},$$ where $A_{d_{j_1,...,j_n}}$ is defined by~\eqref{Ax1},  and  \eqref{Ox3}-- \eqref{Ox4}, we get \eqref{20.4.19x_1x2}--\eqref{20.4.19x_1}.  This completes the proof.~\end{proof}

%%%%%%%%%%%%%%%%%%%%%%
\section{Full problem in general case}
In the current section, we study the full problem. The results  are based on the Sections 2 and 3, i.e. we apply the diagonal approach to the complete problem.

\begin{proposition}\label{20pxprop5.2} Let  $\mathbf{s}=\{s_{i_1,i_2,...,i_n}\}_{i_1,...,i_n=0}^{\infty}$ be a sequence of real numbers 
and let  all $d_{j_1,...,j_n}$-associated sequences $\mathfrak{s}^{(d_{j_1,...,j_n})}=\left\{\mathfrak{s}^{(d_{j_1,...,j_n})}_{i}\right\}_{i=0}^{\infty}$ be regular and let  all $l_j^{(d_{j_1,...,j_n})}>0$.
Then:
   \begin{enumerate}
 \item Any solution of the  full problem $\bold{MP}(\mathbf{s})$ admits the representation
\begin{equation}\label{20.5.eert_1}\begin{split}
	F(z_1,...z_n)&=\sum^{\infty}_{\begin{matrix} 
 j_i=0,  \\
\mbox{without }j_1\\
   \end{matrix} } \!\!\!\!\! \!\!\!\!\!F_{d_{0,j_2,...,j_n}}(z_1,...z_n)+\sum^{\infty}_{\begin{matrix} 
 j_1=1,j_i=0  \\
\mbox{without }j_2\\
   \end{matrix} }\!\!\!\!\! \!\!\!\!\!\!\!\!F_{d_{j_1,0,j_3,...,j_n}}(z_1,...z_n)+\ldots+\\&+
   \sum^{\infty}_{\begin{matrix} 
 j_i=1  \\
\mbox{without }j_n\\
   \end{matrix} } \!\!\!\!\! \!\!\!\!\!F_{d_{j_1,,...,j_{n-1},0}}(z_1,...z_n),
		\end{split}\end{equation}
	
where the atoms $\left(a_i^{(d_{j_1,...,j_n})}, b_i^{(d_{j_1,...,j_n})}\right)$ and $F_{d_{j_1,j_2,...,j_n}}$ are defined by~\eqref{20.4.6} and \eqref{20.4.eert_1}, respectively.

 \item The  full problem  $\bold{MP}(\mathbf{s})$  is indeterminate if and only if  all $d_{j_1,...,j_n}$-diagonal full problems  $\bold{MP}(\mathbf{s})$  are  indeterminate, i.e. the following holds
 \begin{equation}\label{20.4.eert_2}
 	\sum_{i=0}^{\infty}\!\!\cfrac{\left|P_{i}^{(d_{j_1,...,j_n})}(0,...,0)\right|^2}{\prod\limits_{j=0}^i b_i^{(d_{j_1,...,j_n})}}<\infty\quad\mbox{and}\quad 
	\sum_{i=0}^{\infty}\!\!\cfrac{\left|Q_{i}^{(d_{j_1,...,j_n})}(0,...,0)\right|^2}{\prod\limits_{j=0}^i b_i^{(d_{j_1,...,j_n})}}<\infty.
 \end{equation}
\end{enumerate}

\end{proposition}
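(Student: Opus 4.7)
The plan is to reduce the full multidimensional problem to the countable family of subdiagonal problems already handled in Section~3 (Theorem~\ref{20.th4.11} and Proposition~\ref{20pxprop4.2}), by using the fact that every multi-index sits on exactly one subdiagonal. Starting from the asymptotic expansion of $F$ (cf.\ \eqref{20.2.3wq} with $\ell=\infty$), I observe that every $\alpha=(\alpha_1,\dots,\alpha_n)\in\dZ_+^n$ decomposes uniquely as $\alpha_i=j_i+j$ with $j=\min_k\alpha_k$ and $(j_1,\dots,j_n)$ satisfying $\min_k j_k=0$. Grouping the terms of the asymptotic series by the tuple $(j_1,\dots,j_n)$ then yields, for each such tuple, precisely the expansion \eqref{20p.4.2} of $F_{d_{j_1,\dots,j_n}}$ with coefficients \eqref{20p.4.1}. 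Hence $F$ decomposes (formally) as $F=\sum_{(j_1,\dots,j_n)} F_{d_{j_1,\dots,j_n}}$, the sum ranging over admissible tuples (those with at least one vanishing coordinate).

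Next I would reorder this sum by the position of the first zero coordinate in $(j_1,\dots,j_n)$: the block $j_1=0$, $j_2,\dots,j_n\geq 0$; then $j_1\geq 1$, $j_2=0$, $j_3,\dots,j_n\geq 0$; and so on down to $j_1,\dots,j_{n-1}\geq 1$, $j_n=0$. This matches the decomposition \eqref{20.5.eert_1} exactly. Applying Theorem~\ref{20.th4.11} (whose hypotheses are satisfied for each subdiagonal by the standing regularity and positivity assumptions) to each summand then supplies the continued fraction \eqref{20.4.289}, equivalently the representation \eqref{20.4.eert_1}, completing part~(1).

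For part~(2), the key observation is that the different subdiagonals live on disjoint families of monomials in $z_1,\dots,z_n$, so the moment data $\mathbf{s}$ split into a collection of independent $d_{j_1,\dots,j_n}$-associated sequences; a solution $F$ of $\bold{MP}(\mathbf{s})$ is determined by choosing, independently for each admissible tuple, a solution of the corresponding $d_{j_1,\dots,j_n}$-diagonal full problem. Consequently indeterminacy of the full problem is equivalent to indeterminacy of each $d_{j_1,\dots,j_n}$-diagonal subproblem, and by Proposition~\ref{20pxprop4.2} this in turn translates into the two convergence conditions in \eqref{20.4.eert_2} for every $(j_1,\dots,j_n)$.

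The main obstacle I anticipate is the rigorous justification of the rearrangement of the doubly-indexed asymptotic series: one has to verify that regrouping multi-indices by subdiagonal is compatible with the nontangential asymptotics $z_i\widehat{\rightarrow}\infty$ required in the formulation of $\bold{MP}(\mathbf{s})$, and that the ambient sum $\sum F_{d_{j_1,\dots,j_n}}$ is meaningful in the sense needed to apply the per-diagonal results. The regularity hypothesis on every $d_{j_1,\dots,j_n}$-associated sequence is exactly what permits Theorem~\ref{20.th4.11} to be invoked term-by-term, while the positivity $l_j^{(d_{j_1,\dots,j_n})}>0$ produces genuine Stieltjes-type continued fractions whose convergence is governed by \eqref{20.4.eert_2}; together they close the argument.
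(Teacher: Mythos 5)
Your argument follows essentially the same route as the paper: split the moment sequence into its $d_{j_1,\dots,j_n}$-associated subdiagonal sequences (each multi-index lying on exactly one subdiagonal), apply the per-diagonal results of Section~3 (Proposition~\ref{20pxprop4.2}/Theorem~\ref{20.th4.11}) to each block to obtain \eqref{20.5.eert_1}, and deduce indeterminacy of the full problem from indeterminacy of every subdiagonal problem. Your additional remarks on the unique decomposition $\alpha_i=j_i+j$ with $\min_k j_k=0$ and on the formal rearrangement of the asymptotic series only make explicit what the paper's proof leaves implicit, so the proposal is correct and matches the paper's proof.
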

%%%%%%%%%%%%%%%%%%%%%%%%%%%%%%%%%%%%%%%%%%%%%%%%%%%%%%%%%%%%%%%%%%%
\begin{proof} Let the assumptions of statement hold. The sequence $\mathbf{s}=\{s_{i_1,i_2,...,i_n}\}_{i_1,...,i_n=0}^{\infty}$  is associated with the sequence $ \mathfrak{s}$, which can be represented as the union  of $d_{j_1,...,j_n}$-associated sequences $\mathfrak{s}^{(d_{j_1,...,j_n})}$
 \begin{equation}\label{20.4.eert_2cx}
	 \mathfrak{s}=\bigcup_{j_1,...,j_n=0}^{\infty} \mathfrak{s}^{(d_{j_1,...,j_n})}.
 \end{equation}
By Proposition~\ref{20pxprop3.2} and Proposition~\ref{20pxprop4.2}, 
all associated functions  $F_{d_{j_1,...,j_n}}$ can be represented by \eqref{20.4.eert_1} and  any solution of the  full problem $\bold{MP}(\mathbf{s})$ takes the form \eqref{20.5.eert_1}.

Moreover, the  full problem $\bold{MP}(\mathbf{s})$ is the union of all  $d_{j_1,...,j_n}$-diagonal  problem $\bold{MP}(\mathbf{s})$. Hence, the  full problem $\bold{MP}(\mathbf{s})$  is indeterminate iff  all $d_{j_1,...,j_n}$-diagonal full problems  $\bold{MP}(\mathbf{s})$  are  indeterminate,  i.e. \eqref{20.4.eert_2} holds.  This completes the proof.~\end{proof}

%%%%%%%%%%%%%%%%%%%%%%%%%%%%%%%%%%%%%%%%%%%%%%%%%%%%%%%%%%%%%%%%%%%
\begin{theorem}\label{20.th5.11} Let  $\mathbf{s}=\{s_{i_1,i_2,...,i_n}\}_{i_1,...,i_n=0}^{\infty}$ be the sequence of real numbers 
and let the all $d_{j_1,...,j_n}$-associated sequences $\mathfrak{s}^{(d_{j_1,...,j_n})}=\left\{\mathfrak{s}^{(d_{j_1,...,j_n})}_{i}\right\}_{i=0}^{\infty}$ be regular. Then:
   \begin{enumerate}
 \item Any solution of the  full problem $\bold{MP}(\mathbf{s})$ admits the following  representation
\begin{equation}\label{20.5.289s}\begin{split}
	F(z_1,...z_n)&=\sum^{\infty}_{\begin{matrix} 
 j_i=0,  \\
\mbox{without }j_1\\
   \end{matrix} }\!\!\!\!\! \!\!\!\!\! F_{d_{0,j_2,...,j_n}}(z_1,...z_n)+\sum^{\infty}_{\begin{matrix} 
 j_1=1,j_i=0  \\
\mbox{without }j_2\\
   \end{matrix} } \!\!\!\!\! \!\!\!\!\!F_{d_{j_1,0,j_3,...,j_n}}(z_1,...z_n)+\ldots+\\&+
   \sum^{\infty}_{\begin{matrix} 
 j_i=1  \\
\mbox{without }j_n\\
   \end{matrix} }\!\!\!\!\! \!\!\!\!\! F_{d_{j_1,,...,j_{n-1},0}}(z_1,...z_n),
	\end{split}\end{equation}
where  the atoms $\left(m_i^{(d_{j_1,...,j_n})}, l_i^{(d_{j_1,...,j_n})}\right)$  and  $F_{d_{j_1,...,j_n}}$  are defined by~\eqref{eq4.9:}--\eqref{20.4.12} and \eqref{20.4.289}, respectively .

 \item The  full problem  $\bold{MP}(\mathbf{s})$  is indeterminate if and only if  all $d_{j_1,...,j_n}$-diagonal full problems  $\bold{MP}(\mathbf{s})$  are  indeterminate , i.e. the following holds
\begin{equation}\label{20.5.29}
	\sum_{i=1}^\infty m_i^{(d_{j_1,...,j_n})}(0,...,0)<\infty\quad\mbox{and}\quad \sum_{i=1}^\infty l_i^{(d_{j_1,...,j_n})}<\infty.
\end{equation}

\end{enumerate}

%Furthermore, if \eqref{20.5.29} holds, then the sequence of resolvents matrices $W_{2n,d_{j_1,...,j_n}}$ converges to the entire matrix valued function 
%\begin{equation}\label{20.5.19x_1x2}
%W^{+}_{\infty, d_{j_1,...,j_n}}(z_1,...,z_n)=A_{d_{j_1,...,j_n}}(z_1,...,z_n)\begin{pmatrix}
%w^+_{11,d_{j_1,...,j_n}}(z_1,...,z_n)& w^+_{12,d_{j_1,...,j_n}}(z_1,...,z_n)\\
%w^+_{21,d_{j_1,...,j_n}}(z_1,...,z_n)&w^+_{22,d_{j_1,...,j_n}}(z_1,...,z_n)
%\end{pmatrix}
%\end{equation}
% and $F_{d_{j_1,...,j_n}}$  can be represented as follows
%\begin{equation}\label{20.5.19x_1}
%	F_{d_{j_1,...,j_n}}(z_1,...z_n)=\cfrac{1}{\prod\limits_{i=1}^n z_i^{j_i}}\cfrac{w^+_{11,d_{j_1,...,j_n}}(z_1,...,z_n)\tau(z_1,...,z_n)+w^+_{12,d_{j_1,...,j_n}}(z_1,...,z_n)}{w^+_{21,d_{j_1,...,j_n}}(z_1,...,z_n)\tau(z_1,...,z_n)+w^+_{22,d_{j_1,...,j_n}}(z_1,...,z_n)},
%	\end{equation}
%where the parameter $\tau(z_1, ..., z_n)=o\left(1\right)$.
\end{theorem}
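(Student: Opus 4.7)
The plan is to reduce the full problem to the diagonal case already handled in Sections 2 and 3 by partitioning the moment array $\mathbf{s}$ into its diagonal subsequences. Every multi-index $(i_1,\ldots,i_n)\in\mathbb{Z}_+^n$ admits a unique decomposition $i_l=j_l+k$ with $k=\min_l i_l$ and with $(j_1,\ldots,j_n)$ having at least one zero entry. Consequently the whole sequence $\mathfrak{s}$ decomposes as the disjoint union $\mathfrak{s}=\bigcup \mathfrak{s}^{(d_{j_1,\ldots,j_n})}$, indexed by tuples $(j_1,\ldots,j_n)\in\mathbb{Z}_+^n$ with $\min_l j_l=0$; the lexicographic enumeration appearing in \eqref{20.5.289s} (first the tuples with $j_1=0$, then those with $j_1\geq 1$ and $j_2=0$, and so on) writes this partition out explicitly with no double-counting.

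First I would substitute this index decomposition into the asymptotic expansion \eqref{20.2.3wq} for $F$ and regroup the terms diagonal by diagonal. For each fixed $(j_1,\ldots,j_n)$ with $\min_l j_l=0$ the subsum ranging over $k\geq 0$ reproduces exactly the $d_{j_1,\ldots,j_n}$-diagonal expansion \eqref{20p.4.2} of the associated function $F_{d_{j_1,\ldots,j_n}}$. Hence any solution $F$ of the full problem $\bold{MP}(\mathbf{s})$ admits the additive decomposition $F=\sum F_{d_{j_1,\ldots,j_n}}$, the sum running over the diagonals above, and conversely any choice of diagonal solutions $F_{d_{j_1,\ldots,j_n}}$ assembles into a solution of $\bold{MP}(\mathbf{s})$.

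Next I would apply Theorem~\ref{20.th4.11} to each summand. Since by hypothesis every $\mathfrak{s}^{(d_{j_1,\ldots,j_n})}$ is regular, part (1) of that theorem yields the representation \eqref{20.4.289} for each $F_{d_{j_1,\ldots,j_n}}$, with atoms $(m_i^{(d_{j_1,\ldots,j_n})},l_i^{(d_{j_1,\ldots,j_n})})$ computed from \eqref{eq4.9:}--\eqref{20.4.12}. Inserting these representations into the additive decomposition produces \eqref{20.5.289s}, which proves part (1). For part (2), the additive decomposition separates the parameters of $F$ into independent contributions from each diagonal, so freedom in any single diagonal lifts to freedom in the full problem and, conversely, the full problem is indeterminate only when every diagonal contributes a free parameter; combined with part (2) of Theorem~\ref{20.th4.11}, this is equivalent to \eqref{20.5.29} holding for every admissible $(j_1,\ldots,j_n)$.

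The main obstacle I expect is the justification of the additive decomposition of $F$ into diagonal summands beyond the purely formal level: one must argue that the (possibly infinite) sum in \eqref{20.5.289s} converges nontangentially as $z_i\,\widehat{\rightarrow}\,\infty$ and uniquely reproduces $F$ within the Stieltjes class, and that the parameter choices $\tau_{(j_1,\ldots,j_n)}$ on different diagonals are genuinely independent rather than coupled through the Stieltjes-transform structure. This convergence/uniqueness step is what makes the reduction to the previous sections legitimate; once it is in place, the rest is a direct concatenation of Propositions~\ref{20pxprop3.2}, \ref{20pxprop4.2} and Theorems~\ref{20.th3.6}, \ref{20.th4.11}.
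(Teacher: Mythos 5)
Your proposal follows essentially the same route as the paper: decompose the moment array into the disjoint union of its $d_{j_1,\ldots,j_n}$-diagonal subsequences (the paper's $\mathfrak{s}=\bigcup_{j_1,\ldots,j_n}\mathfrak{s}^{(d_{j_1,\ldots,j_n})}$), regroup the expansion \eqref{20.2.3wq} diagonal by diagonal so that $F$ is the sum of the associated functions $F_{d_{j_1,\ldots,j_n}}$, apply Theorem~\ref{20.th4.11} to each summand, and transfer indeterminacy diagonalwise. The convergence/independence issue you flag as the main obstacle is likewise left at the formal level in the paper's own proof, so your argument is, if anything, slightly more explicit than the original.
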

\begin{proof} Let the assumptions of Theorem hold. The sequence $\mathbf{s}=\{s_{i_1,i_2,...,i_n}\}_{i_1,...,i_n=0}^{\infty}$ is associated with the sequence $ \mathfrak{s}$, which is defined by  \eqref{20.4.eert_2cx}. Therefore,  the  full problem $\bold{MP}(\mathbf{s})$ is the union of all  $d_{j_1,...,j_n}$-diagonal  problem $\bold{MP}(\mathbf{s})$ and the solution of the full problem $\bold{MP}(\mathbf{s})$  can be represented by \eqref{20.5.289s} and the full problem  $\bold{MP}(\mathbf{s})$  is indeterminate iff  all $d_{j_1,...,j_n}$-diagonal full problems  $\bold{MP}(\mathbf{s})$  are  indeterminate, i.e. \eqref{20.5.29} holds. This completes the proof.~\end{proof}

%%%%%%%%%%%%%%%%%%%%%%%%%%%%%%%%%%%%%%%%%%%%%%%%%%%%%%%%%%%%%%%

\end{document}